\documentclass[11pt]{amsart}
\usepackage[utf8]{inputenc}
\usepackage{mlmodern}
\usepackage[english]{babel}
\usepackage{caption}
\usepackage{subcaption}
\usepackage{tikz}
\usepackage{pgfplots}
\pgfplotsset{compat=1.18}
\usepackage[dvipsnames]{xcolor}
\usetikzlibrary{angles,quotes}
\usetikzlibrary{arrows.meta}
\usepackage{graphicx,amssymb,latexsym,amsmath,amscd,amsthm}
\usepackage{verbatim, color}
\usepackage{mathtools}
\usepackage[top=1in, bottom=1in, left=1in, right=1in]{geometry}
\usepackage[noadjust]{cite}
\usepackage[foot]{amsaddr}

\newtheorem{lemma}{Lemma}
\newtheorem{prop}{Proposition}[section]
\newtheorem{thm}{Theorem}[section]
\newtheorem{cor}{Corollary}[section]
\theoremstyle{definition}
\newtheorem{Def}{Definition}[section]

\theoremstyle{remark}
\newtheorem{rem}{Remark}[section]

\definecolor{CitePurple}{RGB}{128,9,158}
\definecolor{CiteBlue}{RGB}{2,95,176}
\definecolor{LinkRed}{rgb}{0.7,0,0}
\usepackage[colorlinks=true,
            citecolor=CitePurple,
            linkcolor=LinkRed,
            urlcolor=CiteBlue]{hyperref}

\usepackage{tikz-cd}
\begin{document}
\title[Gromov-Hausdorff versus Hausdorff distances for Euclidean submanifolds]{Gromov-Hausdorff versus Hausdorff distances for Euclidean submanifolds with positive reach}
\author{Prasanna Brahme$^{1}$}
\email{pnbrahme@uh.edu}
\address{$^{1}$Department of Mathematics, University of Houston, Houston, Texas, USA}
\author{William Ott$^{1}$}
\email{william.ott.math@gmail.com}

\begin{abstract}
This paper is about comparing the Gromov-Hausdorff distance between a pair of sets to the Hausdorff distance between the pair.
Recent work on this problem has focused on two settings, Riemannian manifolds and metric graphs.
Here, we focus on embedded submanifolds of Euclidean space endowed with the Euclidean metric.
We prove that when $\mathcal{M}\subset\mathbb{R}^n$ is a compact connected embedded submanifold without boundary, with positive reach, and $\mathrm{A}\subset\mathcal{M}$ is a subset that is sufficiently dense in $\mathcal{M}$ with respect to the reach, the Gromov-Hausdorff distance between $\mathrm{A}$ and $\mathcal{M}$ is bounded below by $1/2$ of the Hausdorff distance between them.
We extend this result in two ways.
First, we show that the factor $1/2$ can be improved using Jung's theorem.
Second, we prove bounds on the Gromov-Hausdorff distance for pairs of subsets of $\mathcal{M}$.
When $\mathbb{S}^{1}$ is equipped with the two-dimensional Euclidean metric, the Gromov-Hausdorff distance between $\mathbb{S}^{1}$ and any sufficiently dense subset of $\mathbb{S}^{1}$ can be expressed explicitly as a function of the Hausdorff distance between the two.
We use interleavings between geometric simplicial complexes, nerve lemmas, properties of reach and the nearest-point projection map, and the fundamental class to produce the inequalities for general $\mathcal{M}$.
We use the Borsuk-Ulam theorem for $\mathcal{M}=\mathbb{S}^1\subset \mathbb{R}^2$.
\end{abstract}
\maketitle
\section{Introduction}\label{sec:Intro}
The Gromov–Hausdorff distance was introduced independently by Edwards \cite{EDWARDS1975121} as a metric on the set of
isometry classes of metric spaces, and by Gromov \cite{Gromov1981-mn} to prove results on groups of polynomial growth.
It metrizes the set of isometry classes of compact metric spaces and the set of finite metric spaces is everywhere
dense in this metric space.
The Gromov-Hausdorff distance has since been used in a variety of computational applications, such as comparing
point clouds sampled from shapes \cite{Memoli2012-ra}.
This distance is preferable to the Hausdorff distance for non-rigid shape matching problems, where the
Hausdorff distance can miss similarities of interest \cite{Schmiedl2017}.
On the other hand, computing the Gromov-Hausdorff distance between
finite metric spaces is known to lead to NP-hard problems \cite{Schmiedl2017}.
Consequently, significant recent work has focused on computing Gromov-Hausdorff distances for specific types of metric spaces and finding theoretical lower bounds on Gromov-Hausdorff distances \cite{adams2025hausdorffvsgromovhausdorffdistances,https://doi.org/10.48550/arxiv.2301.00246,Agarwal2018,adams2025lowerboundinggromovhausdorffdistance,4563074}.

In \cite{adams2025hausdorffvsgromovhausdorffdistances,adams2025lowerboundinggromovhausdorffdistance}, the authors find lower bounds on Gromov-Hausdorff distances in terms of Hausdorff distances in the contexts of compact Riemannian manifolds \cite{adams2025hausdorffvsgromovhausdorffdistances} and metric graphs \cite{adams2025lowerboundinggromovhausdorffdistance}.
In \cite[Section 9 Question 3]{adams2025hausdorffvsgromovhausdorffdistances}, the authors ask if bounds of this type hold for manifolds embedded in $\mathbb{R}^n$ when equipped with the ambient Euclidean metric.
Here, we answer this question affirmatively for compact connected embedded submanifolds of $\mathbb{R}^{n}$ with positive reach.

\subsection{Gromov-Hausdorff distance and Hausdorff distance can be unequal}

Let ($\mathrm{X}, \mathrm{d}$) be a metric space. For any $x \in \mathrm{X}$, we let $\mathbb{B}(x ; r)=\left\{x^{\prime} \in \mathrm{X} \mid \mathrm{d}\left(x, x^{\prime}\right)<r\right\}$ denote the metric ball of radius $r$ about $x$. For any $r \geq 0$ and $\mathrm{U} \subset \mathrm{X}$, we let $\mathbb{B}(\mathrm{U} ; r)=\cup_{x \in \mathrm{U}} \mathbb{B}(x ; r)$ denote the union of metric balls of radius $r$ centered at the points of $\mathrm{U}$. The diameter of a subset $\mathrm{U} \subset \mathrm{X}$ is defined by
$
\operatorname{diam}(\mathrm{U})=\sup \left\{\mathrm{d}\left(x, x^{\prime}\right) \mid x, x^{\prime} \in \mathrm{U}\right\}.
$

If $\mathrm{X}$ and $\mathrm{Y}$ are two subspaces of a metric space $\mathrm{Z}$ then the \textbf{Hausdorff distance} between $\mathrm{X}$ and $\mathrm{Y}$ is defined as
$$
\mathrm{d}_{\mathrm{H}}(\mathrm{X}, \mathrm{Y})=\inf \{r \geq 0 \mid \mathrm{X} \subset \mathbb{B}(\mathrm{Y} ;r) \text { and } \mathrm{Y} \subset\mathbb{B}(\mathrm{X} ;r)\},
$$
where the metric balls are taken in the common metric space $\mathrm{Z}$.
The \textbf{Gromov-Hausdorff distance} $\mathrm{d}_{\mathrm{GH}}(\mathrm{X}, \mathrm{Y})$ between two metric spaces ($\mathrm{X}, \mathrm{d}_{\mathrm{X}}$) and ($\mathrm{Y}, \mathrm{d}_{\mathrm{Y}}$) is defined as
$$ \mathrm{d}_{\mathrm{GH}}(\mathrm{X},\mathrm{Y}) = \inf\limits_{\gamma,\varphi,\mathrm{Z}}\mathrm{d}_\mathrm{H}(\gamma(\mathrm{X}),\varphi(\mathrm{Y})),
$$
where the infimum is taken over all metric spaces $\mathrm{Z}$ and isometric embeddings $\gamma: \mathrm{X} \rightarrow \mathrm{Z}$ and $\varphi: \mathrm{Y} \rightarrow \mathrm{Z}$.

Figure \ref{fig:threepoint} shows a three-point subset of $\mathbb{S}^1$ for which the Gromov-Hausdorff distance is strictly smaller than the Hausdorff distance. 

\begin{figure}[htb]
\centering
\begin{subfigure}{0.48\textwidth}
\centering
\begin{tikzpicture}[scale=1.1]
\fill[blue!45,opacity=0.30] (0.8660254,0.5) circle (1.7320508);
\fill[orange!55,opacity=0.35] (-0.8660254,0.5) circle (1.7320508);
\draw[black,thick] (0,0) circle (1);
\draw[dashed] (0,1)--(0.8660254,0.5) node[midway,sloped,below,inner sep=1.5pt,font=\scriptsize] (){$1$};
\draw[dashed] (0,1)--(-0.8660254,0.5) node[midway,sloped,below,inner sep=1.5pt,font=\scriptsize] (){$1$};
\fill[red] (0.8660254,0.5) circle (1.2pt);
\draw[red] (0.8660254,0.5) node[anchor=south west,font=\small] (){$A$};
\fill[red] (0,1) circle (1.2pt);
\draw[red] (0,1) node[anchor=south,font=\small] (){$B$};
\fill[red] (-0.8660254,0.5) circle (1.2pt);
\draw[red] (-0.8660254,0.5) node[anchor=south east,font=\small] (){$C$};
\fill (0,-1) circle (0.8pt);
\draw (0,-1) node[anchor=north,font=\scriptsize] (){$(0,-1)$};
\end{tikzpicture}
\caption*{(a)}
\end{subfigure}\hfill
\begin{subfigure}{0.48\textwidth}
\centering
\begin{tikzpicture}[scale=1.3]
\fill[blue!45,opacity=0.30] (0,0) circle (1);
\draw[black,thick] (0,0) circle (1);
\draw[dashed] (0,0)--(0.8660254,0.5) node[midway,sloped,below,inner sep=1.5pt,font=\scriptsize] (){$1$};
\draw[dashed] (0,0)--(-0.8660254,0.5) node[midway,sloped,below,inner sep=1.5pt,font=\scriptsize] (){$1$};
\fill[red] (0.8660254,0.5) circle (1.2pt);
\draw[red] (0.8660254,0.5) node[anchor=south west,font=\small] (){$\phi(A)$};
\fill[red] (-0.8660254,0.5) circle (1.2pt);
\draw[red] (-0.8660254,0.5) node[anchor=south east,font=\small] (){$\phi(C)$};
\fill[red] (0,0) circle (1.2pt);
\draw[red] (0,0) node[anchor=north,font=\small] (){$\phi(B)$};
\end{tikzpicture}
\caption*{(b)}
\end{subfigure}
\caption{(a) $\mathrm{S}=\{A,B,C\}\subset\mathbb{S}^1$ (Euclidean metric); the balls of radius $\sqrt3$ at $A,C$ reach $(0,-1)$, the farthest circle point, so $\operatorname{d}_\mathrm{H}(\mathrm{S},\mathbb{S}^1)=\sqrt3$. (b) The isometric copy $\phi(\mathrm{S})$ with $\phi(B)$ at the center gives $\operatorname{d}_\mathrm{H}(\phi(\mathrm{S}),\mathbb{S}^1)=1$; here $\phi$ fixes $A$ and $C$ and preserves both edges of length $1$, so it is an isometry. As $\operatorname{d}_\mathrm{GH}$ minimizes the Hausdorff distance over isometric placements, this one placement gives $\operatorname{d}_\mathrm{GH}(\mathrm{S},\mathbb{S}^1)\leq1<\sqrt3=\operatorname{d}_\mathrm{H}$.}
\label{fig:threepoint}
\end{figure}

\subsection{Statement of results}
Throughout the article, by closed manifold we mean compact manifold without boundary.
Submanifolds of $\mathbb{R}^{n}$ are embedded submanifolds equipped with the ambient Euclidean metric (unless specifically stated otherwise).
We write $\operatorname{r}_1=\operatorname{r}_1(\mathcal{M})$ for the reach of the embedded submanifold $\mathcal{M} \subset \mathbb{R}^{n}$ and $\lambda_n=\sqrt{\frac{n}{2(n+1)}}$ for Jung's constant.

\vspace{5pt}
\noindent\textbf{Theorem \ref{thm:firstbound}.}
Let $\mathcal{M}\subset\mathbb{R}^n$ be a closed connected $d$-dimensional submanifold of reach $\operatorname{r}_1(\mathcal{M})=\operatorname{r}_1>0$.
Let $\mathrm{A}$ be a subset of $\mathcal{M}$.
We have
$$
\operatorname{d}_{\mathrm{GH}}(\mathrm{A},\mathcal{M})\geq\min\left\{\frac{1}{2}\operatorname{d}_{\mathrm{H}}(\mathrm{A},\mathcal{M}),\frac{\sqrt2}{6}\operatorname{r}_1(\mathcal{M})\right\} .
$$

This bound is proved by interleaving the ambient \v{C}ech complex of $\mathcal{M}$ with the intrinsic \v{C}ech complex of $\mathrm{A}$.
A correspondence of small distortion together with Jung's theorem produces simplicial maps carrying the fundamental class $[\mathcal{M}]$.
Once the scale is small relative to $\operatorname{r}_1$, a nerve lemma forces the top homology of the intrinsic complex to vanish, which gives the inequality.
In general, a nerve lemma establishes a homotopy equivalence between the geometric realization of a nerve (a simplicial complex) constructed from a cover and a space constructed from the union of the covering sets \cite{Alexandroff1928,Borsuk1948,MR1867354,Bauer_2023}.

For the circle we obtain a sharper bound by a Borsuk-Ulam type argument in place of the nerve lemma.
Our argument is similar to the proof of Theorem 5.1 in \cite{adams2025lowerboundinggromovhausdorffdistance}.
In the following theorem, $\psi(t)=2\sin(2\sin^{-1}(t/2))$ is the function sending the length of the chord of an arc to the length of the chord of the arc of twice the angle.

\vspace{5pt}
\noindent\textbf{Theorem \ref{theorem:circbu}.} Let $\mathbb{S}^1\subset\mathbb{R}^2$ be the unit circle equipped with the two-dimensional Euclidean metric.
Then for any subset $\mathrm{A}\subset\mathbb{S}^1$ with $\operatorname{d}_{\mathrm{H}}\left(\mathrm{A},\mathbb{S}^1\right)\leq\sqrt2$, we have
$$
\operatorname{d}_{\mathrm{GH}}\left(\mathrm{A},\mathbb{S}^1\right)\geq\min\left\{\frac{1}{2}\psi\left(\operatorname{d}_{\mathrm{H}}\left(\mathrm{A},\mathbb{S}^1\right)\right),\frac{c_0}{2}\right\} ,
$$
where $c_0\in(0,2)$ is the solution of the equation $\psi^{-1}(x)+x=2$ ($c_0\approx1.305$).
\vspace{5pt}

\noindent\textbf{Corollary \ref{cor:circeq}.}
Let $\mathrm{A}\subset\mathbb{S}^1$.
If $\psi(\operatorname{d}_{\mathrm{H}}(\mathrm{A},\mathbb{S}^1))\leq c_0$, then
    $\operatorname{d}_{\mathrm{GH}}(\mathrm{A},\mathbb{S}^1)=\frac{1}{2}\psi(\operatorname{d}_{\mathrm{H}}(\mathrm{A},\mathbb{S}^1))$ (here $c_0$ is as defined in Theorem \ref{theorem:circbu}).
\vspace{5pt}

Theorem \ref{theorem:circbu} shows that the Gromov-Hausdorff distance can be bounded below by a function of the Hausdorff distance if the subset is sufficiently dense. To prove Corollary \ref{cor:circeq}, we construct a correspondence with distortion that achieves this bound.

To compare the Gromov-Hausdorff distance between two subsets of the same submanifold with the Hausdorff distance between them, we use Vietoris-Rips complexes. This also gives us better lower bounds, as using Vietoris-Rips complexes allows us to construct tighter interleavings.

\vspace{5pt}
\noindent\textbf{Theorem \ref{theorem:main2set}.} Let $\mathcal{M}\subset\mathbb{R}^n$ be a connected closed $d$-dimensional submanifold of positive reach $\operatorname{r}_1>0$. For each $\alpha\in(0,1]$, set $c_\alpha^n=\sqrt{\frac{2}{1+\sqrt{1-\alpha^2}}}\,\lambda_n$ and $c=\min\!\left(\alpha,\tfrac{1}{1+2c_\alpha^n}\right)$. Then for all subsets $\mathrm{A},\mathrm{B}\subset\mathcal{M}$,
$$\operatorname{d}_{\mathrm{GH}}(\mathrm{A},\mathrm{B})\geq\min\left\{\tfrac{1}{2c_\alpha^n}\operatorname{d}_{\mathrm{H}}(\mathrm{A},\mathcal{M})-\operatorname{d}_{\mathrm{H}}(\mathrm{B},\mathcal{M}),\ \tfrac{c}{2}\operatorname{r}_1-\operatorname{d}_{\mathrm{H}}(\mathrm{B},\mathcal{M})\right\} .$$
\noindent As $c_\alpha^n<1$, the coefficient $\tfrac{1}{2c_\alpha^n}$ exceeds $\tfrac12$, so this improves Theorem \ref{thm:firstbound} when $\mathrm{A}$ is Hausdorff-close to $\mathcal{M}$.
\vspace{5pt}

In Section \ref{sec:ambient} we sharpen these estimates by working with ambient \v{C}ech complexes.
As ambient \v{C}ech complexes need not be homotopy equivalent to $\mathcal{M}$, we instead include an ambient \v{C}ech complex into an intrinsic one through the nearest-point projection controlled by a quantity that measures how far the projection displaces points near the manifold.
Then we combine this with a tighter Vietoris-Rips to ambient \v{C}ech interleaving using results from \cite{federer1959curvature,boissonnat2019reach} to sharpen our estimates.
These techniques produce the following theorem in particular.

\vspace{5pt}
\noindent\textbf{Theorem \ref{theorem:spherebound}.}
 Consider two subsets $\mathrm{A},\mathrm{B}\subset\mathbb{S}^{n-1}\subset\mathbb{R}^n$ with $\operatorname{d}_\mathrm{H}\left(\mathrm{A},\mathbb{S}^{n-1}\right)\leq\sqrt2$. Let\\ $c=\min\left\{\sqrt2,\frac{1}{\sqrt2\lambda_n(2\lambda_{n-1}+1)}\right\}$. Then the following hold.
\begin{align*}
&1.) \;\operatorname{d}_{\mathrm{GH}} \left(\mathrm{A},\mathbb{S}^{n-1} \right)\geq\operatorname{min}\left\{\frac{1}{2\lambda_{n-1}}\psi_{\mathbb{S}^{n-1}}^{-1}\left(\operatorname{d}_\mathrm{H} \left(\mathrm{A},\mathbb{S}^{n-1} \right)\right),\frac{c}{2}\right\}\;\\
&2.) \;\operatorname{d}_{\mathrm{GH}} \left(\mathrm{A},\mathrm{B} \right)\geq\operatorname{min}\left\{\frac{1}{2\lambda_{n-1}}\psi_{\mathbb{S}^{n-1}}^{-1}\left(\operatorname{d}_\mathrm{H} \left(\mathrm{A},\mathbb{S}^{n-1} \right)\right)-\operatorname{d}_{\mathrm{H}} \left(\mathrm{B},\mathbb{S}^{n-1} \right),\frac{c}{2}-\operatorname{d}_{\mathrm{H}} \left(\mathrm{B},\mathbb{S}^{n-1} \right)\right\}\;
\end{align*}
 \noindent Here, $\psi_{\mathbb{S}^{n-1}}(r)=\sqrt{2-2\sqrt{1-r^2}}$.
\vspace{5pt} 

\subsection{On the differences between the Euclidean case and the Riemannian case}

There are two important differences between the previously-studied Riemannian case, and the Euclidean case we analyze in this work.
First, ambient \v{C}ech complexes constructed for subsets of Euclidean submanifolds may not have trivial top homology even when the union of balls does not cover the manifold.
Thus, the use of ambient \v{C}ech complexes and the functorial nerve lemma as in \cite{adams2025hausdorffvsgromovhausdorffdistances} does not apply directly. We work around this using results from \cite{kim2019homotopy,federer1959curvature} and the nearest-point projection map.

Second, the theorem by Hausmann (Theorem 3.5 of \cite{Hausmann+2016+175+188}) establishes a homotopy equivalence between the Vietoris-Rips complex on a compact Riemannian manifold and the underlying manifold. Such a result is not direct in the case of Euclidean submanifolds. To use Vietoris-Rips complexes in the Euclidean case, we collect results in \cite{gillespie2024vietoris,adams2019metric,adams2022vietoristhickeningscomplexesisomorphic,adams2025vietoristhickeningscomplexesmanifolds} and prove that the Vietoris-Rips complexes constructed on Euclidean submanifolds are homotopy equivalent to the underlying manifold at small enough scales.

\section{Background}\label{sec:background}
\subsection{Metric spaces and manifolds}

We begin by collecting the basic notions of metric geometry used in this article. Unless stated otherwise, $\mathbb{R}^n$ is equipped with the standard Euclidean inner product $\langle\cdot,\cdot\rangle$ and the induced norm $\lVert x\rVert=\sqrt{\langle x,x\rangle}$ and $\operatorname{d}\left(x,y\right)=\lVert x-y\rVert$ denotes the Euclidean distance between $x,y\in\mathbb{R}^n$. We write $\mathbb{B}\left(x,r\right)=\{y\in\mathbb{R}^n\mid\operatorname{d}\left(x,y\right)<r\}$ for the open Euclidean ball of radius $r>0$ centered at $x$.

\begin{Def}
    A \textbf{metric space} is a pair $(\mathrm{X},\operatorname{d}_\mathrm{X})$ consisting of a set $\mathrm{X}$ together with a function $\operatorname{d}_\mathrm{X}:\mathrm{X}\times\mathrm{X}\to\mathbb{R}^{\geq0}$, called the \textbf{metric} (or distance), satisfying for all $x,y,z\in\mathrm{X}$:
    \begin{align*}
        &\;\;\operatorname{d}_\mathrm{X}\left(x,y\right)=0\iff x=y \hspace{1cm}\\
        &\;\;\operatorname{d}_\mathrm{X}\left(x,y\right)=\operatorname{d}_\mathrm{X}\left(y,x\right)\hspace{1.55cm}\text{(symmetry),}\\
        &\;\;\operatorname{d}_\mathrm{X}\left(x,z\right)\leq\operatorname{d}_\mathrm{X}\left(x,y\right)+\operatorname{d}_\mathrm{X}\left(y,z\right)\hspace{0.4cm}\text{(triangle inequality).}
    \end{align*}
    A subset $\mathrm{A}\subset\mathrm{X}$ inherits the structure of a metric space by restricting $\operatorname{d}_\mathrm{X}$ to $\mathrm{A}\times\mathrm{A}$. We call $(\mathrm{A},\operatorname{d}_\mathrm{X}|_{\mathrm{A}\times\mathrm{A}})$ a \textbf{metric subspace} of $\mathrm{X}$. The \textbf{diameter} of $\mathrm{A}$ is $\operatorname{diam}\left(\mathrm{A}\right)=\sup\limits_{x,y\in\mathrm{A}}\operatorname{d}_\mathrm{X}\left(x,y\right)$ and $\mathrm{A}$ is \textbf{bounded} if $\operatorname{diam}\left(\mathrm{A}\right)<\infty$.
\end{Def}

For a Euclidean submanifold $\mathcal{M}\subset\mathbb{R}^n$ we always regard $\mathcal{M}$ as a metric space with the restriction of the ambient Euclidean metric, that is, with the \textbf{extrinsic} distance $\operatorname{d}\left(x,y\right)=\lVert x-y\rVert$ for $x,y\in\mathcal{M}$. This is distinct from the \textbf{intrinsic} (geodesic) distance $\operatorname{d}_g\left(x,y\right)$, defined as the infimum of the lengths of paths in $\mathcal{M}$ joining $x$ and $y$. The two quantities agree only infinitesimally and in general satisfy $\operatorname{d}\left(x,y\right)\leq\operatorname{d}_g\left(x,y\right)$.

\begin{Def}\label{def:distortion}
    Let $(\mathrm{X},\operatorname{d}_\mathrm{X})$ and $(\mathrm{Y},\operatorname{d}_\mathrm{Y})$ be metric spaces and $f:\mathrm{X}\to\mathrm{Y}$ a map. The \textbf{distortion} of $f$ is
    $$\operatorname{dis}\left(f\right)=\sup\limits_{x,x'\in\mathrm{X}}\left|\operatorname{d}_\mathrm{Y}\left(f\left(x\right),f\left(x'\right)\right)-\operatorname{d}_\mathrm{X}\left(x,x'\right)\right|.$$
    The map $f$ is an \textbf{isometric embedding} if $\operatorname{dis}\left(f\right)=0$ and an \textbf{isometry} if, in addition, it is a bijection. $f$ is \textbf{Lipschitz} with constant $L\geq0$ if $\operatorname{d}_\mathrm{Y}\left(f\left(x\right),f\left(x'\right)\right)\leq L\,\operatorname{d}_\mathrm{X}\left(x,x'\right)$ for all $x,x'\in\mathrm{X}$.
\end{Def}

\begin{Def}
    Let $\mathcal{M}\subset\mathbb{R}^n$. We say $\mathcal{M}$ is a \textbf{topological $d$-manifold} (without boundary) if it is a second countable Hausdorff space (in the subspace topology) such that every point $x\in\mathcal{M}$ has an open neighborhood $\mathrm{U}\subset\mathcal{M}$ homeomorphic to an open subset of $\mathbb{R}^d$. The integer $d$ is the \textbf{dimension} of $\mathcal{M}$. A homeomorphism $\varphi:\mathrm{U}\to\varphi\left(\mathrm{U}\right)\subset\mathbb{R}^d$ is a \textbf{chart} and a collection of charts whose domains cover $\mathcal{M}$ is an \textbf{atlas}.
\end{Def}

\begin{Def}\label{def:regularity}
    Let $\mathcal{M}\subset\mathbb{R}^n$ be a topological $d$-manifold and let $k\in\mathbb{Z}^{\geq0}$. An atlas $\{(\mathrm{U}_\alpha,\varphi_\alpha)\}_\alpha$ is of class $C^{k}$ if for every pair of charts the transition map $\varphi_\beta\circ\varphi_\alpha^{-1}$ is $k$-times continuously differentiable on its domain. We say $\mathcal{M}$ is a \textbf{$C^k$ submanifold} (a \textbf{smooth} submanifold when $k=\infty$) if it admits such an atlas. A submanifold is said to be $C^{1,1}$ if it is a $C^1$ submanifold and the first derivatives of the charts are locally Lipschitz.  For a $C^1$ submanifold $\mathcal{M}\subset\mathbb{R}^n$ and $x\in\mathcal{M}$ we write $T_x\mathcal{M}\subset\mathbb{R}^n$ for the \textbf{tangent space} at $x$, the $d$-dimensional linear subspace of directions tangent to $\mathcal{M}$ at $x$ and $T_x\mathcal{M}^\perp$ for its orthogonal complement, the \textbf{normal space}. We denote by $\pi_{T_x\mathcal{M}}:\mathbb{R}^n\to T_x\mathcal{M}$ the orthogonal projection onto the (affine) tangent space at $x$.

\end{Def}

\begin{Def}
    For a compact subset $\mathcal{M} \subset \mathbb{R}^n$ we define the \textbf{distance function} to $\mathcal{M}$, $\operatorname{d}_\mathcal{M}\left(\cdot\right):\mathbb{R}^n\to \mathbb{R}^{\geq0}$, by $\operatorname{d}_{\mathcal{M}}\left(x\right) = \operatorname{inf}_{y\in \mathcal{M}}\operatorname{d}\left(x,y\right)$, where $\operatorname{d}\left(\cdot,\cdot\right)$ is the Euclidean distance.


\end{Def}


\begin{Def}\label{def:reach}
    For a closed set $\mathcal{S}\subset\mathbb{R}^n$ the \textbf{medial axis} of $\mathcal{S}$ is the set
    $$\operatorname{Med}_{\mathcal{S}}=\overline{\left\{z\in\mathbb{R}^n\,\middle|\,\exists\, x,y\in\mathcal{S},\; x\neq y,\;\operatorname{d}_\mathcal{S}\left(z\right)=\operatorname{d}\left(x,z\right)=\operatorname{d}\left(y,z\right)\right\}},$$
    that is, the closure of the set of points having at least two distinct nearest points in $\mathcal{S}$. The \textbf{reach} of $\mathcal{S}$ is
    $$\operatorname{reach}\left(\mathcal{S}\right)=\inf\limits_{x\in\mathcal{S}}\operatorname{d}_{\operatorname{Med}_{\mathcal{S}}}\left(x\right),$$
    the smallest distance from $\mathcal{S}$ to its medial axis. Equivalently, $\operatorname{reach}\left(\mathcal{S}\right)$ is the supremum of all $r\geq0$ such that every point $x$ with $\operatorname{d}_\mathcal{S}\left(x\right)<r$ has a unique nearest point in $\mathcal{S}$. From here on we denote the reach of a manifold $\mathcal{M}$ by $\operatorname{r_1}(\mathcal{M})$ or $\operatorname{r_1}$ when there is no ambiguity.
\end{Def}


\begin{figure}[htb]
\centering
\begin{tikzpicture}[scale=1.8,line join=round]

\draw[blue!45,opacity=0.30,line width=3.6mm]
  (0.483975,-0.5)
  arc[start angle=-150, end angle=150,  radius=1]
  arc[start angle=-30,  end angle=-150, radius=0.558846]
  arc[start angle=30,   end angle=330,  radius=1]
  arc[start angle=150,  end angle=30,   radius=0.558846] -- cycle;

\draw[red!70!black,dashed,thick] (-1.35,0)--(1.35,0);
\draw[red!70!black,dashed,thick] (0,0.779423)--(0,1.55);
\draw[red!70!black,dashed,thick] (0,-0.779423)--(0,-1.55);
\fill[red!70!black] (-1.35,0) circle (0.8pt);
\fill[red!70!black] (1.35,0) circle (0.8pt);
\fill[red!70!black] (0,0.779423) circle (0.8pt);
\fill[red!70!black] (0,-0.779423) circle (0.8pt);

\draw[black,thick]
  (0.483975,-0.5)
  arc[start angle=-150, end angle=150,  radius=1]
  arc[start angle=-30,  end angle=-150, radius=0.558846]
  arc[start angle=30,   end angle=330,  radius=1]
  arc[start angle=150,  end angle=30,   radius=0.558846] -- cycle;

\draw[green!55!black,thick] (0,0)--(0,0.220577);
\fill[green!55!black] (0,0) circle (1.1pt);
\fill[green!55!black] (0,0.220577) circle (1.1pt);
\draw[green!55!black,thin] (0.06,0.13)--(0.74,0.40)
  node[anchor=west,font=\small,inner sep=1.5pt] {$\operatorname{r}_1\left(\mathcal{S}\right)$};

\draw[black,thin] (1.35,1)--(1.35,1.30)
  node[anchor=south,font=\small,inner sep=1.5pt] {$\mathcal{S}$};
\draw[blue!55!black,thin] (-1.35,1.05)--(-1.35,1.30)
  node[anchor=south,font=\small,inner sep=1.5pt] {$\mathcal{S}^\epsilon$};
\node[red!70!black,font=\small,anchor=west] at (0.08,1.40) {$\operatorname{Med}_\mathcal{S}$};
\end{tikzpicture}
\caption{A dumbbell-shaped closed curve $\mathcal{S}\subset\mathbb{R}^2$. Dashed: the medial axis $\operatorname{Med}_\mathcal{S}$, consisting of the segment joining the two circle centres inside $\mathcal{S}$ together with two vertical rays outside it. Shaded: the offset $\mathcal{S}^\epsilon$ for an $\epsilon$ below the reach, so that it is a tube about $\mathcal{S}$ and does not self-intersect. The reach $\operatorname{r}_1\left(\mathcal{S}\right)$ is attained in the handle, as the distance from the innermost point of a joining arc to the medial axis.}
\label{fig:dumbbell}
\end{figure}
\begin{rem}
     Every compact $C^{1,1}$ submanifold of $\mathbb{R}^n$ has positive reach and every (closed) convex set has infinite reach, with the medial axis empty. For a $C^2$ submanifold the reach is bounded above by the inverse of the maximal principal curvature (see \cite{Niyogi2008}), so the reach roughly quantifies how sharply $\mathcal{M}$ curves and how close distinct ``sheets" of $\mathcal{M}$ come to one another.
     
     
\end{rem}

If a compact set $\mathcal{S}\subset\mathbb{R}^n$ has positive reach $\operatorname{r}_1>0$, then the nearest-point map is well defined and continuous on the open $r$-offset for every $r<\operatorname{r}_1$. We denote it by $\pi_\mathcal{S}:\mathcal{S}^{\operatorname{r}_1}\to\mathcal{S}$, $\pi_\mathcal{S}\left(y\right)=\operatorname{argmin}\limits_{x\in\mathcal{S}}\operatorname{d}\left(x,y\right)$ and refer to $\pi_\mathcal{S}$ as the \textbf{metric projection} onto $\mathcal{S}$.
\begin{Def}
For a compact subset $\mathcal{M}\subset\mathbb{R}^n$ the \textbf{$r$-offset} $\mathcal{M}^r$ as $\mathcal{M}^r=\bigcup\limits_{x\in\mathcal{M}}\mathbb{B}\left(x,r\right)=\{y\in\mathbb{R}^n\mid\operatorname{d}_\mathcal{M}\left(y\right)<r\}$, the set of points at Euclidean distance less than $r$ from $\mathcal{M}$.
\end{Def}

\subsection{Hausdorff and Gromov-Hausdorff distances}

The Hausdorff distance compares two subsets that already lie in a common metric space, while the Gromov-Hausdorff distance compares two metric spaces in the abstract, regardless of any ambient space. We collect their definitions together with the formulations through correspondences and through the distortion and co-distortion of maps. Throughout, the $\epsilon$-offset of a subset $\mathrm{A}$ of a metric space $(\mathrm{Z},\operatorname{d}_\mathrm{Z})$ is $\mathrm{A}^\epsilon=\{z\in\mathrm{Z}\mid\operatorname{d}_\mathrm{Z}\left(z,\mathrm{A}\right)<\epsilon\}$, where $\operatorname{d}_\mathrm{Z}\left(z,\mathrm{A}\right)=\inf\limits_{a\in\mathrm{A}}\operatorname{d}_\mathrm{Z}\left(z,a\right)$.

\begin{Def}
    Let $(\mathrm{Z},\operatorname{d}_\mathrm{Z})$ be a metric space and $\mathrm{A},\mathrm{B}\subset\mathrm{Z}$ two nonempty subsets. The \textbf{Hausdorff distance} between $\mathrm{A}$ and $\mathrm{B}$ is
    $$\operatorname{d}_\mathrm{H}\left(\mathrm{A},\mathrm{B}\right)=\max\left\{\sup\limits_{a\in\mathrm{A}}\operatorname{d}_\mathrm{Z}\left(a,\mathrm{B}\right),\;\sup\limits_{b\in\mathrm{B}}\operatorname{d}_\mathrm{Z}\left(b,\mathrm{A}\right)\right\}=\inf\left\{\epsilon>0\mid\mathrm{A}\subset\mathrm{B}^\epsilon\;\text{and}\;\mathrm{B}\subset\mathrm{A}^\epsilon\right\}.$$
\end{Def}

 The Gromov-Hausdorff distance is obtained by isometrically embedding both spaces into a common one and minimizing the Hausdorff distance over all such embeddings.

\begin{Def}
    Let $(\mathrm{X},\operatorname{d}_\mathrm{X})$ and $(\mathrm{Y},\operatorname{d}_\mathrm{Y})$ be compact metric spaces. The \textbf{Gromov-Hausdorff distance} between $\mathrm{X}$ and $\mathrm{Y}$ is
    $$\operatorname{d}_\mathrm{GH}\left(\mathrm{X},\mathrm{Y}\right)=\inf\limits_{(\mathrm{Z},\iota_\mathrm{X},\iota_\mathrm{Y})}\operatorname{d}_\mathrm{H}^\mathrm{Z}\!\left(\iota_\mathrm{X}\left(\mathrm{X}\right),\iota_\mathrm{Y}\left(\mathrm{Y}\right)\right),$$
    where the infimum is taken over all metric spaces $(\mathrm{Z},\operatorname{d}_\mathrm{Z})$ together with isometric embeddings $\iota_\mathrm{X}:\mathrm{X}\hookrightarrow\mathrm{Z}$ and $\iota_\mathrm{Y}:\mathrm{Y}\hookrightarrow\mathrm{Z}$ and $\operatorname{d}_\mathrm{H}^\mathrm{Z}$ is the Hausdorff distance in $\mathrm{Z}$.
\end{Def}
The embedding definition of the Gromov-Hausdorff distance is usually not convenient for computation. The following reformulation (see\cite{Burago2001}) through correspondences avoids any reference to an ambient space and is the form used throughout the sections that follow.

\begin{Def}
    Let $\mathrm{X}$ and $\mathrm{Y}$ be sets. A \textbf{correspondence} between $\mathrm{X}$ and $\mathrm{Y}$ is a subset $\mathrm{C}\subset\mathrm{X}\times\mathrm{Y}$ whose projections to each factor are surjective, that is, for every $x\in\mathrm{X}$ there exists $y\in\mathrm{Y}$ with $\left(x,y\right)\in\mathrm{C}$ and for every $y\in\mathrm{Y}$ there exists $x\in\mathrm{X}$ with $\left(x,y\right)\in\mathrm{C}$. If $(\mathrm{X},\operatorname{d}_\mathrm{X})$ and $(\mathrm{Y},\operatorname{d}_\mathrm{Y})$ are metric spaces, the \textbf{distortion} of a correspondence $\mathrm{C}$ is
    $$\operatorname{dis}\left(\mathrm{C}\right)=\sup\left\{\left|\operatorname{d}_\mathrm{X}\left(x,x'\right)-\operatorname{d}_\mathrm{Y}\left(y,y'\right)\right|\;\mid\;\left(x,y\right),\left(x',y'\right)\in\mathrm{C}\right\}.$$
\end{Def}

\begin{prop}\label{prop:ghcorr}
    For compact metric spaces $\mathrm{X}$ and $\mathrm{Y}$,
    $$\operatorname{d}_\mathrm{GH}\left(\mathrm{X},\mathrm{Y}\right)=\frac{1}{2}\inf\limits_{\mathrm{C}}\operatorname{dis}\left(\mathrm{C}\right),$$
    where the infimum ranges over all correspondences $\mathrm{C}\subset\mathrm{X}\times\mathrm{Y}$ and the infimum is attained.
\end{prop}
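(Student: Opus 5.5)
We prove the two inequalities $\operatorname{d}_\mathrm{GH}\le\frac12\inf_{\mathrm{C}}\operatorname{dis}(\mathrm{C})$ and $\operatorname{d}_\mathrm{GH}\ge\frac12\inf_{\mathrm{C}}\operatorname{dis}(\mathrm{C})$ separately. Then we use compactness to show that the infimum over correspondences is attained.

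For the upper bound on $\operatorname{d}_\mathrm{GH}$, fix a correspondence $\mathrm{C}$ with $\operatorname{dis}(\mathrm{C})=2r>0$. We build a metric on the disjoint union $\mathrm{Z}=\mathrm{X}\sqcup\mathrm{Y}$. Inside each piece it is $\operatorname{d}_\mathrm{X}$ or $\operatorname{d}_\mathrm{Y}$, and across the pieces it is
$$\operatorname{d}_\mathrm{Z}(x,y)=\inf_{(x',y')\in\mathrm{C}}\left(\operatorname{d}_\mathrm{X}(x,x')+r+\operatorname{d}_\mathrm{Y}(y',y)\right).$$
Positivity across the pieces holds because every cross distance is at least $r>0$. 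The triangle inequality has two nontrivial cases.

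\emph{Case $x,\tilde x\in\mathrm{X}$, $y\in\mathrm{Y}$.} We must show $\operatorname{d}_\mathrm{X}(x,\tilde x)\le\operatorname{d}_\mathrm{Z}(x,y)+\operatorname{d}_\mathrm{Z}(y,\tilde x)$. Pick $(x_1,y_1),(x_2,y_2)\in\mathrm{C}$ nearly realizing the two infima. The triangle inequality gives $\operatorname{d}_\mathrm{X}(x,\tilde x)\le \operatorname{d}_\mathrm{X}(x,x_1)+\operatorname{d}_\mathrm{X}(x_1,x_2)+\operatorname{d}_\mathrm{X}(x_2,\tilde x)$. The distortion bound gives $\operatorname{d}_\mathrm{X}(x_1,x_2)\le\operatorname{d}_\mathrm{Y}(y_1,y_2)+2r\le\operatorname{d}_\mathrm{Y}(y_1,y)+\operatorname{d}_\mathrm{Y}(y,y_2)+2r$. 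Combining these yields the required inequality. This is the one step that genuinely uses $\operatorname{dis}(\mathrm{C})\le 2r$.

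\emph{Case $x\in\mathrm{X}$, $y,\tilde y\in\mathrm{Y}$.} Here we need $\operatorname{d}_\mathrm{Z}(x,\tilde y)\le\operatorname{d}_\mathrm{Z}(x,y)+\operatorname{d}_\mathrm{Y}(y,\tilde y)$. This follows directly from the definition of $\operatorname{d}_\mathrm{Z}(x,\cdot)$ as an infimum, together with the triangle inequality in $\mathrm{Y}$.

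With this metric, every $x$ is within $r$ of some $y$ with $(x,y)\in\mathrm{C}$, and vice versa, so $\operatorname{d}_\mathrm{H}\le r$. Hence $\operatorname{d}_\mathrm{GH}\le r$. If $\operatorname{dis}(\mathrm{C})=0$, we apply the same construction with $r=\epsilon\downarrow 0$.

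For the lower bound on $\operatorname{d}_\mathrm{GH}$, suppose $\operatorname{d}_\mathrm{H}(\iota_\mathrm{X}(\mathrm{X}),\iota_\mathrm{Y}(\mathrm{Y}))<r$ in some $\mathrm{Z}$. Define $\mathrm{C}=\{(x,y)\mid \operatorname{d}_\mathrm{Z}(\iota_\mathrm{X}x,\iota_\mathrm{Y}y)<r\}$. This is a correspondence by the Hausdorff condition. For $(x,y),(x',y')\in\mathrm{C}$, the triangle inequality in $\mathrm{Z}$ gives $|\operatorname{d}_\mathrm{X}(x,x')-\operatorname{d}_\mathrm{Y}(y,y')|<2r$. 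So $\frac12\inf\operatorname{dis}\le r$, and taking the infimum over $(\mathrm{Z},\iota_\mathrm{X},\iota_\mathrm{Y})$ gives the bound.

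The main obstacle is attainment of the infimum. We first note that the distortion of a correspondence equals the distortion of its closure in $\mathrm{X}\times\mathrm{Y}$, by continuity of the metrics. A closed subset of $\mathrm{X}\times\mathrm{Y}$ with surjective projections is again a correspondence, so we may restrict to closed correspondences. The plan is then to take a minimizing sequence $\mathrm{C}_k$ of closed correspondences. The space of nonempty closed subsets of the compact space $\mathrm{X}\times\mathrm{Y}$ is compact in the Hausdorff metric (Blaschke's theorem), so a subsequence converges to some closed $\mathrm{C}$. Two points then need checking:
\begin{itemize}
\item $\mathrm{C}$ still projects onto both factors. Each $x$ has $(x,y_k)\in\mathrm{C}_k$, and a limit point of $(x,y_k)$ lies in $\mathrm{C}$; the same argument works for each $y$.
\item $\operatorname{dis}$ is upper semicontinuous under Hausdorff convergence. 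Pairs in $\mathrm{C}$ are limits of pairs in $\mathrm{C}_k$, and the distortion function is continuous in the pair of points.
\end{itemize}
Together these give $\operatorname{dis}(\mathrm{C})\le\liminf_k\operatorname{dis}(\mathrm{C}_k)=\inf_{\mathrm{C}'}\operatorname{dis}(\mathrm{C}')$, which completes the proof.
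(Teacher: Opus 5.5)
Your proof is correct. It follows the standard route: the disjoint-union metric with cross distance $\inf_{(x',y')\in\mathrm{C}}\bigl(\operatorname{d}_\mathrm{X}(x,x')+r+\operatorname{d}_\mathrm{Y}(y',y)\bigr)$ for one inequality, the correspondence $\{\operatorname{d}_\mathrm{Z}<r\}$ for the other, and Blaschke compactness of closed subsets of $\mathrm{X}\times\mathrm{Y}$ for attainment. The paper gives no proof of its own and cites Burago--Burago--Ivanov, where the equality is proved by exactly this construction.

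One wording slip: the property you prove and use, $\operatorname{dis}(\mathrm{C})\le\liminf_k\operatorname{dis}(\mathrm{C}_k)$, is lower semicontinuity, not upper. In fact $\operatorname{dis}$ is continuous for the Hausdorff metric on closed correspondences, since $|\operatorname{d}_\mathrm{X}(x,x')-\operatorname{d}_\mathrm{Y}(y,y')|$ is uniformly continuous on the compact space $(\mathrm{X}\times\mathrm{Y})^2$.
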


Finally we record the equivalent description through pairs of maps, which makes explicit the distortion and co-distortion quantities (see Chapter 7 \cite{Burago2001}). Recall the distortion $\operatorname{dis}\left(f\right)$ of a single map $f:\mathrm{X}\to\mathrm{Y}$ from Definition \ref{def:distortion}.

\begin{Def}
    Let $(\mathrm{X},\operatorname{d}_\mathrm{X})$ and $(\mathrm{Y},\operatorname{d}_\mathrm{Y})$ be metric spaces and let $f:\mathrm{X}\to\mathrm{Y}$ and $g:\mathrm{Y}\to\mathrm{X}$ be (not necessarily continuous) maps. The \textbf{co-distortion} of the pair $(f,g)$ is
    $$\operatorname{dis}\left(f,g\right)=\sup\limits_{\substack{x\in\mathrm{X}\\ y\in\mathrm{Y}}}\left|\operatorname{d}_\mathrm{X}\left(x,g\left(y\right)\right)-\operatorname{d}_\mathrm{Y}\left(f\left(x\right),y\right)\right|.$$
\end{Def}

\begin{prop}\label{prop:ghmaps}
    For compact metric spaces $\mathrm{X}$ and $\mathrm{Y}$,
    $$\operatorname{d}_\mathrm{GH}\left(\mathrm{X},\mathrm{Y}\right)=\frac{1}{2}\inf\limits_{f,g}\max\left\{\operatorname{dis}\left(f\right),\,\operatorname{dis}\left(g\right),\,\operatorname{dis}\left(f,g\right)\right\},$$
    where the infimum is taken over all pairs of maps $f:\mathrm{X}\to\mathrm{Y}$ and $g:\mathrm{Y}\to\mathrm{X}$.
\end{prop}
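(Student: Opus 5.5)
We prove Proposition \ref{prop:ghmaps} by comparing it with Proposition \ref{prop:ghcorr}, which we take as given. Write $\mathrm{D}(f,g)=\max\{\operatorname{dis}(f),\operatorname{dis}(g),\operatorname{dis}(f,g)\}$. We show two inequalities: every pair $(f,g)$ yields a correspondence of distortion at most $\mathrm{D}(f,g)$, and every correspondence $\mathrm{C}$ yields a pair of maps with $\mathrm{D}(f,g)\le\operatorname{dis}(\mathrm{C})$. Taking infima and using $\operatorname{d}_\mathrm{GH}=\frac12\inf_\mathrm{C}\operatorname{dis}(\mathrm{C})$ then gives the formula.

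\emph{From maps to a correspondence.} Given $f:\mathrm{X}\to\mathrm{Y}$ and $g:\mathrm{Y}\to\mathrm{X}$, set
$$\mathrm{C}=\{(x,f(x))\mid x\in\mathrm{X}\}\cup\{(g(y),y)\mid y\in\mathrm{Y}\}.$$
Both projections are surjective, so $\mathrm{C}$ is a correspondence. Take two pairs in $\mathrm{C}$ and compare $|\operatorname{d}_\mathrm{X}(x,x')-\operatorname{d}_\mathrm{Y}(y,y')|$; there are three cases.
\begin{itemize}
\item If both pairs come from the graph of $f$, the quantity is at most $\operatorname{dis}(f)$.
\item If both come from the graph of $g$, it is at most $\operatorname{dis}(g)$.
\item If the pairs are $(x,f(x))$ and $(g(y),y)$, it equals $|\operatorname{d}_\mathrm{X}(x,g(y))-\operatorname{d}_\mathrm{Y}(f(x),y)|\le\operatorname{dis}(f,g)$.
\end{itemize}
Hence $\operatorname{dis}(\mathrm{C})\le\mathrm{D}(f,g)$, and therefore $\operatorname{d}_\mathrm{GH}\le\frac12\inf_{f,g}\mathrm{D}(f,g)$.

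\emph{From a correspondence to maps.} Given a correspondence $\mathrm{C}$, use surjectivity of the projections (and the axiom of choice) to choose, for each $x$, some $f(x)$ with $(x,f(x))\in\mathrm{C}$. Likewise choose, for each $y$, some $g(y)$ with $(g(y),y)\in\mathrm{C}$. The graphs of $f$ and $g$ lie in $\mathrm{C}$, so:
\begin{itemize}
\item applying the distortion bound of $\mathrm{C}$ to the pairs $(x,f(x)),(x',f(x'))$ gives $\operatorname{dis}(f)\le\operatorname{dis}(\mathrm{C})$;
\item similarly $\operatorname{dis}(g)\le\operatorname{dis}(\mathrm{C})$;
\item applying it to $(x,f(x))$ and $(g(y),y)$ bounds $|\operatorname{d}_\mathrm{X}(x,g(y))-\operatorname{d}_\mathrm{Y}(f(x),y)|$ by $\operatorname{dis}(\mathrm{C})$, so $\operatorname{dis}(f,g)\le\operatorname{dis}(\mathrm{C})$.
\end{itemize}
Thus $\inf_{f,g}\mathrm{D}(f,g)\le\operatorname{dis}(\mathrm{C})$ for every $\mathrm{C}$, and Proposition \ref{prop:ghcorr} gives the reverse inequality.

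There is no substantive obstacle. The only points needing care are the case bookkeeping in the first direction, in particular that the mixed case is controlled exactly by the co-distortion, and the fact that the maps need not be continuous, so choosing them arbitrarily is legitimate.
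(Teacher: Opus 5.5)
Your proposal is correct and follows the paper's own route. The paper cites Burago et al.\ and justifies the proposition only by the remark after it, which passes between the two formulations in the same way you do: choose $f,g$ with graphs inside $\mathrm{C}$, and conversely form $\mathrm{C}=\{(x,f(x))\}\cup\{(g(y),y)\}$, whose distortion is $\max\{\operatorname{dis}(f),\operatorname{dis}(g),\operatorname{dis}(f,g)\}$ (the paper states equality, but your inequality is all that is needed).
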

\noindent The three formulations are interchangeable. A correspondence $\mathrm{C}$ determines a pair of maps by choosing, for each $x\in\mathrm{X}$, a point $f\left(x\right)\in\mathrm{Y}$ with $\left(x,f\left(x\right)\right)\in\mathrm{C}$ and similarly $g$. Conversely a pair $(f,g)$ determines the correspondence $\mathrm{C}=\{\left(x,f\left(x\right)\right)\mid x\in\mathrm{X}\}\cup\{\left(g\left(y\right),y\right)\mid y\in\mathrm{Y}\}$ and $\operatorname{dis}\left(\mathrm{C}\right)=\max\{\operatorname{dis}\left(f\right),\operatorname{dis}\left(g\right),\operatorname{dis}\left(f,g\right)\}$. This idea is used repeatedly in the sections that follow: a bound $r>2\operatorname{d}_\mathrm{GH}\left(\mathrm{A},\mathcal{M}\right)$ yields a correspondence, hence maps $h:\mathcal{M}\to\mathrm{A}$ and $g:\mathrm{A}\to\mathcal{M}$ of controlled distortion, which in turn induce simplicial maps between the associated complexes.

\subsection{Homology, homotopy and homology manifolds}

The bounds on Gromov-Hausdorff distance in later sections are obtained by tracking a fundamental class through a sequence of induced maps on homology, so we briefly recall the relevant algebraic topology notions, following \cite{MR1867354}. Throughout, homology is taken with coefficients in an abelian group $G$. We use $G=\mathbb{Z}_2$ unless stated otherwise and write $\mathrm{H}_n\left(\mathrm{X};G\right)$ for the $n$-th singular homology group of a topological space $\mathrm{X}$. A continuous map $f:\mathrm{X}\to\mathrm{Y}$ induces a homomorphism $f_*:\mathrm{H}_n\left(\mathrm{X};G\right)\to\mathrm{H}_n\left(\mathrm{Y};G\right)$ for each $n$, in a functorial way: $\left(g\circ f\right)_*=g_*\circ f_*$ and $\left(\operatorname{id}_\mathrm{X}\right)_*=\operatorname{id}$. For a pair $(\mathrm{X},\mathrm{A})$ with $\mathrm{A}\subset\mathrm{X}$ we write $\mathrm{H}_n\left(\left(\mathrm{X},\mathrm{A}\right);G\right)$ for the relative homology groups. When $\mathrm{X}$ is the geometric realization of a simplicial complex $\mathrm{K}$, singular homology agrees with the simplicial homology of $\mathrm{K}$ and for any abelian group $G$ the $d$-th homology depends only on the $\left(d+1\right)$-skeleton, that is $\mathrm{H}_d\left(\mathrm{K};G\right)=\mathrm{H}_d\left(\mathrm{K}^{\left(d+1\right)};G\right)$.

\begin{Def}
    Two continuous maps $f,g:\mathrm{X}\to\mathrm{Y}$ are \textbf{homotopic}, written $f\simeq g$, if there is a continuous map $\mathrm{F}:\mathrm{X}\times[0,1]\to\mathrm{Y}$ with $\mathrm{F}\left(\cdot,0\right)=f$ and $\mathrm{F}\left(\cdot,1\right)=g$. A map $f:\mathrm{X}\to\mathrm{Y}$ is a \textbf{homotopy equivalence} if there exists $g:\mathrm{Y}\to\mathrm{X}$ with $g\circ f\simeq\operatorname{id}_\mathrm{X}$ and $f\circ g\simeq\operatorname{id}_\mathrm{Y}$. We then say $\mathrm{X}$ and $\mathrm{Y}$ are \textbf{homotopy equivalent} and write $\mathrm{X}\simeq\mathrm{Y}$. Homotopic maps induce the same homomorphism on homology, so a homotopy equivalence induces an isomorphism $f_*:\mathrm{H}_n\left(\mathrm{X};G\right)\xrightarrow{\;\cong\;}\mathrm{H}_n\left(\mathrm{Y};G\right)$ for every $n$.
\end{Def}

\begin{Def}
    Let $\mathrm{A}\subset\mathrm{X}$. A \textbf{deformation retract} of $\mathrm{X}$ onto $\mathrm{A}$ is a continuous map $\mathrm{F}:\mathrm{X}\times[0,1]\to\mathrm{X}$ such that, for all $x\in\mathrm{X}$ and $a\in\mathrm{A}$, $\mathrm{F}\left(x,0\right)=x$, $\mathrm{F}\left(x,1\right)\in\mathrm{A}$ and $\mathrm{F}\left(a,1\right)=a$. If such an $\mathrm{F}$ exists we say $\mathrm{X}$ \textbf{deformation retracts} onto $\mathrm{A}$. The inclusion $\iota:\mathrm{A}\hookrightarrow\mathrm{X}$ then, is a homotopy equivalence and in particular $\iota_*:\mathrm{H}_n\left(\mathrm{A};G\right)\to\mathrm{H}_n\left(\mathrm{X};G\right)$ is an isomorphism for every $n$.
\end{Def}

\begin{Def}
    A set $\mathcal{M}\subset\mathbb{R}^n $ is a homology $d$-manifold if,
    $$\operatorname{H}_i \left(\left(\mathcal{M},\mathcal{M}\backslash\{x\}\right);\mathbb{Z}_2\right)=\begin{cases}
\mathbb{Z}_2 & i=d\\
\\
0 & i \neq d
\end{cases}$$
for any $x\in\mathcal{M}$. If the subset is compact and connected, the fundamental class, denoted by $\left[\mathcal{M}\right]$, is well defined and is the unique generator of the $d$-homology group with coefficients in $\mathbb{Z}_2$. Moreover, for a connected homology $d$-manifold, we have
$$\operatorname{H}_d \left(\mathcal{M};\mathbb{Z}_2\right)=\begin{cases}
\mathbb{Z}_2 & \text{if}\;\;\mathcal{M}\;\;\text{is compact}\\
\\
0 & \text{otherwise}
\end{cases}$$
\end{Def}

\begin{lemma}\label{lemma:contra}
    If $\mathcal{M}\subset \mathbb{R}^n$ is a compact connected homology $d$-manifold and $\mathrm{A}\subsetneq\mathcal{M}$ is open in the subspace topology, then
    $\mathrm{H}_d \left(\mathrm{A};\mathbb{Z}_2\right)=0$.
    \begin{proof}
       As $\mathrm{A}$ is a proper open subset of $\mathcal{M}$, for each $x\in\mathrm{A}\;\exists\; \mathrm{U}_x\underset{\text{open}}{\subset}\mathcal{M}$ with $x\in\mathrm{U}_x\subset \mathrm{A}$, thus,
       $$\operatorname{H}_i \left(\left(\mathrm{A},\mathrm{A}\backslash\{x\}\right);\mathbb{Z}_2\right)=\operatorname{H}_i \left(\left(\mathrm{U}_x,\mathrm{U}_x\backslash\{x\}\right);\mathbb{Z}_2\right)=\operatorname{H}_i \left(\left(\mathcal{M},\mathcal{M}\backslash\{x\}\right);\mathbb{Z}_2\right)\;\forall\,i$$ That is, $\mathrm{A}$ is a homology $d$-manifold. Note that $\mathrm{A}$ cannot be compact because if it were, then it would be closed and open in $\mathcal{M}$, contradicting the assumption that $\mathcal{M}$ is connected. Hence,  $\mathrm{H}_d \left(\mathrm{A};\mathbb{Z}_2\right)=0$
    \end{proof}
\end{lemma}



\subsection{Simplicial complexes, geometric realization and \v{C}ech complexes}

The bounds proved later are combinatorial in nature: they pass through simplicial complexes built on a metric space and the simplicial maps between them. We recall the relevant notions and then describe the construction, which may be viewed as a counterpart to the simplicial approximation theorem.

\begin{Def}
    An \textbf{abstract simplicial complex} $\mathrm{K}$ on a vertex set $\mathrm{V}$ is a collection of finite nonempty subsets of $\mathrm{V}$, called \textbf{simplices}, such that $\{v\}\in\mathrm{K}$ for every $v\in\mathrm{V}$ and every nonempty subset of a simplex is again a simplex (a \textbf{face} of it). A simplex with $k+1$ vertices is a $k$-simplex. The \textbf{$p$-skeleton} $\mathrm{K}^{\left(p\right)}$ is the subcomplex consisting of all simplices of dimension at most $p$.
\end{Def}

\begin{Def}
    The \textbf{geometric realization} $\left|\mathrm{K}\right|$ of a simplicial complex $\mathrm{K}$ is the topological space obtained by assigning to each vertex $v\in\mathrm{V}$ a basis vector $e_v$ of the real vector space $\mathbb{R}^{\left(\mathrm{V}\right)}$ and realizing each simplex $\sigma=\left[v_0,\dots,v_k\right]$ as the convex hull of $\{e_{v_0},\dots,e_{v_k}\}$, with the weak topology. When $\mathrm{K}$ is finite, $\left|\mathrm{K}\right|$ is a compact subset of a Euclidean space. We use the same symbol for $\mathrm{K}$ and its realization $\left|\mathrm{K}\right|$ when no confusion arises.
\end{Def}

\begin{Def}
    A \textbf{simplicial map} $\varphi:\mathrm{K}\to\mathrm{L}$ is a map of vertex sets $\varphi:\mathrm{V}_\mathrm{K}\to\mathrm{V}_\mathrm{L}$ such that $\varphi\left(\sigma\right)\in\mathrm{L}$ for every simplex $\sigma\in\mathrm{K}$. It induces a continuous map $\left|\mathrm{K}\right|\to\left|\mathrm{L}\right|$ by affine extension on each simplex. Two simplicial maps $\varphi,\psi:\mathrm{K}\to\mathrm{L}$ are \textbf{contiguous} if for every simplex $\sigma\in\mathrm{K}$ the set $\varphi\left(\sigma\right)\cup\psi\left(\sigma\right)$ is a simplex of $\mathrm{L}$. Contiguous simplicial maps induce homotopic maps on realizations and hence the same homomorphism on homology.
\end{Def}

\begin{Def}
    Let $\mathcal{U}=\{\mathrm{U}_i\}_{i\in\mathrm{I}}$ be a cover of a topological space. The \textbf{nerve} $\mathrm{N}\left(\mathcal{U}\right)$ is the abstract simplicial complex with vertex set $\mathrm{I}$ in which a finite subset $\sigma\subset\mathrm{I}$ is a simplex if and only if $\bigcap_{i\in\sigma}\mathrm{U}_i\neq\emptyset$. The cover is a \textbf{good cover} if every nonempty finite intersection $\bigcap_{i\in\sigma}\mathrm{U}_i$ is contractible. The Nerve Lemma then asserts that $\mathrm{N}\left(\mathcal{U}\right)$ is homotopy equivalent to $\bigcup_{i\in\mathrm{I}}\mathrm{U}_i$.
\end{Def}

\begin{Def}
    For $\mathcal{M}$ a subset of $\mathbb{R}^n$, $\mathrm{A}\subset\mathcal{M}$ and $r>0$, the \textbf{ambient \v{C}ech complex} of $\mathrm{A}$ at scale $r$, $\check{\mathrm{C}}\left(\mathrm{A};r\right)$, is the simplicial complex with vertex set $\mathrm{A}$ in which a finite subset $\left[x_0,\dots,x_k\right]\subset\mathrm{A}$ is a simplex if there is some $z\in\mathbb{R}^n$ with $\operatorname{d}\left(z,x_i\right)<r$ for all $0\leq i\leq k$, that is, if $\bigcap_{i=0}^{k}\mathbb{B}\left(x_i,r\right)\neq\emptyset$. The \textbf{intrinsic \v{C}ech complex} (intrinsic with respect to $\mathcal{M}$), $\check{\mathrm{C}}_\mathcal{M}\left(\mathrm{A};r\right)$, has the same vertex set $\mathrm{A}$ and admits $\left[x_0,\dots,x_k\right]$ as a simplex if there is some $z\in\mathcal{M}$ with $\operatorname{d}\left(z,x_i\right)<r$ for all $0\leq i\leq k$, that is, if $\bigcap_{i=0}^{k}\mathbb{B}\left(x_i,r\right)\cap\mathcal{M}\neq\emptyset$. The ambient and intrinsic \v{C}ech complexes are the nerves of the covers $\{\mathbb{B}\left(x,r\right)\}_{x\in\mathrm{A}}$ and $\{\mathbb{B}\left(x,r\right)\cap\mathcal{M}\}_{x\in\mathrm{A}}$ respectively. The \textbf{Vietoris-Rips complex} of $\mathrm{A}$ at scale $r>0$, $\mathrm{VR}\left(\mathrm{A};r\right)$, is the simplicial complex with vertex set $\mathrm{A}$ in which a finite subset $\sigma=\left[x_0,\dots,x_k\right]\subset\mathrm{A}$ is a simplex if $\operatorname{diam}\left(\sigma\right)<r$.
\end{Def}

In the following sections the argument (similar to \cite{adams2025hausdorffvsgromovhausdorffdistances}) is as follows:  build a simplicial complex, either an ambient or intrinsic \v{C}ech complex, or a Vietoris-Rips complex, on a subset $\mathrm{A}\subset\mathcal{M}$. Then we show that this complex is homotopy equivalent to $\mathcal{M}$ by a nerve-type lemma provided the scale $r$ is small relative to the reach $\operatorname{r}_1\left(\mathcal{M}\right)$. A correspondence of controlled distortion between $\mathcal{M}$ and $\mathrm{A}$ gives a vertex map between the two spaces. This map between the spaces themselves need not be continuous. As the distortion is bounded, this map nevertheless carries simplices to simplices and so extends to a simplicial map, whose geometric realization is continuous. Composing such maps and using that contiguous simplicial maps agree on homology, one tracks the fundamental class $\left[\mathcal{M}\right]$ through the resulting diagram and compares the induced homomorphisms with the inclusion. When the scale of the constructed complex is large compared with the Hausdorff (or Gromov-Hausdorff) distance, a nerve-type lemma forces the relevant top homology to vanish and the resulting contradiction yields the desired inequality between the two distances.

Figure \ref{fig:nerveball} gives a brief visual analogy, note that, the nerve complexes used in the proofs in the next sections do not triangulate the manifold, their geometric realizations are homotopic to the manifolds. If the Gromov-Hausdorff distance between the manifold($\mathcal{M}$) and a subset $\mathrm{A}$ is small then we can find a map of small distortion from $\mathcal{M}$ to $\mathrm{A}$.  For a triangulated manifold ( depicted as a deflated ball made of patches in Figure \ref{fig:nerveball}a) the chain represented by the sum of the
top-dimensional simplices (here triangular patches) is the fundamental class of the manifold [$\mathcal{M}$]. As the distortion of mapping $\mathcal{M}$ to $\mathrm{A}$ is small the triangulation given by the distorted simplices (skewed patches in Figure \ref{fig:nerveball}b) should preserve the fundamental class (inflate). But if the Hausdorff distance between $\mathcal{M}$ and $\mathrm{A}$ is too large compared to the Gromov-Hausdorff distance then the chosen map does not stretch (distort) the simplices enough to cover the manifold and thus should not support the fundamental class (should not inflate as Figure \ref{fig:nerveball}c) which leads to a contradiction.
\begin{figure}[htb]
\centering
\input{figure_ball_v1_final}
\caption{  }
\label{fig:nerveball}
\end{figure}

\subsection{Nerve lemmas and homological obstructions}
We apply the following Theorem of \cite{kim2019homotopy} to obtain a Nerve lemma for \v{C}ech complexes of compact subsets with positive $\mu$-reach.
\begin{thm}(Theorem 12 \cite{kim2019homotopy})
    Let $\mathcal{M} \subset \mathbb{R}^n$ be a subset with positive $\mu$-reach $\operatorname{r}_{\mu} \left(\mathcal{M}\right)>0$. For $r \leq \operatorname{r}_{\mu} \left(\mathcal{M}\right)$, the $r$-offset $\mathcal{M}^{r}$ deformation retracts to $\mathcal{M}$. In particular, $\mathcal{M}$ and $\mathcal{M}^{r}$ are homotopy equivalent.
\end{thm}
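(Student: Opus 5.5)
The plan is to use critical point theory for the distance function $\operatorname{d}_\mathcal{M}$, in the sense of Grove, Chazal–Cohen-Steiner–Lieutier and Lieutier. For $x\notin\mathcal{M}$ let $\Gamma_\mathcal{M}(x)$ be the set of nearest points of $x$ in $\mathcal{M}$. Let $\Theta_\mathcal{M}(x)$ be the center and $\mathcal{F}_\mathcal{M}(x)$ the radius of the smallest ball containing $\Gamma_\mathcal{M}(x)$. The generalized gradient is
$$\nabla_\mathcal{M}(x)=\frac{x-\Theta_\mathcal{M}(x)}{\operatorname{d}_\mathcal{M}(x)},\qquad \lVert\nabla_\mathcal{M}(x)\rVert^2=1-\frac{\mathcal{F}_\mathcal{M}(x)^2}{\operatorname{d}_\mathcal{M}(x)^2}.$$
Positive $\mu$-reach $\operatorname{r}_\mu$ means, by definition, that $\lVert\nabla_\mathcal{M}(x)\rVert\geq\mu>0$ for every $x$ with $0<\operatorname{d}_\mathcal{M}(x)<\operatorname{r}_\mu$. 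So on the punctured offset $\mathcal{M}^{r}\setminus\mathcal{M}$, $r\le\operatorname{r}_\mu$, the function $\operatorname{d}_\mathcal{M}$ has no critical points, with a uniform lower bound on the gradient norm. The deformation retraction will be built by flowing \emph{against} this gradient.

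The key steps, in order, are as follows.

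\textbf{Step 1.} Use Lieutier's construction of the gradient semi-flow of $\operatorname{d}_\mathcal{M}$. Although $\nabla_\mathcal{M}$ is not continuous, it is lower semicontinuous in norm. This is enough for a continuous flow $\Phi:\mathbb{R}^n\setminus\mathcal{M}\times[0,\infty)\to\mathbb{R}^n$, obtained as a limit of Euler polygonal schemes, along whose trajectories
$$\operatorname{d}_\mathcal{M}(\Phi(x,t))=\operatorname{d}_\mathcal{M}(x)+\int_0^t\lVert\nabla_\mathcal{M}(\Phi(x,s))\rVert^2\,ds.$$
This flow increases the distance. I would therefore work instead with the backward direction, namely the straight-line/nearest-point interpolation combined with a vector field that decreases $\operatorname{d}_\mathcal{M}$. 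Concretely, choose a continuous vector field $V$ on $\mathcal{M}^{r}\setminus\mathcal{M}$ with $\langle V(x),\nabla_\mathcal{M}(x)\rangle\ge\mu/2$ and $\lVert V\rVert\le1$. This is the standard partition-of-unity trick from the isotopy lemma (Grove; Chazal–Lieutier): lower semicontinuity of the gradient gives local uniform directions, which are then patched together.

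\textbf{Step 2.} Flow along $-V$. Along a trajectory $\operatorname{d}_\mathcal{M}$ decreases at rate at least $\mu/2$, since $\operatorname{d}_\mathcal{M}$ is directionally differentiable with derivative $\langle v,\nabla_\mathcal{M}\rangle$ in the appropriate sense. The trajectory therefore stays in $\mathcal{M}^r$ and reaches $\mathcal{M}$ in time $T(x)\le 2\operatorname{d}_\mathcal{M}(x)/\mu$. This works equally well when $r=\operatorname{r}_\mu$, because the open offset only contains points with $\operatorname{d}_\mathcal{M}<\operatorname{r}_\mu$.

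\textbf{Step 3.} Define $F(x,s)$ by running the flow for time $s\,T(x)$, and set $F(x,s)=x$ for $x\in\mathcal{M}$. This gives a homotopy from the identity to a retraction onto $\mathcal{M}$ that fixes $\mathcal{M}$ pointwise.

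\textbf{Step 4 (main obstacle).} The hardest step is continuity, at two places. First, the hitting time $T$ must be continuous; this requires that trajectories cross level sets transversally, which the uniform bound $\mu/2$ provides. Second, $F$ must be continuous at points of $\mathcal{M}$. For the latter I would use $\lVert F(x,s)-x\rVert\le T(x)\le 2\operatorname{d}_\mathcal{M}(x)/\mu$, which tends to $0$ as $x\to\mathcal{M}$; this yields continuity at $\mathcal{M}$ directly. If building a continuous $V$ turns out to be delicate near $\mathcal{M}$, I would fall back on a sequence of nested offsets $\mathcal{M}^{r_k}$ with $r_k\downarrow0$. On each annulus the flow gives a deformation retraction of $\overline{\mathcal{M}^{r_k}}$ onto $\overline{\mathcal{M}^{r_{k+1}}}$. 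Concatenating these on the time intervals $[1-2^{-k},1-2^{-k-1}]$, the displacement bound above ensures convergence to a continuous limit at $s=1$. The homotopy equivalence $\mathcal{M}\simeq\mathcal{M}^r$ then follows, since the inclusion $\mathcal{M}\hookrightarrow\mathcal{M}^r$ is inverse to the retraction $F(\cdot,1)$.
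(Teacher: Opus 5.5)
The paper gives no proof of this statement: it is quoted from \cite{kim2019homotopy} and used only as a black box in the nerve lemma for compact Euclidean subsets. Your outline is a correct, self-contained proof along the standard route for such results: a continuous field uniformly transverse to the generalized gradient, flowed down to $\mathcal{M}$ and reparametrized by the hitting time. What you gain over the paper is self-containment. However, several steps are justified by the wrong mechanism or left vague, and should be stated as follows.

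In Step~1, local uniformity of directions does not follow from lower semicontinuity of $\lVert\nabla_\mathcal{M}\rVert$, which carries no directional information. It follows from upper semicontinuity of $x\mapsto\Gamma_\mathcal{M}(x)$. Since $\nabla_\mathcal{M}(x)$ is the point of $\operatorname{conv}\{(x-y)/\operatorname{d}_\mathcal{M}(x) : y\in\Gamma_\mathcal{M}(x)\}$ closest to the origin, the unit vector $v_x=\nabla_\mathcal{M}(x)/\lVert\nabla_\mathcal{M}(x)\rVert$ satisfies $\langle v_x,(x-y)/\operatorname{d}_\mathcal{M}(x)\rangle\geq\lVert\nabla_\mathcal{M}(x)\rVert\geq\mu$ for every $y\in\Gamma_\mathcal{M}(x)$. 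Hence $\langle v_x,(x'-y')/\operatorname{d}_\mathcal{M}(x')\rangle\geq\mu/2$ for all $x'$ near $x$ and all $y'\in\Gamma_\mathcal{M}(x')$. This inequality survives convex combinations, so a smooth partition of unity yields a locally Lipschitz $V$ with $\lVert V\rVert\leq1$ satisfying it at every nearest point; in particular $\langle V,\nabla_\mathcal{M}\rangle\geq\mu/2$. You need local Lipschitzness, not mere continuity, to get uniqueness and continuous dependence of the flow in Step~4.

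In Step~2, the one-sided derivative of $\operatorname{d}_\mathcal{M}$ at $x$ in direction $w$ is $\min_{y\in\Gamma_\mathcal{M}(x)}\langle w,(x-y)/\operatorname{d}_\mathcal{M}(x)\rangle$, not $\langle w,\nabla_\mathcal{M}(x)\rangle$. For $w=-V(x)$ it is at most $-\langle V(x),\nabla_\mathcal{M}(x)\rangle\leq-\mu/2$, so your decay rate is correct. Since $\operatorname{d}_\mathcal{M}$ is $1$-Lipschitz, this becomes a Dini-derivative bound along trajectories.

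For Step~4, continuity of $T$ is a short estimate. The point $p=\Phi(x,T(x)-\eta)$ has $\operatorname{d}_\mathcal{M}(p)\leq\eta$, and the arc $\Phi(x,[0,T(x)-\eta])$ is a compact subset of $\mathcal{M}^r\setminus\mathcal{M}$. Continuous dependence then gives $T(x)-\eta<T(x')\leq T(x)-\eta+4\eta/\mu$ for $x'$ near $x$. Together with your bound $\lVert F(x,s)-x\rVert\leq2\operatorname{d}_\mathcal{M}(x)/\mu$ this finishes the argument. So the distance-increasing Lieutier semi-flow and the nested-offset fallback play no role and can be dropped.

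Finally, you implicitly use that $\mathcal{M}$ is closed, so that $\Gamma_\mathcal{M}(x)\neq\emptyset$ and trajectory endpoints lie in $\mathcal{M}$ rather than its closure. This is the standing setting for $\mu$-reach, and it holds in the paper's application, where $\mathcal{M}$ is compact.
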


\begin{lemma}(Nerve Lemma for compact Euclidean subsets)\label{lemma:nervekim}
    Let $\mathcal{M}\subset\mathbb{R}^n$ be a compact subset with positive $\mu$-reach and let $0<r<\operatorname{r}_\mu \left(\mathcal{M}\right)$. Let $\mathcal{U} = \{{\mathbb{B} \left(x,r\right)}\}_{x\in\mathcal{M}}$ and $\mathcal{M}^r = \bigcup\limits_{x\in\mathcal{M}}\mathbb{B} \left(x,r\right) =  \bigcup\limits_{U\in\mathcal{U}}U$. Then the \v{C}ech nerve of the cover $\mathcal{U}$, $\check{\mathrm{C}} \left(\mathcal{M};r\right)$, is homotopy equivalent to the subset $\mathcal{M}$. Also, for $0<r'\leq r$ the inclusion $\iota:\check{\mathrm{C}} \left(\mathcal{M};r'\right)\hookrightarrow\check{\mathrm{C}} \left(\mathcal{M};r\right)$ induces a homotopy equivalence.
\begin{proof}
    Since the cover $\mathcal{U}$ is a good cover of $\mathcal{M}^r$, the nerve of $ \mathcal{U} $ is homotopy equivalent to $\mathcal{M}^r$. From the preceding theorem we have $\check{\mathrm{C}} \left(\mathcal{M};r\right)= N \left(\mathcal{U}\right)\simeq \bigcup\limits_{x\in\mathcal{M}}\mathbb{B} \left(x,r\right)=\mathcal{M}^r\simeq \mathcal{M}$. Note that the cover $\{\mathbb{B} \left(x,r'\right)\}_{x\in \mathcal{M}}$ is a refinement of $\mathcal{U}$ for $r'<r$. Thus, from the Nerve Lemma on paracompact topological spaces (\cite{Borsuk1948,Alexandroff1928,MR1867354}), the inclusion between the \v{C}ech nerves, $\iota:\check{\mathrm{C}} \left(\mathcal{M},r'\right)\hookrightarrow\check{\mathrm{C}} \left(\mathcal{M},r\right)$ induces a homotopy equivalence.
\end{proof}
\end{lemma}

The nerve of the cover $\left\{\mathbb{B}\left(x,r\right)\right\}_{x\in \mathrm{A}}$ for a subset $\mathrm{A}\subset\mathcal{M}$ may not have trivial $d$-homology even when it does not cover $\mathcal{M}$.  In \cite{kim2019homotopy} the authors prove a Nerve Lemma for the intrinsic \v{C}ech complexes of manifolds. We use this result to prove that if the cover $\left\{\mathbb{B}\left(x,r\right)\cap\mathcal{M}\right\}_{x\in \mathrm{A}}$ does not contain $\mathcal{M}$, then the $d$-homology of the \v{C}ech complex is trivial ($d$ being the dimension of $\mathcal{M}$). We use the following lemma from \cite{kim2019homotopy}.

\begin{lemma}[Theorem 9 \cite{kim2019homotopy}]\label{lemma:contractible}
 Let $\mathcal{M}\subset\mathbb{R}^n$ be a compact Euclidean submanifold with reach $\operatorname{r}_1(\mathcal{M})>0$ and $\left\{\mathbb{B}(x_i,r_i)\right\}_{0\leq i\leq k}$ be a finite collection of balls of radii $0<r_i<\sqrt{2}\operatorname{r}_1(\mathcal{M})$ centered at $x_i\in\mathcal{M}\;\;\forall\;\;0\leq i \leq k$. Then $\bigcap\limits_{i=0}^k \mathbb{B}(x_i,r_i)\cap\mathcal{M}$ is contractible whenever it is nonempty.
\end{lemma}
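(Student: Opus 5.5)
The plan is a nonsmooth Morse-theoretic argument on $\mathcal{M}$, rather than a direct nerve-type construction. Set $K=\bigcap_{i=0}^k\mathbb{B}(x_i,r_i)$ and define
$$f(y)=\max_{0\leq i\leq k}\frac{\lVert y-x_i\rVert^2}{r_i^2},$$
so that $K\cap\mathcal{M}=\{f<1\}\cap\mathcal{M}$. Restricted to $\mathcal{M}$, $f$ is a maximum of finitely many smooth functions (as smooth as $\mathcal{M}$, and $C^{1,1}$ in the worst case). I will use the generalized (Clarke) gradient of $f|_{\mathcal{M}}$. At $y\in\mathcal{M}$ it is the convex hull of the tangential projections $\pi_{T_y\mathcal{M}}\bigl(2(y-x_i)/r_i^2\bigr)$ over the \emph{active} indices $i$, namely those attaining the maximum.

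Call $y$ \emph{critical} if $0$ lies in this hull. Equivalently, there are weights $\lambda_i\geq 0$ on the active indices with $\sum_i\lambda_i=1$ such that $v=\sum_i\lambda_i(y-x_i)/r_i^2$ is normal to $\mathcal{M}$ at $y$. The key claim is that $f|_{\mathcal{M}}$ has no critical point $y$ with $f(y)<1$ other than points of its global minimum set, and that this minimum set is contractible (ideally a single point).

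To prove the claim I will use Federer's reach inequality. For $x,y\in\mathcal{M}$ and any unit normal $n\in T_y\mathcal{M}^\perp$,
$$|\langle x-y,n\rangle|\leq\frac{\lVert x-y\rVert^2}{2\operatorname{r}_1}.$$
Suppose $y$ is critical with $v\neq 0$, and put $n=v/\lVert v\rVert$. Pairing $v$ with itself gives
$$\lVert v\rVert=\Bigl\langle\sum_i\frac{\lambda_i}{r_i^2}(y-x_i),\,n\Bigr\rangle\leq\sum_i\frac{\lambda_i}{r_i^2}\cdot\frac{\lVert y-x_i\rVert^2}{2\operatorname{r}_1}=\frac{f(y)}{2\operatorname{r}_1}.$$
On the other hand, at an active index we have $\lVert y-x_i\rVert^2=f(y)r_i^2$. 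Since $K\cap\mathcal{M}\neq\emptyset$ and $K$ is convex, I expect a lower bound for $\lVert v\rVert^2$ in terms of $f(y)$, the radii, and how far $f(y)$ sits above the minimum of $f$ over $K$. Comparing the two bounds should force $f(y)\geq 1$ as soon as $\max_i r_i<\sqrt2\,\operatorname{r}_1$. The constant $\sqrt2$ should appear from $r^2/(2\operatorname{r}_1)$ against $\lVert v\rVert\geq r/\ldots$, that is, from $r^2<2\operatorname{r}_1^2$. The degenerate case $v=0$ means $y$ lies at the weighted barycenter of active centers that are all equidistant from it. I will treat that case by showing such a point is a strict local minimum, again using the reach inequality.

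This comparison is the step I expect to be the main obstacle: getting exactly the threshold $\sqrt2\operatorname{r}_1$ rather than something weaker like $2\operatorname{r}_1$ or $\operatorname{r}_1$. My first attempt will be to expand $\lVert v\rVert^2=\sum_{i,j}\lambda_i\lambda_j\langle y-x_i,y-x_j\rangle/(r_i^2r_j^2)$ using the polarization identity. I will then bound the cross terms below using $\lVert x_i-x_j\rVert$ and the existence of a common point of $K\cap\mathcal{M}$.

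Granting the claim, I will build a locally Lipschitz pseudo-gradient vector field on $\{f<1\}\cap\mathcal{M}$ away from the minimum set. Its flow strictly decreases $f$, so it deformation retracts the sublevel set $\{f<1\}\cap\mathcal{M}$ onto an arbitrarily small sublevel neighbourhood of the minimum set, which is compact by compactness of $\mathcal{M}$. Uniqueness of the minimizer near the bottom then follows from the same reach inequality applied to two putative minimizers: the chord between them is nearly normal, which contradicts the inequality at this scale. Finally, a small sublevel set around a unique nondegenerate minimum is a topological ball, and hence contractible. The union of flow lines then shows that $K\cap\mathcal{M}$ is contractible.
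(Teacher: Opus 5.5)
\textbf{There is a genuine gap.} The paper gives no proof of this lemma; it quotes it as Theorem~9 of \cite{kim2019homotopy}. Your nonsmooth Morse framework (the function $f$, its Clarke gradient, a pseudo-gradient flow) is a legitimate scaffold, and $\lVert v\rVert\le f(y)/(2\operatorname{r}_1)$ is correctly derived from Federer's inequality. But the whole proof rests on the key claim, and the route you propose to it cannot work.

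You want every critical point with $v\neq0$ to satisfy $f(y)\ge1$. The minimizer of $f|_{\mathcal{M}}$ is typically such a point. Take the unit circle ($\operatorname{r}_1=1$), $x_0=(\cos a,\sin a)$, $x_1=(-\cos a,\sin a)$ with $0<a<\pi/2$, and $r_0=r_1=r$ with $2-2\sin a<r^2<2$. Then:
\begin{itemize}
\item the minimizer $y=(0,1)$ of $f|_{\mathcal{M}}$ has both indices active and $f(y)=(2-2\sin a)/r^2<1$;
\item $v=(0,1-\sin a)/r^2\neq0$ is normal, and $\lVert v\rVert=f(y)/(2\operatorname{r}_1)$ holds with equality;
\item $f(y)-\min_K f=(1-\sin a)^2/r^2>0$.
\end{itemize}
So no lower bound on $\lVert v\rVert$ built from $f(y)$, the radii and $f(y)-\min_K f$ can give a contradiction. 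First-order data do not separate a minimum from any other critical point.

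The comparison also never sees $\sqrt2$: with one active index it only shows that critical points of $\lVert\cdot-x_0\rVert$ on $\mathcal{M}$ other than $x_0$ lie at distance $\ge2\operatorname{r}_1$. At the true threshold, what fails is connectedness of the minimum set, not the appearance of a spurious critical point. With centers $(\pm1,0)$ and $r$ slightly above $\sqrt2$, $K\cap\mathcal{M}$ is two arcs with minimizers $(0,\pm1)$. Their chord is exactly normal and satisfies Federer's inequality with equality. Your one-line uniqueness step (``the chord between them is nearly normal, which contradicts the inequality at this scale'') derives neither the near-normality nor any contradiction. The strict hypotheses $r_i<\sqrt2\operatorname{r}_1$ and $f<1$ would have to enter quantitatively, and nothing in the proposal shows how.

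\textbf{The missing idea is second order.} For a unit-speed geodesic $\gamma$ of $\mathcal{M}$, $\gamma''$ is normal with $\lVert\gamma''\rVert\le 1/\operatorname{r}_1$ (a.e.\ in the $C^{1,1}$ case). Federer's bound on the normal component of $\gamma-x_i$ then gives
\[
\frac{d^2}{dt^2}\lVert\gamma(t)-x_i\rVert^2=2+2\langle\gamma'',\gamma-x_i\rangle\ge 2-\frac{\lVert\gamma-x_i\rVert^2}{\operatorname{r}_1^2},
\]
which is positive exactly when $\lVert\gamma-x_i\rVert<\sqrt2\operatorname{r}_1$; it is an equality on the circle. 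This makes $f|_{\mathcal{M}}$ locally strictly geodesically convex near $\overline{K}\cap\mathcal{M}$. Hence every critical point in $\{f<1\}\cap\mathcal{M}$ is a strict local minimum, and each connected component of $K\cap\mathcal{M}$ can then be shown contractible by your flow.

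Local convexity alone does not make $K\cap\mathcal{M}$ connected. You still need a global statement, for instance that minimizing geodesics of $\mathcal{M}$ between points of $\mathbb{B}(x_i,r_i)\cap\mathcal{M}$ stay in the ball (geodesic convexity). The proposal contains no argument of that kind.

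A minor point: when $v=0$, $y$ is the unconstrained minimizer of the convex function $f$ on $\mathbb{R}^n$, hence trivially the global minimizer on $\mathcal{M}$. No reach argument is needed there, and the active centers are equidistant from $y$ only after rescaling by the $r_i$.
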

Using these lemmas we have the following lemma (see Corollary 10 (\cite{kim2019homotopy})) for intrinsic \v{C}ech complexes of a subset of a submanifold.
\begin{lemma}\label{lemma:obstruct}
    Let $\mathcal{M}\subset\mathbb{R}^n$ be a compact connected $d$-dimensional Euclidean submanifold with reach $\operatorname{r}_1(\mathcal{M})>0$, $\mathrm{A}\subset\mathcal{M}$ be a subset and $\mathcal{U}^{\mathcal{M}}_{r}$ be the cover $\left\{\mathbb{B}(x,r)\cap\mathcal{M}\right\}_{x\in\mathrm{A}}$ with $0<r<\sqrt2\operatorname{r}_1$. If  \hspace{1pt} $\mathcal{U}^{\mathcal{M}}_{r}$ does not cover $\mathcal{M}$, then $\mathrm{H}_d\left(\check{\mathrm{C}}_\mathcal{M}\left(\mathrm{A};r\right);\mathbb{Z}_2\right)=0$.
\begin{proof}
    From Lemma \ref{lemma:contractible} the cover $\mathcal{U}^{\mathcal{M}}_{r}$ is a good open cover. Thus the nerve of the cover that is, the \v{C}ech complex $\check{\mathrm{C}}_\mathcal{M}\left(\mathrm{A};r\right)$ is homotopy equivalent to the union $\bigcup\limits_{x\in\mathrm{A}}\mathbb{B}(x,r)\cap\mathcal{M}$. From the assumption that $\mathcal{U}^{\mathcal{M}}_{r}$ does not cover $\mathcal{M}$, $\bigcup\limits_{x\in\mathrm{A}}\mathbb{B}(x,r)\cap\mathcal{M}$ is a proper open subset of $\mathcal{M}$. Therefore using Lemma \ref{lemma:contra},\\ $\mathrm{H}_d\left(\check{\mathrm{C}}_\mathcal{M}\left(\mathrm{A};r\right);\mathbb{Z}_2\right)=\mathrm{H}_d\left(\bigcup\limits_{x\in\mathrm{A}}\mathbb{B}(x,r)\cap\mathcal{M};\mathbb{Z}_2\right)=0$.

    \end{proof}
\end{lemma}
The Nerve lemma for intrinsic \v{C}ech complexes allows the covering balls to have different radii. It identifies the homotopy type of the intrinsic \v{C}ech complex when the cover given by intrinsic balls with centers on $\mathrm{A}$ covers $\mathcal{M}$ and leads to the vanishing of the top homology when it does not.
\begin{lemma}\label{lemma:mapzero}
  Let $\mathcal{M}\subset\mathbb{R}^n$ be a compact connected $d$-dimensional Euclidean submanifold with reach $\operatorname{r}_1(\mathcal{M})>0$, $\mathrm{A}\subset\mathcal{M}$ a subset and $\mathcal{U}^{\mathcal{M}}_{r_\mathrm{A}}$ the cover $\left\{\mathbb{B}(x,r_x)\cap\mathcal{M}\right\}_{x\in\mathrm{A}}$ with $0<r_x<\sqrt2\operatorname{r}_1$. If $r_x<\operatorname{r}_1(\mathcal{M})$ for all $x$ and  $\mathcal{U}^{\mathcal{M}}_{r_\mathrm{A}}$ does not cover $\mathcal{M}$, then $\mathrm{H}_d\left(\check{\mathrm{C}}_\mathcal{M}\left(\mathrm{A};r_\mathrm{A}\right);\mathbb{Z}_2\right)=0$. If $r_x<\operatorname{r}_1(\mathcal{M})$ for all $x$ and  $\mathcal{U}^{\mathcal{M}}_{r_\mathrm{A}}$ covers $\mathcal{M}$, then $\check{\mathrm{C}}_\mathcal{M}\left(\mathrm{A};r_\mathrm{A}\right)\simeq\mathcal{M}$. Here, $\check{\mathrm{C}}_\mathcal{M}\left(\mathrm{A};r_\mathrm{A}\right)=\{\sigma=[z_1\dots z_k] \mid z_i\in \mathrm{A} \;\forall\; i\in\{1,\dots,k\}\;\forall k\in\mathbb{Z}^{>0}, \bigcap\limits_{i=1}^k\mathbb{B}(z_i,r_{z_i})\cap\mathcal{M}\neq \emptyset\}$ is the nerve of the cover $\mathcal{U}^{\mathcal{M}}_{r_\mathrm{A}}$.
\begin{proof}
    By Lemma \ref{lemma:contractible} every nonempty finite intersection $\bigcap\limits_{i=1}^k\mathbb{B}(z_i,r_{z_i})\cap\mathcal{M}$ is contractible, so $\mathcal{U}^{\mathcal{M}}_{r_\mathrm{A}}$ is a good open cover and its nerve $\check{\mathrm{C}}_\mathcal{M}\left(\mathrm{A};r_\mathrm{A}\right)$ is homotopy equivalent to the union $\bigcup\limits_{x\in\mathrm{A}}\mathbb{B}(x,r_x)\cap\mathcal{M}$. If this union equals $\mathcal{M}$, then $\check{\mathrm{C}}_\mathcal{M}\left(\mathrm{A};r_\mathrm{A}\right)\simeq\mathcal{M}$. Otherwise the union is a proper open subset of $\mathcal{M}$ and Lemma \ref{lemma:contra} gives $\mathrm{H}_d\left(\check{\mathrm{C}}_\mathcal{M}\left(\mathrm{A};r_\mathrm{A}\right);\mathbb{Z}_2\right)=\mathrm{H}_d\left(\bigcup\limits_{x\in\mathrm{A}}\mathbb{B}(x,r_x)\cap\mathcal{M};\mathbb{Z}_2\right)=0$.
\end{proof}
\end{lemma}

\section{Inequalities using intrinsic \v{C}ech complexes}\label{sec:intrincech}
Throughout this section $\mathcal{M}\subset\mathbb{R}^n$ is a closed, connected, $d$-dimensional submanifold of positive reach $\operatorname{r}_1(\mathcal{M})>0$ and $\mathrm{A}\subset\mathcal{M}$ is a subset. In this section we prove a lower bound for the Gromov-Hausdorff distance between such a submanifold and a subset, in terms of their Hausdorff distance, without any further assumption on the subset. The argument uses Jung's theorem to produce, from a correspondence of small distortion, simplicial maps interleaving the ambient \v{C}ech complex of $\mathcal{M}$ with the intrinsic \v{C}ech complex of $\mathrm{A}$.
\begin{thm}[Jung's theorem (\cite{Jung1901})]\label{theorem:Jung}
    Let $\mathrm{K}\subset\mathbb{R}^n$ be a compact set with diameter $\operatorname{diam}(\mathrm{K}) = \sup\limits_{x,y\in\mathrm{K}}\operatorname{d}(x,y)$. There exists a closed ball with radius less than or equal to $\sqrt{\frac{n}{2(n+1)}}\operatorname{diam}(\mathrm{K})$ containing $\mathrm{K}$.
\end{thm}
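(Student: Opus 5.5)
The plan is the classical minimal-enclosing-ball argument. By compactness of $\mathrm{K}$, the function $c\mapsto\max_{x\in\mathrm{K}}\operatorname{d}(c,x)$ is continuous and coercive on $\mathbb{R}^n$, so it attains a minimum $R$ at some center $c$. Translate so that $c=0$, and write $\mathrm{K}_0=\{x\in\mathrm{K}\mid \lVert x\rVert=R\}$ for the nonempty compact set of contact points. It suffices to show $R\leq\lambda_n\operatorname{diam}(\mathrm{K})$, since the closed ball $\overline{\mathbb{B}}(c,R)$ contains $\mathrm{K}$.

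The first key step is to show that $0\in\operatorname{conv}(\mathrm{K}_0)$. Suppose not. Then, since $\operatorname{conv}(\mathrm{K}_0)$ is compact and convex, a separating hyperplane gives a unit vector $u$ and $\delta>0$ with $\langle u,x\rangle\geq\delta$ for all $x\in\mathrm{K}_0$. Moving the center to $tu$ for small $t>0$ gives
$$\lVert x-tu\rVert^2=R^2-2t\langle u,x\rangle+t^2<R^2$$
for all $x$ in a neighborhood of $\mathrm{K}_0$ in $\mathrm{K}$. On the remaining compact part of $\mathrm{K}$ the norm is bounded by some $R'<R$, so those points also stay strictly inside for small $t$. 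This contradicts the minimality of $R$. I expect this step to be the main obstacle, because the compactness bookkeeping (splitting $\mathrm{K}$ into a neighborhood of $\mathrm{K}_0$ and its complement) has to be done carefully. Once it is done, Carathéodory's theorem lets us write $0=\sum_{i=0}^m\lambda_ip_i$ with $p_i\in\mathrm{K}_0$ distinct, $\lambda_i>0$, $\sum_i\lambda_i=1$ and $m\leq n$.

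The second step is an averaging computation. For each fixed $j$ we have
$$\sum_i\lambda_i\lVert p_i-p_j\rVert^2=\sum_i\lambda_i\left(2R^2-2\langle p_i,p_j\rangle\right)=2R^2,$$
because $\sum_i\lambda_ip_i=0$. Multiplying by $\lambda_j$ and summing over $j$ gives
$$2R^2=\sum_{i\neq j}\lambda_i\lambda_j\lVert p_i-p_j\rVert^2\leq\operatorname{diam}(\mathrm{K})^2\Big(1-\sum_i\lambda_i^2\Big).$$
Here the inequality uses $\sum_{i\neq j}\lambda_i\lambda_j=1-\sum_i\lambda_i^2$. By Cauchy–Schwarz, $\sum_i\lambda_i^2\geq\frac{1}{m+1}$, so the right-hand side is at most $\frac{m}{m+1}\operatorname{diam}(\mathrm{K})^2$. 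Since $m\leq n$, this is at most $\frac{n}{n+1}\operatorname{diam}(\mathrm{K})^2$. Therefore
$$R^2\leq\frac{n}{2(n+1)}\operatorname{diam}(\mathrm{K})^2,$$
which is exactly the bound $R\leq\lambda_n\operatorname{diam}(\mathrm{K})$.
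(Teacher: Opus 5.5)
Your proof is correct and complete. The paper does not prove this statement: it quotes Jung's theorem with a citation to \cite{Jung1901} and uses it as a black box, so there is no argument in the paper to compare yours against.

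What you give is the classical minimal-enclosing-ball proof. Its steps are existence of an optimal center by continuity and coercivity, a perturbation argument placing that center in the convex hull of the contact set, Carath\'eodory to reduce to at most $n+1$ contact points, and the identity $\sum_i\lambda_i\lVert p_i-p_j\rVert^2=2R^2$. Your weighted double sum with Cauchy--Schwarz is equivalent to the more common finish. There one bounds $2R^2\leq(1-\lambda_j)\operatorname{diam}(\mathrm{K})^2$ for each $j$ and sums over $j$. Both routes give $R^2\leq\frac{m}{2(m+1)}\operatorname{diam}(\mathrm{K})^2$.

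There is one small slip in the perturbation step. For $x$ in a neighborhood of $\mathrm{K}_0$ you only know $\lVert x\rVert^2\leq R^2$, so the displayed equality should be an inequality. The compactness bookkeeping you flagged is then immediate:
\begin{itemize}
\item Take the neighborhood to be $\{x\in\mathrm{K}\mid\langle u,x\rangle>\delta/2\}$. There $\lVert x-tu\rVert^2\leq R^2-t\delta+t^2<R^2$ for $0<t<\delta$.
\item On the compact remainder $\lVert x-tu\rVert\leq R'+t$, which is less than $R$ for small $t$.
\end{itemize}

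Your argument also has a side benefit. It shows the constant depends on $m$, and $m$ is at most the dimension of the affine hull of $\mathrm{K}$, since the optimal center lies in $\operatorname{conv}(\mathrm{K}_0)$ and Carath\'eodory can be applied in that hull. This directly justifies the paper's use of $\lambda_d$ for point sets lying in a $d$-dimensional affine plane, namely the tangent plane in Lemma \ref{lemma:gamma}.
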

\noindent We will also require the following result from \cite{kim2019homotopy},
\begin{lemma}[Lemma 33 and 34 in \cite{kim2019homotopy}]\label{lemma:estimate}
    Let $\mathcal{M}\subset\mathbb{R}^n$ be a compact Euclidean submanifold with reach $\operatorname{r}_1(\mathcal{M})>0$ then for $x\in\mathcal{M}$ and $y\in\mathbb{R}^n$ such that $\operatorname{d}_\mathcal{M}(y)<\operatorname{r}_1(\mathcal{M})$ we have,
    \begin{align*}
    & 1.) \hspace{1cm}\operatorname{d}(x,\pi_\mathcal{M}(y))\leq\sqrt{\frac{\operatorname{r}_1(\mathcal{M})}{\operatorname{r}_1(\mathcal{M})-\operatorname{d}_\mathcal{M}(y)}\left(\operatorname{d}(x,y)^2-\operatorname{d}_\mathcal{M}(y)^2\right)}\leq \sqrt{\frac{\operatorname{r}_1(\mathcal{M})}{\operatorname{r}_1(\mathcal{M})-\operatorname{d}_\mathcal{M}(y)}}\operatorname{d}(x,y)\\
    & 2.) \hspace{1cm}\operatorname{d}(x,\pi_\mathcal{M}(y))\leq \sqrt{\frac{2\operatorname{r}_1(\mathcal{M})}{\operatorname{r}_1(\mathcal{M})+\sqrt{\operatorname{r}_1(\mathcal{M})^2-\operatorname{d}(x,y)^2}}}\operatorname{d}(x,y) \text{\;\;\;\;if $\operatorname{d}(x,y)<\operatorname{r}_1(\mathcal{M})$}
    \end{align*}
    
\end{lemma}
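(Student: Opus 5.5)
The plan is to derive both estimates from a single inequality characterizing positive reach, and then optimize over the unknown distance $\operatorname{d}_\mathcal{M}(y)$. Fix $x\in\mathcal{M}$ and $y$ with $t:=\operatorname{d}_\mathcal{M}(y)<\operatorname{r}_1$. Set $p=\pi_\mathcal{M}(y)$ and $D=\operatorname{d}(x,y)$, and assume $t>0$ (if $t=0$ then $p=y$ and everything is trivial). Let $u=(y-p)/t$ be the unit normal direction at $p$.

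\textbf{Key geometric input (Federer's reach inequality).} I will use that for every $x\in\mathcal{M}$,
$$\langle x-p,u\rangle\leq \frac{\lVert x-p\rVert^2}{2\operatorname{r}_1}.$$
This is equivalent to saying that the open ball $\mathbb{B}(p+su,s)$ contains no point of $\mathcal{M}$ for every $s<\operatorname{r}_1$. That in turn holds because each point $p+su$ with $s<\operatorname{r}_1$ has $p$ as its unique nearest point on $\mathcal{M}$, which is the standard consequence of the definition of reach (Definition \ref{def:reach}) used in \cite{federer1959curvature}. Letting $s\uparrow \operatorname{r}_1$ and expanding $\lVert x-p-su\rVert^2\geq s^2$ gives the displayed inequality. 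I expect this step to be the main obstacle. The fact that the points $p+su$ project to $p$ all the way up to $\operatorname{r}_1$, and not merely up to $t$, is exactly Federer's result, and I would cite it rather than reprove it.

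\textbf{Part 1.} Expanding the square,
$$D^2=\lVert x-p\rVert^2+t^2-2t\langle x-p,u\rangle\geq \lVert x-p\rVert^2\Bigl(1-\frac{t}{\operatorname{r}_1}\Bigr)+t^2 .$$
Rearranging gives $\lVert x-p\rVert^2\leq \frac{\operatorname{r}_1}{\operatorname{r}_1-t}(D^2-t^2)$, which is the first inequality. Dropping the term $-t^2$ gives the second.

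\textbf{Part 2.} Since $x\in\mathcal{M}$ we have $t\leq D$, so it suffices to bound $f(t)=\operatorname{r}_1(D^2-t^2)/(\operatorname{r}_1-t)$ over $t\in[0,D]$ when $D<\operatorname{r}_1$. The derivative of $f$ has the sign of $t^2-2\operatorname{r}_1t+D^2$. Hence $f$ increases and then decreases, with maximum at $t^*=\operatorname{r}_1-s$, where $s=\sqrt{\operatorname{r}_1^2-D^2}$; note $t^*\in[0,D]$. Using $t^{*2}=2\operatorname{r}_1t^*-D^2$, one checks $D^2-t^{*2}=2s(\operatorname{r}_1-s)$ and $\operatorname{r}_1-t^*=s$. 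Therefore
$$f(t^*)=2\operatorname{r}_1(\operatorname{r}_1-s)=\frac{2\operatorname{r}_1D^2}{\operatorname{r}_1+s}.$$
Taking square roots yields the second estimate.
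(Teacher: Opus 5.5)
Your proof is correct. The inequality $\langle x-p,u\rangle\le \lVert x-p\rVert^2/(2\operatorname{r}_1)$ is Federer's normal-vector inequality from \cite{federer1959curvature}, which you correctly derive from the unique projection along the normal ray $p+su$, $s<\operatorname{r}_1$. Expanding $\lVert x-y\rVert^2$ and then maximizing $\operatorname{r}_1(D^2-t^2)/(\operatorname{r}_1-t)$ over $t\in[0,D]$ (including $t^*\in[0,D]$ and the value $2\operatorname{r}_1D^2/(\operatorname{r}_1+s)$) both check out. The paper gives no proof of its own but cites \cite{kim2019homotopy}, and your route appears to be essentially the standard argument behind those estimates.
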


\begin{lemma}[Correspondences and intrinsic \v{C}ech complex]\label{lemma:intrinsicCech}
    Consider a Euclidean submanifold $\mathcal{M}\subset\mathbb{R}^n$ with positive reach $\operatorname{r}_1>0$ and a subset $\mathrm{A}\subset\mathcal{M}$. For $r>2 \operatorname{d}_{\mathrm{GH}} \left(\mathrm{A},\mathcal{M}\right)$ and $\epsilon<\operatorname{r}_1(\mathcal{M})$ there exist simplicial maps $\bar{h},\bar{g}$,
$$\check{\mathrm{C}} \left(\mathcal{M};\epsilon\right)\xrightarrow[\hspace{1cm}]{\bar{h}} \check{\mathrm{C}}_\mathcal{M} \left(\mathrm{A};r+\sqrt{\frac{\operatorname{r}_1}{\operatorname{r}_1-\epsilon}}\,\epsilon\right)\xrightarrow[\hspace{1cm}]{\bar{g}}\check{\mathrm{C}} \left(\mathcal{M};\frac{3}{\sqrt2}r+\sqrt{\frac{2\operatorname{r}_1}{\operatorname{r}_1-\epsilon}}\,\epsilon\right)$$
and the composition $\bar{g}\circ\bar{h}$ is contiguous to inclusion
$\iota:\check{\mathrm{C}} \left(\mathcal{M};\epsilon\right)\xhookrightarrow[\hspace{0.5cm}]{}\check{\mathrm{C}} \left(\mathcal{M};\frac{3}{\sqrt2}r+\sqrt{\frac{2\operatorname{r}_1}{\operatorname{r}_1-\epsilon}}\,\epsilon\right)$.
\begin{proof}
As $r>2\operatorname{d}_{\mathrm{GH}}(\mathrm{A},\mathcal{M})$ there exists a correspondence $\mathrm{C}\subset\mathrm{A}\times\mathcal{M}$ with $2\operatorname{d}_{\mathrm{GH}}(\mathrm{A},\mathcal{M})<\operatorname{dis}(\mathrm{C})<r$ thus, there exists maps $h:\mathcal{M}\to \mathrm{A}$ and $g:\mathrm{A}\to \mathcal{M}$, such that, $2\operatorname{d}_{\mathrm{GH}}(\mathrm{A},\mathcal{M})<\operatorname{dis}(h)<r$ and $2\operatorname{d}_{\mathrm{GH}}(\mathrm{A},\mathcal{M})<\operatorname{dis}(g)<r$. Seen as a vertex map we show that $h$ induces a simplicial map.
For a simplex $\sigma = [z_1,\dots,z_k]$ in $\check{\mathrm{C}} \left(\mathcal{M};\epsilon\right)$, there exists a ball in $\mathbb{R}^n$, such that $\sigma \subset \mathbb{B} \left(a_{\sigma},\epsilon\right)$ hence, $\operatorname{d} \left(a_\sigma,z_j\right)<\epsilon\;\;\forall j \in\{1,2,\dots,k\}$. As $\epsilon<\operatorname{r}_1$ from Lemma \ref{lemma:estimate} we have $\operatorname{d}(\pi_\mathcal{M}(a_\sigma),z_i)<\sqrt{\frac{\operatorname{r}_1}{\operatorname{r}_1-\epsilon}}\,\epsilon$. As $\operatorname{dis} \left(h\right)<r$, for all $1\leq i \leq k$
    \begin{align*}
        &|\operatorname{d} \left(h\circ\pi_{\mathcal{M}}(a_\sigma),h(z_i)\right)-\operatorname{d} \left(\pi_{\mathcal{M}}(a_\sigma),z_i\right)|<r,\\
        &\operatorname{d} \left(h\circ\pi_{\mathcal{M}}(a_\sigma),h(z_i)\right) < r+ \operatorname{d} \left(\pi_{\mathcal{M}}(a_\sigma),z_i\right)
    \end{align*}
Thus, $\bar{h}(\sigma)\subset\mathbb{B}(h\circ\pi_{\mathcal{M}}(a_\sigma),r+\sqrt{\frac{\operatorname{r}_1}{\operatorname{r}_1-\epsilon}}\,\epsilon)$. As $\pi_\mathcal{M}(a_\sigma)\in\mathcal{M}$ we also have $h\circ\pi_\mathcal{M}(a_\sigma)\in\mathrm{A}\subset\mathcal{M}$, therefore, $\bar{h}(\sigma)\in \check{\mathrm{C}}_\mathcal{M} \left(\mathrm{A};r+\sqrt{\frac{\operatorname{r}_1}{\operatorname{r}_1-\epsilon}}\epsilon\right)$.
Now for any $\tau=[y_1,\dots,y_k]\in  \check{\mathrm{C}}_\mathcal{M} \left(\mathrm{A};r+\sqrt{\frac{\operatorname{r}_1}{\operatorname{r}_1-\epsilon}}\epsilon\right)$
there exists $b_\tau\in\mathcal{M}\subset\mathbb{R}^n$ such that $\operatorname{d}(b_\tau,y_i)<r+\sqrt{\frac{\operatorname{r}_1}{\operatorname{r}_1-\epsilon}}\,\epsilon$ for each $1\leq i\leq k$, thus $\operatorname{d}(y_i,y_j)< 2\left(r+\sqrt{\frac{\operatorname{r}_1}{\operatorname{r}_1-\epsilon}}\,\epsilon\right)$ for all $1\leq i,j\leq k$. As $\operatorname{dis}(g)<r$ we also have $\operatorname{d}\left(g(y_i),g\left(y_j\right)\right)<3r+2\sqrt{\frac{\operatorname{r}_1}{\operatorname{r}_1-\epsilon}}\,\epsilon$. By Jung's theorem (Theorem \ref{theorem:Jung}) there exists a $y\in\mathbb{R}^n$ such that $\operatorname{d}(g(y_i),y)<\frac{1}{\sqrt2}(3r+2\sqrt{\frac{\operatorname{r}_1}{\operatorname{r}_1-\epsilon}}\,\epsilon)$. Thus $\bar{g}(\tau)\in\check{\mathrm{C}} \left(\mathcal{M};\frac{3}{\sqrt2}r+\sqrt{\frac{2\operatorname{r}_1}{\operatorname{r}_1-\epsilon}}\,\epsilon\right)$.
Also note that, for $\sigma=[z_1,\dots,z_k]\in \check{\mathrm{C}} \left(\mathcal{M};\epsilon\right)$ and $a_\sigma$ as above, for each $1\leq i\leq k$, we have $(h(z_i),g\circ h(z_i)),(h\circ\pi_{\mathcal{M}}(a_\sigma),\pi_{\mathcal{M}}(a_\sigma))\in \mathrm{C}$. Thus, for all $1\leq i\leq k$,
\begin{align*}
    &\operatorname{d}(\pi_{\mathcal{M}}(a_\sigma),g\circ h(z_i))<\operatorname{d}(h\circ\pi_{\mathcal{M}}(a_\sigma),h(z_i))+r\\
    &\phantom{\operatorname{d}(\pi_{\mathcal{M}}(a_\sigma),g\circ h(z_i))}< 2r+\sqrt{\frac{\operatorname{r}_1}{\operatorname{r}_1-\epsilon}}\,\epsilon<\frac{3}{\sqrt2}r+\sqrt{\frac{2\operatorname{r}_1}{\operatorname{r}_1-\epsilon}}\,\epsilon
\end{align*}
Thus, $\iota(\sigma)\cup\bar{g}\circ\bar{h}(\sigma)\subset\mathbb{B}(\pi_{\mathcal{M}}(a_\sigma),\frac{3}{\sqrt2}r+\sqrt{\frac{2\operatorname{r}_1}{\operatorname{r}_1-\epsilon}}\,\epsilon)$. So the union  $\iota(\sigma)\cup\bar{g}\circ\bar{h}(\sigma)$ forms a simplex in the \v{C}ech complex $\check{\mathrm{C}} \left(\mathcal{M};\frac{3}{\sqrt2}r+\sqrt{\frac{2\operatorname{r}_1}{\operatorname{r}_1-\epsilon}}\,\epsilon\right)$. Thus, $\iota$ and $\bar{g}\circ\bar{h}$ are contiguous simplicial maps.
\end{proof}
\end{lemma}
We are now ready to prove the first result about lower bounding Gromov-Hausdorff distances with Hausdorff distances in Euclidean submanifolds.
\begin{thm}\label{thm:firstbound}
    Let $\mathcal{M}\subset\mathbb{R}^n$ be a closed connected $d$-dimensional submanifold of reach $\operatorname{r}_1(\mathcal{M})=\operatorname{r}_1>0$ and $\mathrm{A}$ be a subset of $\mathcal{M}$ then,
    $$\operatorname{d}_{\mathrm{GH}}(\mathrm{A},\mathcal{M})\geq\min\left\{\frac{1}{2}\operatorname{d}_{\mathrm{H}}(\mathrm{A},\mathcal{M}),\frac{\sqrt2}{6}\operatorname{r}_1(\mathcal{M})\right\}$$

\begin{proof}
    Assume that $\operatorname{d}_{\mathrm{GH}}(\mathrm{A},\mathcal{M})<\frac{\sqrt2}{6}\operatorname{r}_1$. Pick $r>0$ such that $2\operatorname{d}_{\mathrm{GH}}(\mathrm{A},\mathcal{M})<r<\frac{\sqrt2}{3}\operatorname{r}_1$ and let $\epsilon>0$ be small enough such that,  $\frac{3}{\sqrt2}r+\sqrt{\frac{2\operatorname{r}_1}{\operatorname{r}_1-\epsilon}}\,\epsilon<\operatorname{r}_1$ (Note that, $\sqrt{\frac{2\operatorname{r}_1}{\operatorname{r}_1-\epsilon}}\,\epsilon$, is an increasing function of $\epsilon$ and is 0 when $\epsilon=0$).
 As $r>2\operatorname{d}_{\mathrm{GH}}(\mathrm{A},\mathcal{M})$, from Lemma \ref{lemma:intrinsicCech} we have simplicial maps $\bar{h}$ and $\bar{g}$ such that
 $$\check{\mathrm{C}} \left(\mathcal{M};\epsilon\right)\xrightarrow[\hspace{1cm}]{\bar{h}} \check{\mathrm{C}}_\mathcal{M} \left(\mathrm{A};r+\sqrt{\frac{\operatorname{r}_1}{\operatorname{r}_1-\epsilon}}\,\epsilon\right)\xrightarrow[\hspace{1cm}]{\bar{g}}\check{\mathrm{C}} \left(\mathcal{M};\frac{3}{\sqrt2}r+\sqrt{\frac{2\operatorname{r}_1}{\operatorname{r}_1-\epsilon}}\,\epsilon\right)$$
   and $\bar{g}\circ\bar{h}$ is contiguous to the inclusion $\iota:\check{\mathrm{C}} \left(\mathcal{M};\epsilon\right)\xhookrightarrow[\hspace{0.5cm}]{}\check{\mathrm{C}} \left(\mathcal{M};\frac{3}{\sqrt2}r+\sqrt{\frac{2\operatorname{r}_1}{\operatorname{r}_1-\epsilon}}\,\epsilon\right)$. As $\frac{3}{\sqrt2}r+\sqrt{\frac{2\operatorname{r}_1}{\operatorname{r}_1-\epsilon}}\,\epsilon<\operatorname{r}_1$,  from Lemma \ref{lemma:nervekim}, we have, $\check{\mathrm{C}} \left(\mathcal{M};\epsilon\right)\simeq\check{\mathrm{C}} \left(\mathcal{M};\frac{3}{\sqrt2}r+\sqrt{\frac{2\operatorname{r}_1}{\operatorname{r}_1-\epsilon}}\,\epsilon\right)\simeq\mathcal{M}$ where the inclusion $\iota$ is a homotopy equivalence. As $\bar{g}\circ\bar{h}$  is contiguous to $\iota$ they induce the same isomorphism on homology $\iota_*=\bar{g}_*\circ\bar{h}_*$. Thus the fundamental class of $\mathcal{M}$, $[\mathcal{M}]\in\mathrm{H}_d(\mathcal{M},\mathbb{Z}_2)$ is mapped non-trivially by $\bar{g}_*\circ\bar{h}_*$. Now if, $r+\sqrt{\frac{\operatorname{r}_1}{\operatorname{r}_1-\epsilon}}\,\epsilon <\operatorname{d}_{\mathrm{H}}(\mathrm{A},\mathcal{M})$ then there exists $z\in\mathcal{M}$ such that $\operatorname{d}_\mathrm{A}(z)>r+\sqrt{\frac{\operatorname{r}_1}{\operatorname{r}_1-\epsilon}}\,\epsilon$ so the union $\bigcup\limits_{x\in \mathrm{A}}\mathbb{B}\left(x,r+\sqrt{\frac{\operatorname{r}_1}{\operatorname{r}_1-\epsilon}}\,\epsilon\right)\cap\mathcal{M}$ does not cover $\mathcal{M}$ and therefore is a proper open set of $\mathcal{M}$. From Lemma \ref{lemma:obstruct} we have $\mathrm{H}_d(\check{\mathrm{C}}_\mathcal{M} \left(\mathrm{A};r+\sqrt{\frac{\operatorname{r}_1}{\operatorname{r}_1-\epsilon}}\,\epsilon\right);\mathbb{Z}_2)=0$, which contradicts the fact that $\bar{g}_*\circ\bar{h}_*$ map the fundamental class non-trivially. Hence, $r+\sqrt{\frac{\operatorname{r}_1}{\operatorname{r}_1-\epsilon}}\,\epsilon \geq\operatorname{d}_{\mathrm{H}}(\mathrm{A},\mathcal{M})$ for all $2\operatorname{d}_{\mathrm{GH}}(\mathrm{A},\mathcal{M})<r<\frac{\sqrt2}{3}\operatorname{r}_1$ and $\epsilon>0$ small enough. Hence, $\operatorname{d}_{\mathrm{GH}}(\mathrm{A},\mathcal{M}) \geq\frac{1}{2}\operatorname{d}_{\mathrm{H}}(\mathrm{A},\mathcal{M})$.
\end{proof}
\end{thm}
\section{The case of the circle}\label{sec:circle}
    Before we further improve bounds in the theorem above and construct bounds for the Gromov-Hausdorff distance we try to get better bounds in the case where the underlying manifold is the circle. In \cite{adams2025lowerboundinggromovhausdorffdistance} the authors produce bounds for the circle (with the length space metric) using the Borsuk-Ulam theorem. We next extend this method to the circle seen as a metric subspace of $\mathbb{R}^2$. In \cite{adams2025lowerboundinggromovhausdorffdistance} the authors prove that the Gromov-Hausdorff distance of a subset of a circle from the circle is equal to its Hausdorff distance from the circle, we obtain a similar equality between the Gromov-Hausdorff distance and a function of the Hausdorff distance. In this section we prove these bounds.
\begin{Def}
For a continuous map $f:\mathbb{S}^1\to\mathbb{S}^1$, let the homomorphism induced on the homology groups with coefficients in $\mathbb{Z}$ be $f_*:\mathbb{Z}\cong\mathrm{H}_1(\mathbb{S}^1)\to\mathrm{H}_1(\mathbb{S}^1)\cong\mathbb{Z}$ and let $f_*(1)=\lambda\in\mathbb{Z}$ then $\lambda$ is said to be
the \textbf{degree of the map}, $\operatorname{deg}(f)$.
\end{Def}
\noindent We will also use the following property of maps between circles.
\begin{prop}[Problem 21 Chapter 9 \cite{Armstrong2010-jj}]\label{prop:degree}
If a continuous map $f:\mathbb{S}^1\to\mathbb{S}^1$ has even degree then there exists $x\in \mathbb{S}^1$ such that $f(x)=f(-x)$.
\end{prop}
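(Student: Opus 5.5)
We argue by contradiction using a lift of $f$ to the universal cover, and we reduce everything to a parity count. Identify $\mathbb{S}^1$ with $\{e^{i\theta}\mid\theta\in\mathbb{R}\}$, so that the antipode of $e^{i\theta}$ is $e^{i(\theta+\pi)}$.

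\emph{Lifting.} By the lifting property of the covering $\mathbb{R}\to\mathbb{S}^1$, $\theta\mapsto e^{i\theta}$, applied to $\theta\mapsto f(e^{i\theta})$, there is a continuous $F:\mathbb{R}\to\mathbb{R}$ with $f(e^{i\theta})=e^{iF(\theta)}$. Moreover $F(\theta+2\pi)-F(\theta)=2\pi\operatorname{deg}(f)$ for all $\theta$. This is the standard identification of the homological degree with the winding number of the lift; the difference is a continuous function with values in $2\pi\mathbb{Z}$, hence constant. Write $\operatorname{deg}(f)=2m$ with $m\in\mathbb{Z}$.

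\emph{The antipodal difference.} Define $G(\theta)=F(\theta+\pi)-F(\theta)$, which is continuous. Suppose, for contradiction, that $f(x)\neq f(-x)$ for every $x\in\mathbb{S}^1$. Then $e^{iG(\theta)}\neq1$ for all $\theta$, so $G$ never takes a value in $2\pi\mathbb{Z}$. Since $\mathbb{R}$ is connected, $G(\mathbb{R})$ is an interval avoiding $2\pi\mathbb{Z}$. Hence there is a fixed integer $k$ with $G(\theta)\in(2\pi k,2\pi(k+1))$ for all $\theta$.

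\emph{Parity contradiction.} The sum telescopes:
$$G(\theta)+G(\theta+\pi)=F(\theta+2\pi)-F(\theta)=4\pi m.$$
Each summand lies in $(2\pi k,2\pi k+2\pi)$, so the sum lies in the open interval $(4\pi k,4\pi k+4\pi)$. This forces $k<m<k+1$, which is impossible for integers $k,m$. Therefore some $\theta$ satisfies $G(\theta)\in2\pi\mathbb{Z}$, i.e.\ $f(x)=f(-x)$ for $x=e^{i\theta}$.

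The only step needing care is the lifting step, namely that the homological degree equals the integer $(F(\theta+2\pi)-F(\theta))/2\pi$. I would cite this standard fact rather than reprove it.
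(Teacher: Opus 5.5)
Your proof is correct and complete. The key step is the telescoping identity $G(\theta)+G(\theta+\pi)=F(\theta+2\pi)-F(\theta)=2\pi\operatorname{deg}(f)$. Combined with the fact that, under the contradiction hypothesis, $G$ lies in a single interval $(2\pi k,2\pi(k+1))$, it gives $k<\operatorname{deg}(f)/2<k+1$, which is impossible when the degree is even. The argument only needs the parity of the degree, so any sign convention in identifying the homological degree with $(F(\theta+2\pi)-F(\theta))/2\pi$ does no harm. Citing that identification is reasonable.

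The paper does not prove this proposition; it only cites the exercise in Armstrong, so there is no in-paper argument to compare against. For context, a standard route in a chapter on degree runs as follows. Suppose $f(x)\neq f(-x)$ for all $x$. The normalized map $k(x)=\frac{f(x)-f(-x)}{\lVert f(x)-f(-x)\rVert}$ is then odd. Moreover $k(x)\overline{f(x)}$ always has positive real part, because $\operatorname{Re}\bigl(f(-x)\overline{f(x)}\bigr)<1$. Hence $k\simeq f$, and since odd maps $\mathbb{S}^1\to\mathbb{S}^1$ have odd degree, so does $f$.

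That version delegates the core step to the odd-map (Borsuk--Ulam type) theorem. Yours effectively proves that theorem inline for $f$ using the lift, and needs only the covering-space description of degree.
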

\begin{figure}[htb]
\centering
\begin{subfigure}{0.45\textwidth}
\centering
\begin{tikzpicture}[scale=1]
\draw[dashed, lightgray] (-2, 0) -- (2, 0);
\draw[dashed, lightgray] (0, 2) -- (0, -2);
\draw[gray] (0,0) circle (2);
\path[draw,blue,thick,name=1] (210:2)--(270:2) node[midway,sloped,fill=white] (){\small$\epsilon$};
\path[draw,blue,thick,name=2] (270:2)--(330:2) node[midway,sloped,fill=white] (){\small$\epsilon$};
\path[draw,red,thick,name=3] (210:2)--(330:2) node[midway,sloped,fill=white] (){\small$\psi(\epsilon)$};
\end{tikzpicture}
\caption*{(a)}
\end{subfigure}\hfill
\begin{subfigure}{0.45\textwidth}
\centering
\begin{tikzpicture}[scale=1]
\draw[dashed, lightgray] (-2, 0) -- (2, 0);
\draw[dashed, lightgray] (0, 2) -- (0, -2);
\draw[gray] (0,0) circle (2);
\path[draw,red,thick,name=1] (190:2)--(270:2) node[midway,sloped,fill=white] (){\small$\psi^{-1}(r)$};
\path[draw,red,thick,name=2] (270:2)--(350:2) node[midway,sloped,fill=white] (){\small$\psi^{-1}(r)$};
\path[draw,blue,thick,name=3] (190:2)--(350:2) node[midway,sloped,fill=white] (){\small$r$};
\end{tikzpicture}
\caption*{(b)}
\end{subfigure}\hfill
\caption{The chord functions $\psi$ and $\psi^{-1}$, drawn on an enlarged circle. In (a) two chords of length $\epsilon$ subtend equal arcs and the chord spanning both has length $\psi(\epsilon)$. In (b) a chord of length $r$ spans two chords of length $\psi^{-1}(r)$, each subtending half its arc.}
\label{fig:psidef}
\end{figure}
\begin{Def}
For $\epsilon\in[0,\sqrt2]$ define $\psi:[0,\sqrt2]\to[0,2]$ where $\psi(\epsilon)= 2\sin(2\sin^{-1}(\epsilon/2))$ and its inverse $\psi^{-1}:[0,2]\to[0,\sqrt2]$ as $\psi^{-1}(r)= 2\sin\left(\frac{1}{2}\sin^{-1}\left(\frac{r}{2}\right)\right)$. These represent the lengths of chords as shown in Figure \ref{fig:psidef}. Note that both $\psi$ and $\psi^{-1}$ are increasing.
\end{Def}

\begin{thm}\label{theorem:circbu}
Let $\mathbb{S}^1\subset\mathbb{R}^2$ be the metric space denoting the unit circle equipped with the Euclidean metric. Then for any subset $\mathrm{A}\subset\mathbb{S}^1$ with $\operatorname{d}_{\mathrm{H}}\left(\mathrm{A},\mathbb{S}^1\right)\leq\sqrt2$ we have,
$$\operatorname{d}_{\mathrm{GH}}\left(\mathrm{A},\mathbb{S}^1\right)\geq\min\left\{\frac{1}{2}\psi\left(\operatorname{d}_{\mathrm{H}}\left(\mathrm{A},\mathbb{S}^1\right)\right),\frac{c_0}{2}\right\}$$ where $c_0\in(0,2)$ is the solution of the equation $\psi^{-1}(x)+x=2$ ($c_0\approx1.305$).
\begin{proof}
    Assume that $\operatorname{d}_{\mathrm{GH}}\left(\mathrm{A},\mathbb{S}^1\right)<\frac{c_0}{2}$. Then let $f:\mathbb{S}^1\to\mathrm{A}$ be any map such that $2\operatorname{d}_{\mathrm{GH}}\left(\mathrm{A},\mathbb{S}^1\right)<\operatorname{dis}(f)<c_0$, choose $0<\epsilon<1$ such that $\operatorname{dis}(f)+\epsilon<c_0$. Let $\mathrm{K}$ be a one-dimensional finite simplicial complex triangulating $\mathbb{S}^1$ in which the diameter of each $1$-simplex is less than $\epsilon$ (any regular $m$-gon inscribed in the unit circle with side length less than $\epsilon$ can be chosen as $\mathrm{K}$). Let the vertex set of $\mathrm{K}$ be ordered counterclockwise as $\mathrm{K}^{(0)}=\{v_1,v_2,\dots v_m\}$. We define the map $\bar{f}:\mathrm{K}\to\mathbb{S}^1$
    $$\bar{f}(x)= \begin{cases} f(x) & \text { for } x\in\mathrm{K}^{(0)} \\  \frac{\lambda f(v_i)+(1-\lambda)f(v_{i+1})}{\lVert\lambda f(v_i)+(1-\lambda)f(v_{i+1})\rVert} & \text{ for } x=\lambda v_i+(1-\lambda)v_{i+1}, \forall\lambda \in[0,1] \end{cases}$$
    Now, $\bar{f}:\mathrm{K}\to\mathbb{S}^1$ is a continuous map. If $\operatorname{dis}(f)+\epsilon<\psi\left(\operatorname{d}_{\mathrm{H}}\left(\mathrm{A},\mathbb{S}^1\right)\right)$ then there exists $x\in\mathbb{S}^1$ such that $\mathbb{B}(x,\psi^{-1}(\operatorname{dis}(f)+\epsilon))\cap\mathrm{A}=\emptyset$. Thus, there exists no vertex $v_i\in\mathrm{K}^{(0)}$ such that $f(v_i)\in\mathbb{B}(x,\psi^{-1}\left(\operatorname{dis}(f)+\epsilon\right))\cap\mathbb{S}^1$.
    Now if $x\in \bar{f}(\mathbb{S}^1)$ then there exists $i\in\{1,2\dots,m\}$ such that $x\in \bar{f}([v_i,v_{i+1}])$ and the line joining the origin and $x$ intersects the segment $\{\lambda f(v_i)+(1-\lambda)f(v_{i+1})\,|\,\lambda\in[0,1]\}$. As $\operatorname{d}(f(v_i),x)>\psi^{-1}\left(\operatorname{dis}(f)+\epsilon\right)$ and $\operatorname{d}(f(v_{i+1}),x)>\psi^{-1}\left(\operatorname{dis}(f)+\epsilon\right)$ we would have $\operatorname{d}(f(v_i),f(v_{i+1}))\geq\psi(\psi^{-1}\left(\operatorname{dis}(f)+\epsilon\right))=\operatorname{dis}(f)+\epsilon$ (see Figure \ref{fig:int}a) which would be a contradiction. Thus $x\notin\bar{f}(\mathbb{S}^1)$, that is, $\bar{f}$ is not surjective, so $\operatorname{deg}(\bar{f})=0$ and by Proposition \ref{prop:degree}, $\bar{f}$ identifies a pair of antipodal points say $u,-u\in\mathrm{K}$, with $\bar{f}(u)=\bar{f}(-u)=p$. Say $u\in[v_i,v_{i+1}]$ and $-u\in[v_j,v_{j+1}]$. Each of the chords $[f(v_i),f(v_{i+1})]$ and $[f(v_j),f(v_{j+1})]$ meets the ray $\{tp\mid t>0\}$, so $p$ lies on the arc that chord cuts off on the far side of the origin (see Figure \ref{fig:int}b). Both arcs therefore contain $p$ and, as the length of these chords is less than $\operatorname{dis}(f)+\epsilon$, both have angular width less than $2\sin^{-1}\left(\frac{\operatorname{dis}(f)+\epsilon}{2}\right)$, so some endpoint of the first and some endpoint of the second are separated by at most half that angle. Hence there exists $v\in\{v_i,v_{i+1}\}$ and $w\in\{v_j,v_{j+1}\}$ such that $\operatorname{d}(f(v),f(w))\leq \psi^{-1}(\operatorname{dis}(f)+\epsilon)$. Note that as $u\in [v_i,v_{i+1}]$ and $-u\in[v_j,v_{j+1}]$ we have $$\operatorname{d}(v,w)\geq 2\sin(\frac{1}{2}(\pi-4\sin^{-1}(\epsilon/2)))=2\cos(2\sin^{-1}(\epsilon/2))=2(1-2\sin^2(\sin^{-1}(\epsilon/2)))=2-\epsilon^2\geq 2-\epsilon$$
    Hence,
    \begin{align*}
        &\operatorname{dis}(f)\geq \operatorname{d}(v,w)-\operatorname{d}(f(v),f(w))\geq 2-\epsilon-\psi^{-1}(\operatorname{dis}(f)+\epsilon)\\
        &\operatorname{dis}(f)+\epsilon + \psi^{-1}(\operatorname{dis}(f)+\epsilon)\geq2
    \end{align*}
As $\psi^{-1}(x)+x$ is an increasing function, this would imply that $\operatorname{dis}(f)+\epsilon\geq c_0$ which is a contradiction. Therefore, $\operatorname{dis}(f)\geq \psi\left(\operatorname{d}_{\mathrm{H}}\left(\mathrm{A},\mathbb{S}^1\right)\right)$ for any $f:\mathbb{S}^1\to \mathrm{A}$ with $2\operatorname{d}_{\mathrm{GH}}\left(\mathrm{A},\mathbb{S}^1\right)<\operatorname{dis}(f)<c_0$, therefore $\operatorname{d}_{\mathrm{GH}}\left(\mathrm{A},\mathbb{S}^1\right)\geq\frac{1}{2}\psi\left(\operatorname{d}_{\mathrm{H}}\left(\mathrm{A},\mathbb{S}^1\right)\right)$
\end{proof}
\end{thm}
\begin{figure}[htb]
\centering
\begin{subfigure}{0.45\textwidth}
\centering
\begin{tikzpicture}[scale=1]
\coordinate (o) at (0,0);
\coordinate (x) at (45:4);
\coordinate (y) at (10:4);
\coordinate (z) at (80:4);
\coordinate (a) at (5:4);
\coordinate (b) at (85:4);
\fill (o) circle (1pt);
\draw (o) node[node font=\tiny,anchor=north east] {$O$};
\draw[red!50!black,dashed] (10:4) arc(10:80:4) node[midway,above,sloped,fill=white,inner sep=8pt,node font=\tiny] (){$\mathbb{S}^1\setminus\mathrm{A}$};
\draw[black] (80:4) arc(80:90:4) ;
\draw[black] (0:4) arc(0:10:4);
\draw[black](0:4)  node[node font=\tiny,anchor=north] {$\mathrm{A}$};
\draw[black] (90:4) node[node font=\tiny,anchor=east] {$\mathrm{A}$};
\draw[blue](85:4) node[node font=\tiny,anchor=south,inner sep=4pt] {$f(v_{i+1})$};
\draw[blue] (5:4) node[node font=\tiny,anchor=north west,inner sep=2pt] {$f(v_i)$};
\draw[red!50!black] (x) node[node font=\tiny,inner sep=1pt,anchor=south west] {$x$};
\path[draw,dotted,black] (o)--(x);
\path[draw,dashed,red!50!black] (x)--(y) node[midway,fill=white,inner sep=0pt,sloped,node font=\tiny] (){$\operatorname{d}_{\mathrm{H}}$};
\path[draw,dashed,red!50!black] (y)--(z) node[midway,fill=white,inner sep=0pt,sloped,node font=\tiny] (){$\psi(\operatorname{d}_{\mathrm{H}})$};
\path[draw,dashed,blue] (a)--(b) node[midway,fill=white,inner sep=0pt,sloped,node font=\tiny] (){};

\fill[blue] (5:4) circle(1pt);
\fill[blue] (85:4) circle(1pt);
\fill[red!50!black] (x) circle(1pt);
\end{tikzpicture}
\caption*{(a)}
\end{subfigure}\hfill
\begin{subfigure}{0.45\textwidth}
\centering
\begin{tikzpicture}[scale=1]
\draw[black] (-10:4) arc(-10:85:4);
\draw[black,dashed] (85:4) arc(85:90:4);
\coordinate (a) at (-10:4);
\coordinate (b) at (60:4);
\coordinate (c) at (10:4);
\coordinate (d) at (85:4);
\coordinate (e) at (47.5:4);
\fill[blue] (a) circle (1pt);
\fill[blue] (b) circle (1pt);
\fill[red] (c) circle (1pt);
\fill[red] (d) circle (1pt);
\fill[green] (e) circle (1pt);
\draw[blue] (a) node[node font=\tiny,anchor=west] {$f(v_i)$};
\draw[blue] (b) node[node font=\tiny,anchor=south west] {$f(v_{i+1})=f(v)$};
\draw[red] (c) node[node font=\tiny,anchor=south west] {$f(v_j)$};
\draw[red] (d) node[node font=\tiny,anchor=south] {$f(v_{j+1})=f(w)$};
\path[draw,blue,thin](a)--(b) node[midway,sloped,fill=white,inner sep=0pt,node font=\tiny] (){$r$};
\path[draw,red,thin] (c)--(d) node[midway,sloped,fill=white,inner sep=0pt,node font=\tiny] (){$r$};
\path[draw,,dashed,green!70!black,thin] (c)--(e) node[midway,fill=white,inner sep=0pt,sloped,node font=\tiny] (){$\psi^{-1}(r)$};
\path[draw,dashed,green!70!black,thin] (d)--(e);
\path[draw,dashed,red!50!black,thin] (b)--(d);
\end{tikzpicture}
\caption*{(b)}
\end{subfigure}\hfill
\caption{(a) shows that if $x\in\bar{f}(\mathrm{K})$ then $\operatorname{d}(f(v_i),f(v_{i+1}))\geq\operatorname{dis}(f)+\epsilon$. In (b) note that $\operatorname{d}(f(v),f(w))\leq\psi^{-1}(r)$ for $r=\operatorname{dis}(f)+\epsilon$.}
\label{fig:int}
\end{figure}

We next see that we can construct correspondence with distortions close to the lower bound mentioned in Theorem \ref{theorem:circbu}.
\begin{lemma}\label{lemma:mincor}
    Let $\mathrm{P}\subset\mathbb{S}^1$ be a finite union of disjoint closed arcs $\{I_i\}_{i=1}^n$ ordered counterclockwise with $I_i=[a_i,b_i]$ where $a_i,b_i\in[0,2\pi]$ and $\psi(\operatorname{d}_\mathrm{H}(\mathrm{P},\mathbb{S}^1))<\sqrt2$. Then there exists a correspondence $\mathcal{R}_0\subset\mathrm{P}\times\mathbb{S}^1$ such that $\operatorname{dis}(\mathcal{R}_0)=\psi(\operatorname{d}_\mathrm{H}(\mathrm{P},\mathbb{S}^1))$
    \begin{proof}
         Let $\mathrm{Q} =\mathbb{S}^1\setminus\mathrm{P}$ then $\mathrm{Q}=\bigsqcup_{i=1}^{n}J_i$ where $J_i=(b_i,a_{i+1})$ (indices taken modulo $n$, so that $a_{n+1}=a_1$). For any arc $I$, let $|I|$ denote the angle subtended at the center, define, $\operatorname{chd}(I)=2\sin(\frac{|I|}{2})$ as the length of the chord joining the endpoints of its closure. Let $\theta_i$ be the angle subtended by $J_i$.  Define the correspondence $\mathcal{R}_0$ as $\mathcal{R}_0=\{(x,x)|\;\forall\;x\in \mathrm{P}\}\cup\bigcup_{i=1}^n\{(b_i,y)|\;\forall\;y \in J_i\}$, note that the projections on the coordinates $\pi_{\mathbb{S}^1}(\mathcal{R}_0)=\mathbb{S}^1$ and $\pi_{\mathrm{P}}(\mathcal{R}_0)=\mathrm{P}$. As $\mathrm{P}$ is a disjoint union of closed arcs the Hausdorff distance $\operatorname{d}_\mathrm{H}(\mathrm{P},\mathbb{S}^1)=\max_{i=1}^n\{\psi^{-1}\circ \operatorname{chd}(J_i)\}$. From the assumption $\psi(\operatorname{d}_\mathrm{H}(\mathrm{P},\mathbb{S}^1))<\sqrt2$ we also have $\theta_i<\frac{\pi}{2}$. Let $\operatorname{dif}_\mathrm{P}:\mathrm{P}\times\mathrm{P}\to\mathbb{R}$ be defined as $\operatorname{dif}_\mathrm{P}(x,x')=\sup\limits_{\mathcal{R}_{x,x'}}\{|\operatorname{d}(x,x')-\operatorname{d}(y,y')|\}$ where supremum is taken over the set $\mathcal{R}_{x,x'}=\mathcal{R}_0(x)\times\mathcal{R}_0(x')$. Then, $\operatorname{dis}(\mathcal{R}_0)=\sup\limits_{(x,x')\in\mathrm{P}\times\mathrm{P}}\operatorname{dif}_\mathrm{P}(x,x')$. To find the distortion of $\mathcal{R}_0$ we split $\mathrm{P}\times\mathrm{P}$ into the cases.\\
        \noindent  Case 1: $x \in \cup_{i=1}^n I_i\setminus\{b_i\}$ and $x' \in \cup_{i=1}^n I_i\setminus\{b_i\}$. Then as $\mathcal{R}_0(x)=\{y\}$ and $\mathcal{R}_0(x')=\{y'\}$ we have $\operatorname{dif}_\mathrm{P}(x,x')=0$.\\
        \noindent Case 2: $x=x'=b_i$ for some $i\in\{1,2\dots n\}$ then, $\operatorname{dif}_\mathrm{P}(x,x')=\sup\limits_{y,y'\in J_i}\{\operatorname{d}(y,y')\}\leq\psi(\operatorname{d}_\mathrm{H}(\mathrm{P},\mathbb{S}^1))$\\
        \noindent Case 3: $x=b_i$ and $x'=b_j$. Re-label if necessary, such that the counterclockwise arc from $b_1 \in\{b_i,b_j\}$ to $b_2\in\{b_i,b_j\}\setminus b_1$ has arc length at most $\pi$. Now, let the angle between $b_1$ and $b_2$ be $\theta_1+\alpha$ for $\alpha\geq0$, so that $\operatorname{d}(b_1,b_2)=2\sin(\frac{\theta_1+\alpha}{2})$. Then $\alpha\leq\pi$ and $\theta_1+\alpha \leq \pi$ by the choice of $b_1$ and $b_2$. For any $y_1\in\mathcal{R}_0(b_1)$ let the angle  between $b_1$ and $y_1$ be $t$ and for any $y_2\in\mathcal{R}_0(b_2)$ let the angle between $b_2$ and $y_2$ be $s$. Then we would have,
        \[\operatorname{dif}_\mathrm{P}(b_i,b_j)=\sup\limits_{\substack{\mathllap{0\le} t \mathrlap{\le \theta_1} \\ \mathllap{0 \le} s \mathrlap{\le \theta_2}}}\hspace{10pt}\left|2\sin\left(\frac{\theta_1+\alpha}{2}\right)-2\sin\left(\frac{\theta_1+\alpha+s-t}{2}\right)\right|\]
        Let $\delta =s-t\in[-\theta_1,\theta_2]$ and $\phi(\delta)=2\sin(\frac{\theta_1+\alpha}{2})-2\sin(\frac{\theta_1+\alpha+\delta}{2})$ (see Figure \ref{fig:mincor}a). Then $\phi'(\delta) = -\cos(\frac{\theta_1+\alpha+\delta}{2})$ vanishes only at $\delta^*=\pi-\theta_1-\alpha$. To see the maximum (or minimum) value of $\phi$ we check the values at the endpoints of the interval $[-\theta_1,\theta_2]$ and at the point $\delta^*=\pi-\theta_1-\alpha$. Now  note that $\phi(\delta)$ decreases from $\delta =-\theta_1$ to $\delta = \delta^*$ and increases from $\delta=\delta^*$ to $\delta=\theta_2$, so $\sup\lvert\phi\rvert$ is attained at $\delta=-\theta_1$, at $\delta=\delta^*$ or at $\delta=\theta_2$. At $\delta^*$ the function $\phi$ attains its minimum, so there $\lvert\phi(\delta^*)\rvert=2-2\sin(\frac{\theta_1+\alpha}{2})$. Also, if $\delta^*\in[-\theta_1,\theta_2]$ then $\theta_1+\theta_2+\alpha\geq \pi$, rearranging we have $\frac{\pi-\theta_2}{2}\leq\frac{\theta_1+\alpha}{2}$ thus from the choice of $i,j$ and $\alpha$ we get, $\frac{\pi-\theta_2}{2}\leq\frac{\theta_1+\alpha}{2}\leq\frac{\pi}{2}$. As $\sin$ increases in this interval $\sin(\frac{\theta_1+\alpha}{2})\geq\cos(\frac{\theta_2}{2})\geq\cos(\frac{\theta}{2})$ with $\theta=\max\{\theta_i,\theta_j\}$. Now as $\theta_i<\frac{\pi}{2}$ for each $i$, we have $\frac{\theta}{2}<\frac{\pi}{4}$ which implies $2(\sin\frac{\theta}{2}+\cos\frac{\theta}{2})\geq2$. Thus,
        \begin{align*}
           &|\phi(\delta^*)|=2-2\sin(\frac{\theta_1+\alpha}{2})\leq 2-2\cos\frac{\theta}{2}\\
           &\phantom{|\phi(\delta^*)|=2-2\sin(\frac{\theta_1+\alpha}{2})}\leq2\sin\frac{\theta}{2}\leq\psi(\operatorname{d}_\mathrm{H}(\mathrm{P},\mathbb{S}^1)) 
        \end{align*}
                
         At the endpoints, $|\phi(-\theta_1)|=4|\cos(\frac{\theta_1}{4}+\frac{\alpha}{2})||\sin(\frac{\theta_1}{4})|\leq4\cos(\frac{\theta_1}{4})\sin(\frac{\theta_1}{4})=2\sin\frac{\theta_1}{2}\leq2\sin\frac{\theta}{2}$, as $\lvert\cos(\frac{\theta_1}{4}+\frac{\alpha}{2})\rvert\leq\cos(\frac{\theta_1}{4})$ given $\frac{\theta_1}{4}+\frac{\alpha}{2}\leq\frac{\theta_1}{2}+\frac{\alpha}{2}\leq\frac{\pi}{2}$. Similarly, at the other endpoint $\phi(\theta_2)=-4\cos\left(\frac{\theta_1+\alpha}{2}+\frac{\theta_2}{4}\right)\sin\left(\frac{\theta_2}{4}\right)$. Note that $\lvert\cos u\rvert\leq\cos c$ whenever $c\leq u\leq\pi-c$ for $c\in\left[0,\frac{\pi}{2}\right]$. With $c=\frac{\theta_2}{4}$ this applies, since $\frac{\theta_2}{4}\leq\frac{\theta_1+\alpha}{2}+\frac{\theta_2}{4}\leq\frac{\pi}{2}+\frac{\theta_2}{4}\leq\pi-\frac{\theta_2}{4}$. Hence $\lvert\phi(\theta_2)\rvert\leq4\cos\left(\frac{\theta_2}{4}\right)\sin\left(\frac{\theta_2}{4}\right)=2\sin\frac{\theta_2}{2}\leq2\sin\frac{\theta}{2}$. Therefore $\operatorname{dif}_\mathrm{P}(b_i,b_j)\leq2\sin\frac{\theta}{2}\leq\psi(\operatorname{d}_\mathrm{H}(\mathrm{P},\mathbb{S}^1))$.\\
        \noindent Case 4: $x \in \cup_{i=1}^n I_i\setminus\{b_i\}$ and $x'=b_j$ then, $$\operatorname{dif}_{\mathrm{P}}(x,b_j)=\begin{cases}\operatorname{d}(x,a_{j+1}) - \operatorname{d}(x,b_j)& \text { for }x\in[-a_{j+1},b_j]\\
        2-\operatorname{d}(x,b_j) & \text { for } x\in\left[\frac{-(b_j+a_{j+1})}{2},-a_{j+1}\right]\\
        \max\left(2-\operatorname{d}(x,b_j),\operatorname{d}(x,b_j)-\operatorname{d}(x,a_{j+1})\right) & \text { for } x\in\left[-b_j,\frac{-(b_j+a_{j+1})}{2}\right]\\
        \operatorname{d}(x,b_j)-\operatorname{d}(x,a_{j+1})  & \text { for }x\in\left[a_{j+1},-b_j\right]
        \end{cases}$$
        In the first sub-interval let $\alpha\in[0,\pi-\theta_j]$ be the angle subtended by the chord $x\,b_j$ then, $\operatorname{d}(x,a_{j+1}) - \operatorname{d}(x,b_j)=2\sin\left(\frac{\theta_j+\alpha}{2}\right)-2\sin\left(\frac{\alpha}{2}\right)$. Now, $2\sin\left(\frac{\theta_j+\alpha}{2}\right)-2\sin\left(\frac{\alpha}{2}\right)$ is decreasing and has maximum value $2\sin\left(\frac{\theta}{2}\right)\leq\psi(\operatorname{d}_\mathrm{H}(\mathrm{P},\mathbb{S}^1))$. In the second sub-interval $2-\operatorname{d}(x,b_j)\leq2-2\sin\left(\frac{\pi-\theta_j}{2}\right)\leq2\sin\left(\frac{\theta_j}{2}\right)\leq\psi(\operatorname{d}_\mathrm{H}(\mathrm{P},\mathbb{S}^1))$. In the third sub-interval that is when $\alpha\in\left[\pi-\frac{\theta_j}{2},\pi\right]$, we get, 
     \begin{align*}
         &\max\left(2-\operatorname{d}(x,b_j),\operatorname{d}(x,b_j)-\operatorname{d}(x,a_{j+1})\right)\leq \max\left(2-2\sin\left(\frac{\alpha}{2}\right),2\sin\left(\frac{\alpha}{2}\right)-2\sin\left(\frac{2\pi-\theta_j-\alpha)}{2}\right)\right)\\ 
         &\phantom{\max\left(2-\operatorname{d}(x,b_j),\operatorname{d}(x,b_j)-\operatorname{d}(x,a_{j+1})\right)\,}\leq2\sin\left(\frac{\theta_j}{2}\right)\leq\psi(\operatorname{d}_\mathrm{H}(\mathrm{P},\mathbb{S}^1))
     \end{align*}        
     Therefore, $\operatorname{dis}(\mathcal{R}_0)\leq\psi(\operatorname{d}_\mathrm{H}(\mathrm{P},\mathbb{S}^1))$. Conversely, let $i$ be an index with $\operatorname{chd}(J_i)=\max\limits_{j}\operatorname{chd}(J_j)$. By Case 2, $\operatorname{dif}_\mathrm{P}(b_i,b_i)=\sup\limits_{y,y'\in J_i}\{\operatorname{d}(y,y')\}=\operatorname{chd}(J_i)$ and, as $\operatorname{d}_\mathrm{H}(\mathrm{P},\mathbb{S}^1)=\max\limits_{j}\{\psi^{-1}\circ\operatorname{chd}(J_j)\}$, we have $\operatorname{chd}(J_i)=\psi(\operatorname{d}_\mathrm{H}(\mathrm{P},\mathbb{S}^1))$. Hence $\operatorname{dis}(\mathcal{R}_0)=\psi(\operatorname{d}_\mathrm{H}(\mathrm{P},\mathbb{S}^1))$.
\begin{figure}[htb]
\centering
\begin{subfigure}{0.45\textwidth}
\centering
\begin{tikzpicture}[scale=1.1]
\draw[black] (-20:2) arc(-20:150:2);
\draw[blue,thick] (150:2) arc(150:205:2);
\draw[blue,thick] (285:2) arc(285:340:2);
\draw[red] (205:2) arc(205:285:2);
\fill (150:2) circle(1pt); \draw (150:2) node[node font=\tiny,anchor=south east] (){$b_i$};
\fill (205:2) circle(1pt); \draw (205:2) node[node font=\tiny,anchor=east] (){$a_{i+1}$};
\fill (285:2) circle(1pt); \draw (285:2) node[node font=\tiny,anchor=north] (){$b_j$};
\fill (340:2) circle(1pt); \draw (340:2) node[node font=\tiny,anchor=west] (){$a_{j+1}$};
\fill[blue] (190:2) circle(1pt); \draw[blue] (190:2) node[node font=\tiny,anchor=east] (){$y$};
\fill[blue] (325:2) circle(1pt); \draw[blue] (325:2) node[node font=\tiny,anchor=west] (){$y'$};
\path[draw,green!60!black,dashed,thick] (150:2)--(285:2) node[pos=0.3,sloped,fill=white,inner sep=1pt,node font=\tiny] (){$\operatorname{d}(b_i,b_j)$};
\path[draw,dashed,green!60!black,thick] (190:2)--(325:2) node[pos=0.72,sloped,fill=white,inner sep=1pt,node font=\tiny] (){$\operatorname{d}(y,y')$};
\draw (177:2.25) node[node font=\tiny] (){$\theta_1$};
\draw (312:2.25) node[node font=\tiny] (){$\theta_2$};
\draw (245:2.25) node[node font=\tiny] (){$\alpha$};
\end{tikzpicture}
\caption*{(a) Case 3}
\end{subfigure}\hfill
\begin{subfigure}{0.45\textwidth}
\centering
\begin{tikzpicture}[scale=1.1]
\draw[black] (-10:2) arc(-10:300:2);
\draw[blue,thick] (300:2) arc(300:350:2);
\fill (300:2) circle(1pt); \draw (300:2) node[node font=\tiny,anchor=north west] (){$b_j$};
\fill (350:2) circle(1pt); \draw (350:2) node[node font=\tiny,anchor=west] (){$a_{j+1}$};
\fill[black] (150:2) circle(1pt); \draw[black] (150:2) node[node font=\tiny,anchor=east] (){$x$};
\fill (330:2) circle(1pt); \draw (330:2) node[node font=\tiny,anchor=north] (){$-x$};
\path[draw,green!60!black,dashed,thick] (150:2)--(300:2) node[pos=0.62,sloped,fill=white,inner sep=1pt,node font=\tiny] (){$\operatorname{d}(x,b_j)$};
\path[draw,green!60!black,dashed,thick] (150:2)--(350:2) node[pos=0.8,sloped,fill=white,inner sep=1pt,node font=\tiny] (){$\operatorname{d}(x,a_{j+1})$};
\path[draw,green!60!black,dashed,thick] (150:2)--(330:2) node[pos=0.5,sloped,fill=white,inner sep=1pt,node font=\tiny] (){$2$};
\end{tikzpicture}
\caption*{(b) Case 4}
\end{subfigure}\hfill
\caption{Cases 3 and 4 of Lemma \ref{lemma:mincor}. Blue arcs are the gaps $J_i=(b_i,a_{i+1})$ and $J_j=(b_j,a_{j+1})$ (the complement of $\mathrm{P}$); $\mathcal{R}_0$ sends $b_i$ and $b_j$ to their whole gap arcs. In (a) the reference chord $\operatorname{d}(b_i,b_j)$ subtends the shorter arc $\theta_1+\alpha$, while $y,y'$ range over the two gaps; the red arc from $a_{i+1}$ to $b_j$ is the part of $\mathrm{P}$ separating them. In (b) $x\in\mathrm{P}$ corresponds to itself and $b_j$ to the whole gap $J_j$ (in blue); when the antipode $-x$ lies in $J_j$ the distance $\operatorname{d}(x,y)$ for $y\in J_j$ attains the diameter $2$.}
\label{fig:mincor}
\end{figure}
    \end{proof}
\end{lemma}

\begin{cor}\label{cor:circeq}
    For a subset $\mathrm{A}\subset\mathbb{S}^1$ if $\psi(\operatorname{d}_{\mathrm{H}}(\mathrm{A},\mathbb{S}^1))\leq c_0$, then
    $\operatorname{d}_{\mathrm{GH}}(\mathrm{A},\mathbb{S}^1)=\frac{1}{2}\psi(\operatorname{d}_{\mathrm{H}}(\mathrm{A},\mathbb{S}^1))$ (here $c_0$ is as defined in Theorem \ref{theorem:circbu}).
\begin{proof}
    From Theorem \ref{theorem:circbu} we have $\operatorname{d}_{\mathrm{GH}}(\mathrm{A},\mathbb{S}^1)\geq\frac{1}{2}\psi(\operatorname{d}_{\mathrm{H}}(\mathrm{A},\mathbb{S}^1))$.
    For any set $\mathrm{A}$ and any $0<\epsilon<\frac{1}{2}\operatorname{d}_\mathrm{H}(\mathrm{A},\mathbb{S}^1)$, let $\overline{\mathrm{A}^\epsilon_{\mathbb{S}^1}}=\overline{\mathrm{A}^\epsilon\cap\mathbb{S}^1}$. Note that $\mathrm{A}^\epsilon\cap\mathbb{S}^1$ is open in $\mathbb{S}^1$ and is a union of finite open arcs with diameters (in the Euclidean metric) greater than $\psi(\epsilon)$. Thus, $\overline{\mathrm{A}^\epsilon_{\mathbb{S}^1}}$ is a finite union of disjoint closed arcs. Also, as $\mathrm{A}\subset\overline{\mathrm{A}^\epsilon_{\mathbb{S}^1}}\subset\mathbb{S}^1$, we have $\operatorname{d}_\mathrm{H}(\overline{\mathrm{A}^\epsilon_{\mathbb{S}^1}},\mathbb{S}^1)\leq \operatorname{d}_\mathrm{H}(\mathrm{A},\mathbb{S}^1)$. Now, as $\mathrm{A}\subset\overline{\mathrm{A}^\epsilon_{\mathbb{S}^1}}\subset \overline{\mathrm{A}^\epsilon}$, $\operatorname{d}_{\mathrm{GH}}(\mathrm{A},\overline{\mathrm{A}^\epsilon_{\mathbb{S}^1}})\leq\operatorname{d}_\mathrm{H}(\mathrm{A},\overline{\mathrm{A}^\epsilon_{\mathbb{S}^1}})\leq\operatorname{d}_\mathrm{H}(\mathrm{A},\overline{\mathrm{A}^\epsilon})\leq\epsilon$. Now from Lemma \ref{lemma:mincor} we have $\operatorname{d}_{\mathrm{GH}}(\overline{\mathrm{A}^\epsilon_{\mathbb{S}^1}},\mathbb{S}^1)\leq\frac{1}{2}\psi(\operatorname{d}_{\mathrm{H}}(\overline{\mathrm{A}^\epsilon_{\mathbb{S}^1}},\mathbb{S}^1))\leq\frac{1}{2}\psi(\operatorname{d}_{\mathrm{H}}(\mathrm{A},\mathbb{S}^1))$. Now $$(\operatorname{d}_{\mathrm{GH}}(\mathrm{A},\mathbb{S}^1))\leq\operatorname{d}_{\mathrm{GH}}(\overline{\mathrm{A}^\epsilon_{\mathbb{S}^1}},\mathbb{S}^1)+\operatorname{d}_{\mathrm{GH}}(\mathrm{A},\overline{\mathrm{A}^\epsilon_{\mathbb{S}^1}})\leq\frac{1}{2}\psi(\operatorname{d}_{\mathrm{H}}(\overline{\mathrm{A}^\epsilon_{\mathbb{S}^1}},\mathbb{S}^1))+\epsilon\leq\frac{1}{2}\psi(\operatorname{d}_{\mathrm{H}}(\mathrm{A},\mathbb{S}^1))+\epsilon$$
    As $\epsilon>0$ can be arbitrarily close to zero we have $\operatorname{d}_{\mathrm{GH}}(\mathrm{A},\mathbb{S}^1)=\frac{1}{2}\psi(\operatorname{d}_{\mathrm{H}}(\mathrm{A},\mathbb{S}^1))$.
\end{proof}
\end{cor}

This contrasts with the circle equipped with the geodesic (length-space) metric, for which the Gromov-Hausdorff distance of a subset from the circle coincides with its Hausdorff distance, $\operatorname{d}_{\mathrm{GH}}(\mathrm{A},\mathbb{S}^1)=\operatorname{d}_{\mathrm{H}}(\mathrm{A},\mathbb{S}^1)$ \cite{adams2025lowerboundinggromovhausdorffdistance}. For the Euclidean metric Corollary \ref{cor:circeq} instead gives $\operatorname{d}_{\mathrm{GH}}(\mathrm{A},\mathbb{S}^1)=\frac{1}{2}\psi(\operatorname{d}_{\mathrm{H}}(\mathrm{A},\mathbb{S}^1))$, which is strictly smaller than $\operatorname{d}_{\mathrm{H}}(\mathrm{A},\mathbb{S}^1)$ whenever $\operatorname{d}_{\mathrm{H}}(\mathrm{A},\mathbb{S}^1)>0$. Figure \ref{fig:circbound} plots these bounds.

\begin{figure}[htb]
\centering
\begin{tikzpicture}
\begin{axis}[
    xlabel={$\operatorname{d}_{\mathrm{H}}(\mathrm{A},\mathbb{S}^1)$},
    ylabel={$\operatorname{d}_{\mathrm{GH}}(\mathrm{A},\mathbb{S}^1)$},
    axis lines=left,
    axis line style={-{Stealth[length=2.5mm]}},
    tick style={draw=none},
    xmin=0, xmax=1.4142, ymin=0, ymax=1.05,
    samples=200,
    width=0.72\textwidth, height=0.5\textwidth,
    legend pos=north west,
    legend cell align=left,
]

\addplot[blue, thick, domain=0:0.6958] {0.5*x*sqrt(4-x^2)};
\addlegendentry{$\tfrac{1}{2}\psi(\operatorname{d}_{\mathrm{H}})$}

\addplot[red, thick, domain=0.6958:1.4142] {0.6525};
\addlegendentry{ $\tfrac{c_0}{2}$}

\addplot[green!60!black,dashed, thick,domain=0:1]{x};
\addlegendentry{the line y=x}
\addplot[gray, thick, dashed, domain=0.6958:1.4142] {0.5*x*sqrt(4-x^2)};
\addlegendentry{$\tfrac{1}{2}\psi$ continued}

\addplot[black, dotted, forget plot] coordinates {(0.6958,0) (0.6958,0.6525)};
\end{axis}
\end{tikzpicture}
\caption{The circle bound of Theorem \ref{theorem:circbu} and Corollary \ref{cor:circeq}. For $\operatorname{d}_{\mathrm{H}}\left(\mathrm{A},\mathbb{S}^1\right)\leq\psi^{-1}(c_0)\approx0.696$ (equivalently $\psi(\operatorname{d}_{\mathrm{H}})\leq c_0$) the Gromov-Hausdorff distance is determined exactly, $\operatorname{d}_{\mathrm{GH}}\left(\mathrm{A},\mathbb{S}^1\right)=\tfrac{1}{2}\psi(\operatorname{d}_{\mathrm{H}}\left(\mathrm{A},\mathbb{S}^1\right))=\tfrac{1}{2}\operatorname{d}_{\mathrm{H}}\sqrt{4-\operatorname{d}_{\mathrm{H}}^2}$; beyond that scale Theorem \ref{theorem:circbu} gives only the lower bound $c_0/2$ (the dotted line marks the threshold).}
\label{fig:circbound}
\end{figure}
\section{Gromov-Hausdorff distance between two subsets using Vietoris-Rips complexes}\label{sec:GH2rips}

In \cite{adams2025hausdorffvsgromovhausdorffdistances} the Gromov-Hausdorff distance between two subsets of a Riemannian
manifold is estimated by using Vietoris-Rips complexes to build an interleaving
from a correspondence between the subsets. Because the Vietoris-Rips complex
interleaves at a tighter scale than the corresponding \v{C}ech complex, this sharpens
the resulting estimate. There the homotopy equivalence between the Vietoris-Rips
complex and the underlying manifold is supplied by Hausmann's theorem for
Riemannian submanifolds, which is used to construct an isomorphism on homology. The argument then follows the method of Theorem \ref{thm:firstbound}.

In order to do this in the case of Euclidean submanifolds we establish a homotopy equivalence between the Vietoris-Rips complex and the underlying manifold by means of metric thickenings\cite{doi:10.1137/17M1148025,adams2024persistent,adams2019metric}. We first show that, for a $C^{1,1}$ submanifold of positive reach,
the Vietoris-Rips complex is homotopy equivalent to the manifold at all
sufficiently small scales (Lemma \ref{lemma:Hausmanneq}). We then establish an
interleaving between the Vietoris-Rips and intrinsic \v{C}ech complexes of a subset
(Lemma \ref{lemma:interleaving}), in which the scale distortion is controlled by
the ratio of the scale to the reach. Combining these with the maps of
\cite{adams2025hausdorffvsgromovhausdorffdistances} and the reach-based distance
estimates of Lemma \ref{lemma:estimate}, we obtain a sharper lower bound on the
Gromov-Hausdorff distance between two subsets (Theorem~\ref{theorem:main2set}).

\subsection{Vietoris-Rips thickenings and complexes}\label{sec:hausmannthickenings}

\begin{Def}
    For an open cover $\mathcal{U}$ of a metric space $(\mathrm{X},\operatorname{d}_\mathrm{X})$ the \textbf{Vietoris complex} of the cover $\mathcal{V} \left(\mathcal{U} \right)$ is the simplicial complex where a finite subset $\sigma\subset\mathrm{X}$ is a simplex iff $\sigma\subset \mathrm{U}$ for some set $\mathrm{U}\in \mathcal{U}$. For the cover $\mathcal{U}_r=\{\mathrm{V}\mid\mathrm{V}\subset_{\text{open}} \mathrm{X}\text{\;and\;}\operatorname{diam} \left(\mathrm{V} \right)<r\}$ the Vietoris complex is the Vietoris-Rips complex at scale $r$, that is, $\mathcal{V} \left(\mathcal{U}_r \right)=\mathrm{VR} \left(\mathrm{X};r \right)$
\end{Def}

Its metric counterpart replaces simplices by finitely supported probability measures, with the scale recorded by the Wasserstein distance.

\begin{Def}
    For a metric space $(\mathrm{X},\operatorname{d}_\mathrm{X})$, let $\mathcal{P}^{\text {fin }}(\mathrm{X})$ denote the set of all probability measures on $\mathrm{X}$ with finite support. For $\mu, \nu \in \mathcal{P}^{\text {fin }}(\mathrm{X})$, a coupling between $\mu$ and $\nu$ is a probability measure $\gamma$ on $\mathrm{X} \times \mathrm{X}$ such that $\gamma(\mathrm{A} \times \mathrm{X})=\mu(\mathrm{A})$ and $\gamma(\mathrm{X} \times \mathrm{A})=\nu(\mathrm{A})$ for all Borel subsets $\mathrm{A} \in \mathcal{B}(\mathrm{X})$. If $\mathcal{C}$ denotes the set of couplings between $\mu, \nu \in \mathcal{P}^{\text {fin }}(\mathrm{X})$, the 1 -Wasserstein distance between $\mu$ and $\nu$ is
$$
\operatorname{d}_{W}(\mu, \nu)=\inf _{\gamma \in \mathcal{C}} \int_{\mathrm{X} \times \mathrm{X}} \operatorname{d}_{\mathrm{X}}(x, y) \gamma(d x \times d y)
$$
If $\mathcal{U}$ is a cover of $\mathrm{X}$, the \textbf{Vietoris metric thickening} $\mathcal{V}^{m}(\mathcal{U})$ is the metric space of all $\mu \in \mathcal{P}^{\text {fin }}(\mathrm{X})$ such that $\operatorname{supp}(\mu)$ is contained in an element of $\mathcal{U}$, equipped with the 1 -Wasserstein metric. As in the previous definition, for the cover $\mathcal{U}_r=\{\mathrm{V}\mid\mathrm{V}\subset_{\text{open}} \mathrm{X}\text{\; and\;}\operatorname{diam} \left(\mathrm{V} \right)<r\}$ the Vietoris metric thickening is the Vietoris-Rips metric thickening at scale $r$, that is, $\mathcal{V}^m \left(\mathcal{U}_r \right)=\mathrm{VR}^m \left(\mathrm{X};r \right)$
\end{Def}

A weak homotopy equivalence can be constructed between the Vietoris complex and the metric thickening of a cover under some conditions.

\begin{thm}\label{theorem:gillespie}(Theorem 1 \cite{gillespie2024vietoris})
For any uniformly bounded open cover $\mathcal{U}$ of a metric space $\mathrm{X}$, the
natural bijection $\operatorname{id} : |\mathcal{V} \left(\mathcal{U} \right)| \to \mathcal{V}^m \left(\mathcal{U} \right)$ is a weak homotopy equivalence. 
\end{thm}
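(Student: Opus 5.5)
The plan is to prove the statement by comparing homotopy groups directly. The two routine points are that $\operatorname{id}:|\mathcal{V}(\mathcal{U})|\to\mathcal{V}^m(\mathcal{U})$ is continuous, and that on every finite subcomplex it is a homeomorphism onto its image. The real work is a compression lemma: every map from a compact pair $(\mathrm{K},\mathrm{L})$ into the thickening can be homotoped, relative to $\mathrm{L}$, into the image of a finite subcomplex. From this, injectivity and surjectivity on every $\pi_k$ follow formally.

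\textbf{Continuity.} Let $D=\sup_{\mathrm{U}\in\mathcal{U}}\operatorname{diam}(\mathrm{U})<\infty$, which exists because $\mathcal{U}$ is uniformly bounded. If $\mu=\sum a_i\delta_{x_i}$ and $\nu=\sum b_i\delta_{x_i}$ lie in a common simplex, transporting only the excess mass gives
$$\operatorname{d}_W(\mu,\nu)\le \tfrac12 D\sum|a_i-b_i|.$$
So $\operatorname{id}$ is Lipschitz on each closed simplex, and it is continuous on $|\mathcal{V}(\mathcal{U})|$ because that space carries the weak topology. For a finite vertex set $\mathrm{F}$, measures supported on $\mathrm{F}$ are determined continuously by their weights in the $\operatorname{d}_W$ topology, since distinct points of $\mathrm{F}$ are a positive distance apart. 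Hence $\operatorname{id}$ restricted to the finite subcomplex on $\mathrm{F}$ is a homeomorphism onto its image.

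\textbf{Reduction to compression.} Every compact subset of $|\mathcal{V}(\mathcal{U})|$ lies in a finite subcomplex. Surjectivity on $\pi_k$ then follows by applying compression to $(\mathbb{S}^k,\ast)$. Injectivity follows by applying it to a null-homotopy $\mathbb{D}^{k+1}\to\mathcal{V}^m(\mathcal{U})$ relative to $\mathbb{S}^k$, whose boundary already factors through a finite subcomplex. The statement to prove is therefore:

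\emph{Compression lemma.} If $F:\mathrm{K}\to\mathcal{V}^m(\mathcal{U})$ is continuous with $\mathrm{K}$ a compact CW complex, and $F|_\mathrm{L}$ factors through a finite subcomplex for a subcomplex $\mathrm{L}$, then $F$ is homotopic relative to $\mathrm{L}$ to a map that factors through a finite subcomplex.

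\textbf{Constructing the compression.} I would subdivide $\mathrm{K}$ finely. On each vertex $v$ set $G(v)=F(v)$, which is finitely supported. On higher cells extend $G$ by convex combinations of the vertex measures, as in a simplicial approximation, so that $G$ automatically lands in a finite subcomplex spanned by $\bigcup_v\operatorname{supp}F(v)$. The homotopy from $F$ to $G$ is the straight line $(1-t)F(k)+tG(k)$. It is continuous in $\operatorname{d}_W$ because, for fixed supports, mixtures of measures vary continuously. Cells in $\mathrm{L}$ are left unchanged.

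\textbf{Main obstacle.} The difficult step is ensuring that these convex combinations and linear homotopies stay in $\mathcal{V}^m(\mathcal{U})$. For a small cell $\tau$ of $\mathrm{K}$, the union of the supports of $F(k)$ for $k\in\tau$, together with the supports of the $F(v)$ at its vertices, must lie in a single $\mathrm{U}\in\mathcal{U}$. Wasserstein closeness does not control supports, so continuity of $F$ alone does not give this. I would first try a Lebesgue-number argument on $\mathrm{K}$ for the family of sets $\{k: \operatorname{supp}F(k)\subset\mathrm{U}\}$. I expect that these sets need not be open in $\mathrm{K}$, which is where the argument is hardest. The fallback is a preliminary homotopy that pushes each measure forward along a locally finite partition of unity subordinate to $\mathcal{U}$, producing measures whose supports lie in controlled finite sets. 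Uniform boundedness of $\mathcal{U}$ is used again there to bound the transport cost, and hence the continuity, of that push-forward homotopy.
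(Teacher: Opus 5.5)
The paper does not prove this statement; it quotes it from Gillespie. Your argument therefore has to stand on its own.

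\textbf{What works.} Your outer reduction is sound. $\operatorname{id}$ is continuous by your Lipschitz estimate. It is a homeomorphism onto its image on finite subcomplexes. A compression lemma for compact pairs does give bijectivity on all $\pi_k$.

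\textbf{The gap.} It is exactly the step you flag, and neither of your remedies closes it.

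The Lebesgue-number route fails because $\{k:\operatorname{supp}F(k)\subset\mathrm{U}\}$ is not open. If $x\in\mathrm{U}$ and $\{x,z\}\subset\mathrm{U}'\in\mathcal{U}$ with $z\notin\mathrm{U}$, then $(1-s)\delta_x+s\delta_z\to\delta_x$ in $\operatorname{d}_W$.

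The push-forward $\mu\mapsto\sum_i\bigl(\int\varphi_i\,d\mu\bigr)\delta_{p_i}$ fails for a more basic reason. It relocates support points by a fixed amount, but membership in $\mathcal{V}^m(\mathcal{U})$ has no uniform margin. A finite set lying in some member of $\mathcal{U}$ can be arbitrarily close to the boundary of every member containing it; for $\mathcal{U}_r$, its diameter can be arbitrarily close to $r$. Moreover, anchors $p_i$ attached to different members of $\mathcal{U}$ need not lie in a common member. So neither the pushed-forward measure nor the segment joining it to $\mu$ need lie in $\mathcal{V}^m(\mathcal{U})$. Uniform boundedness bounds transport cost, not supports.

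A smaller issue: ``leaving cells of $\mathrm{L}$ unchanged'' does not glue with affine interpolation of vertex values on adjacent cells. It is easier to prove the absolute version, note that the homotopy restricted to $\mathrm{L}$ only involves measures on a fixed finite set, and attach a collar.

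\textbf{The missing idea.} You need a preliminary homotopy that never adds or moves support points, only deletes them by continuous reweighting, driven by an open condition.
\begin{itemize}
\item For each $s\in\mathrm{K}$, choose $\mathrm{U}_s\supset\operatorname{supp}F(s)$ and a Lipschitz $\varphi_s:\mathrm{X}\to[0,1]$ that equals $1$ on $\operatorname{supp}F(s)$ and satisfies $\{\varphi_s>0\}=\mathrm{U}_s$.
\item Since $|\int\varphi\,d\mu-\int\varphi\,d\nu|\le\operatorname{Lip}(\varphi)\operatorname{d}_W(\mu,\nu)$, the set $\mathrm{O}_s=\{k:\int\varphi_s\,dF(k)>1-\eta\}$ is an open neighborhood of $s$.
\item Refine $\{\mathrm{O}_s\}$ to a finite cover of order at most $N=\dim\mathrm{K}+1$, with each member inside some $\mathrm{O}_{s_j}$ and a subordinate partition of unity $\{\chi_j\}$. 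Take $\eta<1/N$ and $\theta(t)=\min\{1,2Nt\}$.
\item Set $F'(k)\propto w(k,\cdot)\,F(k)$, where $w(k,x)=\prod_j\bigl(1-\theta(\chi_j(k))(1-\varphi_{s_j}(x))\bigr)$.
\end{itemize}

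This reweighting has the properties you need:
\begin{itemize}
\item $\int w(k,\cdot)\,dF(k)>1-N\eta>0$, because $w(k,\cdot)\ge 1-\sum_{j:\chi_j(k)>0}(1-\varphi_{s_j})$.
\item $F'$ is $\operatorname{d}_W$-continuous. The reason is that $w$ is uniformly Lipschitz in $x$ and continuous in $k$ uniformly in $x$, and nearby measures have supports in a common bounded set.
\item $\operatorname{supp}F'(k)\subset\operatorname{supp}F(k)$, so the segment from $F$ to $F'$ stays in $\mathcal{V}^m(\mathcal{U})$ for free.
\item $\operatorname{supp}F'(k)\subset\mathrm{U}_{s_j}$ whenever $\chi_j(k)\ge 1/(2N)$.
\end{itemize}

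Now subdivide $\mathrm{K}$ so that every simplex lies in one of the open sets $\{\chi_j>1/(2N)\}$; these cover $\mathrm{K}$ because at most $N$ of the $\chi_j$ are nonzero at each point. Then your vertex interpolation of $F'$, and the straight-line homotopy to it, stay inside a single $\mathrm{U}_{s_j}$ on each simplex. Without a step of this kind, the compression lemma, and hence the proof, is not established.
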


On spaces of CW type such an equivalence is in fact a homotopy equivalence.

\begin{thm}[Whitehead's theorem (Theorem 4.5 \cite{MR1867354})]
    If topological spaces $\mathrm{X},\mathrm{Y}$ have the homotopy type of a CW-complex, then any weak homotopy equivalence $f:\mathrm{X}\to \mathrm{Y}$ is a homotopy equivalence. 
\end{thm}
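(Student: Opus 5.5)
The plan is to reduce to the case of genuine CW complexes and then prove that case with a mapping-cylinder argument and the compression lemma.

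\textbf{Reduction to CW complexes.} Choose CW complexes $\mathrm{K},\mathrm{L}$ and homotopy equivalences $a:\mathrm{K}\to\mathrm{X}$ and $b:\mathrm{Y}\to\mathrm{L}$. Homotopy equivalences are weak homotopy equivalences, and weak equivalences compose. Hence $F=b\circ f\circ a:\mathrm{K}\to\mathrm{L}$ is a weak homotopy equivalence. If $F$ has a homotopy inverse $G$, then $a\circ G\circ b$ is a homotopy inverse of $f$, up to composing with the homotopy inverses of $a$ and $b$. So it suffices to treat $f:\mathrm{K}\to\mathrm{L}$ with $\mathrm{K},\mathrm{L}$ CW complexes. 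By cellular approximation we may also assume $f$ is cellular.

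\textbf{Mapping cylinder.} Form the mapping cylinder $\mathrm{M}_f=(\mathrm{K}\times[0,1])\sqcup \mathrm{L}/\,(x,1)\sim f(x)$. Because $f$ is cellular, $\mathrm{M}_f$ is a CW complex containing $\mathrm{K}=\mathrm{K}\times\{0\}$ as a subcomplex. It deformation retracts onto $\mathrm{L}$, and $f$ factors as the inclusion $\mathrm{K}\hookrightarrow\mathrm{M}_f$ followed by this retraction $\mathrm{M}_f\to\mathrm{L}$. It is therefore enough to show that the inclusion $i:\mathrm{K}\hookrightarrow\mathrm{M}_f$ is a homotopy equivalence. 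Since $f$ is a weak equivalence, so is $i$. The long exact sequence of homotopy groups of the pair $(\mathrm{M}_f,\mathrm{K})$, taken at every basepoint in $\mathrm{K}$, then gives $\pi_n(\mathrm{M}_f,\mathrm{K})=0$ for all $n\geq1$. The bijection on $\pi_0$ supplies the $n=0$ statement: every point of $\mathrm{M}_f$ can be joined by a path to $\mathrm{K}$.

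\textbf{Compression lemma.} If $(\mathrm{W},\mathrm{A})$ is a CW pair with $\pi_n(\mathrm{W},\mathrm{A},x_0)=0$ for all $n$ and all $x_0\in\mathrm{A}$, then $\mathrm{id}_{\mathrm{W}}$ is homotopic rel $\mathrm{A}$ to a map into $\mathrm{A}$. The proof is by induction over skeleta of $\mathrm{W}$ relative to $\mathrm{A}$.
\begin{itemize}
\item Suppose the map has already been pushed into $\mathrm{A}$ on $\mathrm{A}\cup\mathrm{W}^{(n-1)}$.
\item Each $n$-cell, via its characteristic map, defines a map $(D^n,\partial D^n)\to(\mathrm{W},\mathrm{A})$.
\item Vanishing of $\pi_n(\mathrm{W},\mathrm{A})$ lets us deform this map rel $\partial D^n$ into $\mathrm{A}$. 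For $n=0$ this is just the path-connectivity condition above.
\item Doing this for all $n$-cells at once gives a homotopy on $\mathrm{A}\cup\mathrm{W}^{(n)}$ rel $\mathrm{A}\cup\mathrm{W}^{(n-1)}$.
\item The homotopy extension property of CW pairs extends it to all of $\mathrm{W}$.
\end{itemize}
If $\mathrm{W}$ is infinite-dimensional, perform the $n$-th stage during the time interval $[1/2^{n+1},1/2^{n}]$. The concatenation is continuous because $\mathrm{W}$ carries the weak topology, so each cell is stationary after finitely many stages.

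\textbf{Conclusion.} Applying the compression lemma to $(\mathrm{M}_f,\mathrm{K})$ yields a retraction $r:\mathrm{M}_f\to\mathrm{K}$ with $i\circ r\simeq\mathrm{id}$ rel $\mathrm{K}$. Thus $\mathrm{K}$ is a deformation retract of $\mathrm{M}_f$, so $i$, and hence $f$, is a homotopy equivalence.

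The main obstacle is the compression lemma: it requires the homotopy extension property for CW pairs and the infinite-stage concatenation. A secondary point is bookkeeping with basepoints and path components, since a weak equivalence must be checked at every basepoint. I would handle this by working one path component at a time.
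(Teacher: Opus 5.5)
Your argument is correct and is essentially the standard proof in the source the paper cites (Hatcher, Theorem 4.5 with the compression Lemma 4.6): cellular approximation, the mapping cylinder, vanishing of $\pi_n(\mathrm{M}_f,\mathrm{K})$ from the long exact sequence, and compression with the stages run over the time intervals $[1/2^{n+1},1/2^{n}]$. The paper gives no proof of its own, only the citation, and your preliminary reduction from spaces of CW homotopy type to CW complexes is the routine one; note that $a\circ G\circ b$ is already a homotopy inverse of $f$ directly, without any further composition.
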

\begin{rem}
    For $\lambda_n=\sqrt{\frac{n}{2(n+1)}}$ the length of a side of the $n$-simplex with unit circumradius is $\frac{1}{\lambda_n}$.
\end{rem}
For the sphere this equivalence is known explicitly.

\begin{thm}\label{theorem:snthe}(Proposition 5.3 \cite{adams2024persistent})
  There exists a map $\rho: \mathrm{VR}^{m} \left(\mathbb{S}^{n} ; r \right) \xrightarrow[\hspace{0.5cm}]{} \mathbb{S}^{n}$ such that for $0<r<\frac{1}{\lambda_{n+1}}, \rho: \mathrm{VR}^{m} \left(\mathbb{S}^{n} ; r \right) \xrightarrow[\hspace{0.5cm}]{} \mathbb{S}^{n}$ is a homotopy equivalence.
\end{thm}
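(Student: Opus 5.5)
The plan is to build $\rho$ as a normalized Euclidean barycenter and show that it is inverse, up to homotopy, to the Dirac embedding. Here $\mathbb{S}^n\subset\mathbb{R}^{n+1}$ carries the Euclidean metric. For $\mu=\sum_i a_i\delta_{x_i}\in\mathrm{VR}^m(\mathbb{S}^n;r)$, set
$$m(\mu)=\sum_i a_i x_i\in\mathbb{R}^{n+1},\qquad \rho(\mu)=\frac{m(\mu)}{\lVert m(\mu)\rVert}.$$
In the other direction let $\delta:\mathbb{S}^n\to\mathrm{VR}^m(\mathbb{S}^n;r)$, $x\mapsto\delta_x$. This map is continuous, even isometric, because $\operatorname{d}_W(\delta_x,\delta_y)=\operatorname{d}(x,y)$. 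By construction $\rho\circ\delta=\operatorname{id}_{\mathbb{S}^n}$, so the work is to show that $\rho$ is well defined and continuous, and that $\delta\circ\rho\simeq\operatorname{id}$.

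\emph{Step 1: $m(\mu)\neq0$.} Suppose $0\in\operatorname{conv}(\operatorname{supp}\mu)$. Then for any candidate center $c$ some support point satisfies $\langle x_i,c\rangle\le0$, which gives $\lVert x_i-c\rVert^2\ge1+\lVert c\rVert^2\ge1$. Hence every closed ball containing $\operatorname{supp}\mu$ has radius at least $1$. On the other hand $\operatorname{diam}(\operatorname{supp}\mu)<r<1/\lambda_{n+1}$, and Jung's theorem (Theorem \ref{theorem:Jung}) in $\mathbb{R}^{n+1}$ gives an enclosing ball of radius at most $\lambda_{n+1}\operatorname{diam}<1$, a contradiction. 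So $m(\mu)\neq 0$ and $\rho$ is defined. This is exactly where the threshold $1/\lambda_{n+1}$, the side length of the inscribed regular $(n+1)$-simplex, enters.

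\emph{Step 2: continuity.} The map $\mu\mapsto m(\mu)$ is the integral of the $1$-Lipschitz map $\operatorname{id}:\mathbb{S}^n\to\mathbb{R}^{n+1}$. By Kantorovich duality, applied coordinatewise or to $\langle\cdot,v\rangle$ for unit vectors $v$, we get $\lVert m(\mu)-m(\nu)\rVert\le\operatorname{d}_W(\mu,\nu)$. Normalization is continuous away from $0$, so $\rho$ is continuous.

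\emph{Step 3: $\delta\circ\rho\simeq\operatorname{id}$.} I would use the straight-line homotopy $\mu_t=(1-t)\mu+t\,\delta_{\rho(\mu)}$. It is continuous in $\operatorname{d}_W$, since $\operatorname{d}_W(\mu_t,\mu_s)\le|t-s|\,\operatorname{d}_W(\mu,\delta_{\rho(\mu)})$ and $\rho$ is continuous. It stays in the thickening provided $\operatorname{supp}\mu\cup\{\rho(\mu)\}$ has diameter $<r$, that is, $\lVert\rho(\mu)-x\rVert<r$ for every $x\in\operatorname{supp}\mu$. Write $\lVert p-x\rVert^2=2-2\langle p,x\rangle$ with $p=\rho(\mu)$. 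From $\lVert x_i-x\rVert<r$ we get $\langle x_i,x\rangle>1-r^2/2$, hence $\langle m,x\rangle>1-r^2/2$. When $r\le\sqrt2$ the right side is nonnegative, and $\lVert m\rVert\le1$ then gives $\langle p,x\rangle\ge\langle m,x\rangle>1-r^2/2$, which is what we need.

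The main obstacle is the range $\sqrt2<r<1/\lambda_{n+1}$. There $1-r^2/2<0$, so dividing by $\lVert m\rVert$ can make the bound worse. For this range I would use a quantitative lower bound on $\lVert m(\mu)\rVert$, obtained by strengthening Step 1: the Jung radius $\lambda_{n+1}\operatorname{diam}$ is strictly less than $1$, which pushes the convex hull a definite distance from the origin. I would try to combine this with the inequality $\langle m,x\rangle>1-r^2/2$ to show $\langle m,x\rangle/\lVert m\rVert>1-r^2/2$. If that fails, the fallback is to replace the one-step linear homotopy by a concatenation: first move $\mu$ toward the measure concentrated on the support points nearest to $p$, then to $\delta_p$, checking the diameter constraint at each stage. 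Once $\delta\circ\rho\simeq\operatorname{id}$ is established, $\rho$ is a homotopy equivalence with homotopy inverse $\delta$.
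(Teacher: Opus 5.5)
The paper does not prove this statement; it imports it from the cited reference, so there is no in-paper argument to compare against. Your Steps 1 and 2 are correct, and Step 3 is correct for $r\le\sqrt2$.

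\textbf{The gap.} The range $\sqrt2<r<1/\lambda_{n+1}$ is nonempty for every $n$. On that range you have not shown $\delta\circ\rho\simeq\operatorname{id}$, and your main repair cannot work: for $n\ge2$ the inequality $\langle\rho(\mu),x\rangle>1-r^2/2$ for $x\in\operatorname{supp}\mu$ is false.

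Here is a counterexample. Put $\kappa=r^2/2-1\in(0,\tfrac{1}{n+1})$ and $s=\tfrac65\kappa$. Take $y_\pm=(\pm e_1+e_3)/\sqrt2$ and $x_0=\sqrt{1-s^2}\,e_2-s\,e_3$.
\begin{itemize}
\item The pairwise inner products are $0$ and $-s/\sqrt2>-\kappa$. So the three points have diameter less than $r$, and $\mu_\varepsilon=\varepsilon\delta_{x_0}+\tfrac{1-\varepsilon}{2}(\delta_{y_+}+\delta_{y_-})\in\mathrm{VR}^m(\mathbb{S}^n;r)$.
\item But $\rho(\mu_\varepsilon)\to e_3$ and $\langle e_3,x_0\rangle=-s<-\kappa$. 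Hence $\lVert\rho(\mu_\varepsilon)-x_0\rVert>r$ for small $\varepsilon$.
\item Consequently $(1-t)\mu_\varepsilon+t\delta_{\rho(\mu_\varepsilon)}$ lies outside the thickening for every $t\in(0,1)$.
\end{itemize}
Since $\lVert m(\mu_\varepsilon)\rVert\to1/\sqrt2$, no lower bound on $\lVert m\rVert$ can rescue the estimate. On $\mathbb{S}^1$ the straight line does survive, because $\operatorname{supp}\mu$ and $\rho(\mu)$ lie in an arc shorter than a semicircle whose endpoints are support points. The failure is genuinely higher-dimensional.

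\textbf{The fallback.} Your fallback points the right way, but as stated it does not define a homotopy. ``The support points nearest to $p$'' does not depend continuously on $\mu$ in the Wasserstein topology: a support point of arbitrarily small weight can become, or stop being, the nearest one.

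The missing observation is the following. We have $\sum_i a_i\langle x_i,p\rangle=\langle m,p\rangle=\lVert m\rVert$. Also $\lVert m\rVert\ge\lVert m\rVert^2=\sum_i a_i\langle m,x_i\rangle>1-r^2/2$ for every $r$, by your own bound $\langle m,x\rangle>1-r^2/2$. So a definite amount of mass sits within distance $r$ of $p$, and one can move onto it continuously:
\begin{itemize}
\item Let $\theta(\mu)=\tfrac12\bigl(\lVert m(\mu)\rVert+1-\tfrac{r^2}{2}\bigr)$ and $\phi_\mu(x)=\max\{0,\langle x,\rho(\mu)\rangle-\theta(\mu)\}$. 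Set $\mu'=\bigl(\int\phi_\mu\,d\mu\bigr)^{-1}\phi_\mu\,\mu$.
\item The denominator is at least $\lVert m\rVert-\theta>0$. The map $\mu\mapsto\mu'$ is continuous, because $\phi_\mu$ varies uniformly with $\mu$ and $\operatorname{d}_W$ metrizes weak convergence on the compact $\mathbb{S}^n$.
\item The path $(1-t)\mu+t\mu'$ stays in the thickening, since its support lies in $\operatorname{supp}\mu$.
\item The path $(1-t)\mu'+t\delta_{\rho(\mu)}$ also stays in it. Each $x\in\operatorname{supp}\mu'$ has $\langle x,\rho(\mu)\rangle>\theta>1-r^2/2$, i.e.\ $\lVert x-\rho(\mu)\rVert<r$.
\end{itemize}
Concatenating the two paths gives $\operatorname{id}\simeq\delta\circ\rho$ on the whole range $0<r<1/\lambda_{n+1}$.
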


For subsets of positive reach the metric thickening recovers the homotopy type of the manifold.

\begin{thm}\label{theorem:mirth}(Theorem 4.6 \cite{adams2019metric})
    For a subset $\mathcal{M}\subset\mathbb{R}^n$ with positive reach $\operatorname{r}_1 \left(\mathcal{M} \right)>0$ the Vietoris-Rips thickening at scale $0<r<\operatorname{r}_1 \left(\mathcal{M} \right)$, $\mathrm{VR}^m \left(\mathcal{M};r \right)$ is homotopy equivalent to $\mathcal{M}$. 
\end{thm}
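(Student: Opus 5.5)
The plan is to build explicit maps in both directions between $\mathcal{M}$ and $\mathrm{VR}^m(\mathcal{M};r)$ and to show that both composites are homotopic to the identity. Throughout, $0<r<\operatorname{r}_1(\mathcal{M})$.

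\textbf{The two maps.} In one direction take the Dirac embedding $\delta:\mathcal{M}\to\mathrm{VR}^m(\mathcal{M};r)$, $x\mapsto\delta_x$. It is an isometric embedding for the $1$-Wasserstein metric, hence continuous. In the other direction use the barycenter
$$\operatorname{b}(\mu)=\sum_i m_i x_i\in\mathbb{R}^n \qquad\text{for } \mu=\sum_i m_i\delta_{x_i}.$$
The map $\operatorname{b}$ is $1$-Lipschitz from $(\mathcal{P}^{\text{fin}}(\mathcal{M}),\operatorname{d}_W)$ to $\mathbb{R}^n$, by Kantorovich duality applied to linear functionals. If $\operatorname{diam}(\operatorname{supp}\mu)<r$, then $\operatorname{b}(\mu)$ lies in the convex hull of the support, which is contained in $\mathbb{B}(x_i,r)$ for every $x_i\in\operatorname{supp}\mu$. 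In particular $\operatorname{d}_\mathcal{M}(\operatorname{b}(\mu))<r<\operatorname{r}_1$, so the metric projection applies. Define
$$\rho=\pi_\mathcal{M}\circ\operatorname{b}:\mathrm{VR}^m(\mathcal{M};r)\to\mathcal{M}.$$
It is continuous because $\pi_\mathcal{M}$ is continuous on the open $\operatorname{r}_1$-offset. Since $\operatorname{b}(\delta_x)=x$ and $\pi_\mathcal{M}(x)=x$, we get $\rho\circ\delta=\operatorname{id}_\mathcal{M}$ on the nose.

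\textbf{The homotopy $\delta\circ\rho\simeq\operatorname{id}$.} Here I would deform each support point toward the barycenter and then project back to $\mathcal{M}$. Concretely, for $t\in[0,1]$ set
$$\mu_t=(\Phi_t)_\#\mu,\qquad \Phi_t(x)=\pi_\mathcal{M}\bigl((1-t)x+t\,\operatorname{b}(\mu)\bigr).$$
\begin{itemize}
\item The map $\Phi_t$ is well defined: the segment from $x_i$ to $\operatorname{b}(\mu)$ stays in $\mathbb{B}(x_i,r)$, so it stays inside the $\operatorname{r}_1$-offset.
\item We have $\mu_0=\mu$ and $\mu_1=\delta_{\rho(\mu)}$.
\item Joint continuity in $(\mu,t)$ for the Wasserstein metric should follow from uniform continuity of $\pi_\mathcal{M}$ on a closed offset $\overline{\mathcal{M}^{r'}}$ with $r<r'<\operatorname{r}_1$, combined with the $1$-Lipschitz property of $\operatorname{b}$.
\item If the first attempt below fails, an alternative is the straight-line homotopy $(1-t)\mu+t\,\delta_{\rho(\mu)}$. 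That one is automatically continuous, but it needs $\operatorname{d}(x_i,\rho(\mu))<r$ for every $x_i\in\operatorname{supp}\mu$.
\end{itemize}

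\textbf{Main obstacle.} The hard step is proving that the deformed measures stay in the thickening, i.e.\ that $\operatorname{diam}(\operatorname{supp}\mu_t)<r$ for all $t$ (or, for the straight-line homotopy, the estimate on $\operatorname{d}(x_i,\rho(\mu))$). The naive bound from Lemma \ref{lemma:estimate} is
$$\operatorname{d}(x_i,\pi_\mathcal{M}(y))\leq\sqrt{\tfrac{\operatorname{r}_1}{\operatorname{r}_1-\operatorname{d}_\mathcal{M}(y)}}\,\operatorname{d}(x_i,y),$$
and the factor in front of $\operatorname{d}(x_i,y)$ exceeds $1$. I would first try the sharper form
$$\operatorname{d}(x_i,\pi_\mathcal{M}(y))^2\leq\tfrac{\operatorname{r}_1}{\operatorname{r}_1-\operatorname{d}_\mathcal{M}(y)}\bigl(\operatorname{d}(x_i,y)^2-\operatorname{d}_\mathcal{M}(y)^2\bigr),$$
together with the fact that the barycenter is an average of support points, which gives $\operatorname{d}(x_i,\operatorname{b}(\mu))$ a little slack below $r$. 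The aim is to show these effects compensate. If they do not, the fallback is a homotopy that moves all of $\mathcal{M}$ uniformly by the reach-controlled deformation retraction of the offset $\mathcal{M}^r$ onto $\mathcal{M}$ (the Theorem 12 cited from \cite{kim2019homotopy}). This would be applied to the thickening of the offset, using that nearest-point projection is $\tfrac{\operatorname{r}_1}{\operatorname{r}_1-s}$-Lipschitz on $\mathcal{M}^s$. The argument would then be closed by comparing $\mathrm{VR}^m(\mathcal{M};r)$ with $\mathrm{VR}^m(\mathcal{M}^{s};r)$.
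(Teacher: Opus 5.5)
\textbf{Gap: the key estimate $\operatorname{d}(x_j,\rho(\mu))<r$ is never proved.} Your maps are the right ones. The Dirac embedding $\delta$, the retraction $\rho=\pi_\mathcal{M}\circ\operatorname{b}$ with $\rho\circ\delta=\operatorname{id}_\mathcal{M}$, and the straight-line homotopy $(1-t)\mu+t\,\delta_{\rho(\mu)}$ are, to my knowledge, the ingredients of the argument in \cite{adams2019metric}; the paper itself only cites that result and gives no proof. The straight-line homotopy is continuous for $\operatorname{d}_W$ with no extra work. However, it stays in $\mathrm{VR}^m(\mathcal{M};r)$ only if $\operatorname{d}(x_j,\rho(\mu))<r$ for every $x_j\in\operatorname{supp}\mu$, and that estimate is the whole content of the theorem. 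Your proposal only hopes that ``these effects compensate.''

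Your other two routes do not avoid this estimate. For the pushforward homotopy you need $\operatorname{diam}\Phi_t(\operatorname{supp}\mu)<r$. But $\pi_\mathcal{M}$ can stretch distances by up to $\operatorname{r}_1/(\operatorname{r}_1-s)$ on $\mathcal{M}^s$, and nothing you wrote controls this. The fallback through $\mathrm{VR}^m(\mathcal{M}^s;r)$ has the same defect. Pushing forward by $\pi_\mathcal{M}$ a priori lands only in $\mathrm{VR}^m(\mathcal{M};\tfrac{\operatorname{r}_1}{\operatorname{r}_1-s}r)$, so you would have to compare different scales of $\mathrm{VR}^m(\mathcal{M};\cdot)$, which is essentially what you are trying to prove. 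The pointwise bound of Lemma \ref{lemma:estimate} cannot close the gap by itself either. Write $y=\operatorname{b}(\mu)$, $p=\rho(\mu)$ and $\delta=\operatorname{d}(y,p)$. Then the bound gives $\operatorname{d}(x_j,p)\leq\operatorname{d}(x_j,y)$ only when $\operatorname{d}(x_j,y)^2\leq\operatorname{r}_1\delta$.

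\textbf{The missing idea: make the averaging slack quantitative.} Sum Federer's normal inequality against $\mu$. That inequality, $\langle z-p,\,y-p\rangle\leq\frac{\delta}{2\operatorname{r}_1}\operatorname{d}(z,p)^2$ for $z\in\mathcal{M}$, is also the source of Lemma \ref{lemma:estimate}. Since $\sum_i m_i(x_i-p)=y-p$, you get $\delta^2\leq\frac{\delta}{2\operatorname{r}_1}\sum_i m_i\operatorname{d}(x_i,p)^2=\frac{\delta}{2\operatorname{r}_1}(V+\delta^2)$, where $V=\sum_i m_i\operatorname{d}(x_i,y)^2$. Hence $V\geq 2\operatorname{r}_1\delta-\delta^2$ when $\delta>0$: a measure whose mean lies far from $\mathcal{M}$ must be spread out. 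The parallel-axis identity then gives $\operatorname{d}(x_j,y)^2=\sum_i m_i\operatorname{d}(x_i,x_j)^2-V<r^2-2\operatorname{r}_1\delta+\delta^2$. Inserting this into the sharp form of Lemma \ref{lemma:estimate} yields
\[
\operatorname{d}(x_j,p)^2\leq\frac{\operatorname{r}_1}{\operatorname{r}_1-\delta}\left(\operatorname{d}(x_j,y)^2-\delta^2\right)<\frac{\operatorname{r}_1\left(r^2-2\operatorname{r}_1\delta\right)}{\operatorname{r}_1-\delta}\leq r^2 .
\]
For $\delta>0$ the last inequality is equivalent to $r^2\leq 2\operatorname{r}_1^2$. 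If $\delta=0$, then $p=y$ and $\operatorname{d}(x_j,y)<r$ directly.

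With this estimate the straight-line homotopy completes the proof, and you can drop the pushforward homotopy and the offset fallback. Without it, the proposal is a plan rather than a proof.
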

\begin{rem}
    From the proof of Theorem 4.6 \cite{adams2019metric} we can also see that for $0<r<r'<\operatorname{r}_1 \left(\mathcal{M} \right)$ the inclusion $\iota_m:\mathrm{VR}^m \left(\mathcal{M};r \right)\xhookrightarrow[\hspace{0.5cm}]{}\mathrm{VR}^m \left(\mathcal{M};r' \right)$ is a homotopy equivalence.
\end{rem}
We use the results above to verify a Hausmann-type theorem on Vietoris-Rips complexes of $C^{1,1}$ submanifolds with positive reach.
\begin{lemma}\label{lemma:Hausmanneq}
    Let $\mathcal{M}$ be a compact $C^{1,1}$, $d$-manifold  with positive reach $\operatorname{r}_1 \left(\mathcal{M} \right)>0$, embedded in $\mathbb{R}^n$. For $0<r<r'<\operatorname{r}_1 \left(\mathcal{M} \right)$, $\mathrm{VR} \left(\mathcal{M};r \right)\simeq\mathrm{VR} \left(\mathcal{M};r' \right)\simeq\mathcal{M}$ and the inclusion $\iota:\mathrm{VR} \left(\mathcal{M};r \right)\xhookrightarrow[\hspace{0.5cm}]{}\mathrm{VR} \left(\mathcal{M};r' \right)$ is a homotopy equivalence.
    \begin{proof}
        Note that for the cover $\mathcal{U}_r=\{\mathrm{V}\mid\mathrm{V}\subset_{\text{open}} \mathcal{M}\text{\; and\;}\operatorname{diam} \left(\mathrm{V} \right)<r\}$, $\mathcal{V}^m(\mathcal{U}_r)=\mathrm{VR}^m \left(\mathcal{M};r \right)$ and $\mathcal{V}(\mathcal{U}_r)=\mathrm{VR} \left(\mathcal{M};r \right)$. From Theorem \ref{theorem:gillespie} we get, $\operatorname{id}:\mathrm{VR} \left(\mathcal{M};r \right)\rightarrow\mathrm{VR}^m \left(\mathcal{M};r \right)$ is a weak homotopy equivalence. From Theorem ~\ref{theorem:mirth}, we know that $\mathrm{VR}^m \left(\mathcal{M};r \right)\simeq\mathcal{M}$. As $\mathcal{M}$ is a compact $C^{1,1}$ submanifold, it admits a triangulation. Therefore, it has the homotopy type of a CW-complex. Thus, the space $\mathrm{VR}^m \left(\mathcal{M};r \right)$ has the homotopy type of a CW-complex. As $\mathrm{VR} \left(\mathcal{M};r \right)$ is the geometric realization of a simplicial complex, from Whitehead's theorem, $\operatorname{id}:\mathrm{VR} \left(\mathcal{M};r \right)\to\mathrm{VR}^m \left(\mathcal{M};r \right)$ is a homotopy equivalence and thus, $\mathrm{VR} \left(\mathcal{M};r \right)\simeq\mathcal{M}$. Note that the following diagram commutes,
        \begin{center}
     \begin{tikzcd}      
    \mathrm{VR}^m \left(\mathcal{M};r \right)\arrow[r,"\iota_m",hook]&\mathrm{VR}^m \left(\mathcal{M};r' \right)\\
    \mathrm{VR} \left(\mathcal{M};r \right)\arrow[r,"\iota",hook]\arrow[u,"\operatorname{id}_1"]&\mathrm{VR} \left(\mathcal{M};r' \right)\arrow[u,"\operatorname{id}_2"]
        \end{tikzcd}
\end{center}
where $\operatorname{id}_1$ and $\operatorname{id}_2$ are the natural identity maps between the underlying sets of the Vietoris-Rips complex and the metric thickening. As $\iota_m$, $\operatorname{id}_1$ and $\operatorname{id}_2$ are homotopy equivalences let $g_m:\mathrm{VR}^m \left(\mathcal{M};r' \right)\to\mathrm{VR}^m \left(\mathcal{M};r \right)$, $g_1:\mathrm{VR}^m \left(\mathcal{M};r \right)\to\mathrm{VR} \left(\mathcal{M};r \right)$ and $g_2:\mathrm{VR}^m \left(\mathcal{M};r' \right)\to\mathrm{VR} \left(\mathcal{M};r' \right)$ be their homotopy inverses. Denote by $\mathrm{I}_\mathrm{X}$ the identity map for any space $\mathrm{X}$. Then,
$$
     \left(g_1\circ g_m\circ\operatorname{id}_2 \right)\circ\iota=g_1\circ g_m\circ \left(\operatorname{id}_2\circ\iota \right)= g_1\circ g_m\circ\iota_m\circ\operatorname{id}_1$$
as, $g_m\circ\iota_m\simeq \mathrm{I}_{\mathrm{VR}^m \left(\mathcal{M};r \right)}$, we get  $$ \left(g_1\circ g_m\circ\operatorname{id}_2 \right)\circ\iota=g_1\circ g_m\circ\iota_m\circ\operatorname{id}_1\simeq g_1\circ\operatorname{id}_1\simeq\mathrm{I}_{\mathrm{VR} \left(\mathcal{M};r \right)}$$ Note that, $$\iota\circ g_1=\mathrm{I}_{\mathrm{VR} \left(\mathcal{M};r' \right)}\circ\iota\circ g_1\simeq g_2\circ\operatorname{id}_2\circ\iota\circ g_1=g_2\circ\iota_m\circ\operatorname{id}_1\circ g_1\simeq g_2\circ\iota_m$$ Hence, $$\iota\circ \left(g_1\circ g_m\circ\operatorname{id}_2 \right)\simeq g_2\circ\iota_m\circ g_m\circ\operatorname{id}_2$$
As, $\iota_m\circ g_m\simeq \mathrm{I}_{\mathrm{VR}^m \left(\mathcal{M};r' \right)}$ and $g_2\circ\operatorname{id}_2\simeq \mathrm{I}_{\mathrm{VR} \left(\mathcal{M};r' \right)}$ we get $\iota\circ \left(g_1\circ g_m\circ\operatorname{id}_2 \right)\simeq\mathrm{I}_{\mathrm{VR} \left(\mathcal{M};r' \right)}$. Thus, the inclusion $\iota:\mathrm{VR} \left(\mathcal{M};r \right)\xhookrightarrow[\hspace{0.5cm}]{}\mathrm{VR} \left(\mathcal{M};r' \right)$ is a homotopy equivalence.

    \end{proof}
\end{lemma}

\subsection{Bounds for two subsets}

We now combine the previous results into a lower bound for the Gromov-Hausdorff distance between two subsets of $\mathcal{M}$, using the reach-based projection estimate of Lemma \ref{lemma:estimate}.

The next two lemmas, from \cite{adams2025hausdorffvsgromovhausdorffdistances}, produce simplicial maps between Vietoris-Rips complexes from a Gromov-Hausdorff and a Hausdorff bound respectively.

    \begin{lemma}(Lemma 4.1 \cite{adams2025hausdorffvsgromovhausdorffdistances})\label{lemma:lemvrc}
Let $\mathrm{A}$ and $\mathrm{B}$ be metric spaces and let $r>2  \operatorname{d}_{\mathrm{GH}} \left(\mathrm{A}, \mathrm{B} \right)$. Then for any $\epsilon>0$ and $\eta\geq0$ , there exist simplicial maps:
$$
\mathrm{VR} \left(\mathrm{B} ; \epsilon \right) \xrightarrow[\hspace{0.75cm}]{\bar{h}} \mathrm{VR} \left(\mathrm{A} ; r+\epsilon \right) \xhookrightarrow[\hspace{0.75cm}]{\iota_\eta}\mathrm{VR} \left(\mathrm{A} ; \eta+r+\epsilon \right)\xrightarrow[\hspace{0.75cm}]{\bar{g}} \mathrm{VR} \left(\mathrm{B}  ; 2 r+\eta+\epsilon \right)
$$
such that the composition $\bar{g} \circ \bar{h}$ is contiguous to the inclusion $\mathrm{VR} \left(\mathrm{B} ; \epsilon \right) \hookrightarrow \mathrm{VR} \left(\mathrm{B} ; 2r+\eta+\epsilon \right)$.
\end{lemma}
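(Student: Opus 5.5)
Since $r>2\operatorname{d}_{\mathrm{GH}}(\mathrm{A},\mathrm{B})$, Proposition \ref{prop:ghcorr} gives a correspondence $\mathrm{C}\subset\mathrm{A}\times\mathrm{B}$ with $\operatorname{dis}(\mathrm{C})<r$. From $\mathrm{C}$ I will choose vertex maps $h:\mathrm{B}\to\mathrm{A}$ and $g:\mathrm{A}\to\mathrm{B}$ with $(h(b),b)\in\mathrm{C}$ and $(a,g(a))\in\mathrm{C}$. Then $\operatorname{dis}(h)<r$, $\operatorname{dis}(g)<r$, and the co-distortion of $(h,g)$ is $<r$, as in the discussion after Proposition \ref{prop:ghmaps}. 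The inclusion $\iota_\eta$ is the identity on vertices, and it is simplicial because Vietoris-Rips complexes are nested in the scale. So the whole argument reduces to checking how simplex diameters transform.

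\textbf{$\bar h$ is simplicial.} Let $\sigma\in\mathrm{VR}(\mathrm{B};\epsilon)$. For any $b,b'\in\sigma$ we have
$$\operatorname{d}(h(b),h(b'))<\operatorname{d}(b,b')+r<\epsilon+r.$$
Hence $\operatorname{diam}(h(\sigma))<r+\epsilon$, since $\sigma$ is finite and the supremum is attained. So $h(\sigma)\in\mathrm{VR}(\mathrm{A};r+\epsilon)$.

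\textbf{$\bar g$ is simplicial.} Let $\tau\in\mathrm{VR}(\mathrm{A};\eta+r+\epsilon)$. By the same estimate, $\operatorname{diam}(g(\tau))<\eta+2r+\epsilon$.

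\textbf{Contiguity.} This is the only step needing care. Take $\sigma\in\mathrm{VR}(\mathrm{B};\epsilon)$. I must show that $\sigma\cup g(h(\sigma))$ has diameter $<2r+\eta+\epsilon$. There are three kinds of pairs.
\begin{enumerate}
\item Pairs inside $\sigma$ have distance $<\epsilon$.
\item Pairs inside $g(h(\sigma))$: we have $\operatorname{diam}(h(\sigma))<r+\epsilon\le r+\eta+\epsilon$, and applying $g$ adds less than $r$, so these distances are $<2r+\eta+\epsilon$.
\item Mixed pairs $b'\in\sigma$ and $g(h(b))$ with $b\in\sigma$: both $(h(b'),b')$ and $(h(b),g(h(b)))$ lie in $\mathrm{C}$. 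Therefore
$$\operatorname{d}(b',g(h(b)))<\operatorname{d}(h(b'),h(b))+r<2r+\epsilon.$$
\end{enumerate}
In every case the distance is below $2r+\eta+\epsilon$. Thus $\bar g\circ\iota_\eta\circ\bar h$ is contiguous to the inclusion $\mathrm{VR}(\mathrm{B};\epsilon)\hookrightarrow\mathrm{VR}(\mathrm{B};2r+\eta+\epsilon)$.

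The main obstacle is the mixed-pair bound. It relies on applying the distortion of $\mathrm{C}$ to the pair $(h(b'),b')$, $(h(b),g(h(b)))$ rather than composing two distortion estimates. Composing the estimates would give only $3r+\epsilon$, while using the single correspondence gives $2r+\epsilon$.
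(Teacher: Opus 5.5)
Your proof is correct and is essentially the standard argument behind the cited Lemma 4.1; the paper itself only cites it. You take vertex maps from a correspondence of distortion $<r$, check diameters for $\bar h$ and $\bar g$, and get contiguity from the distortion bound on the pairs $(h(b'),b')$, $(h(b),g(h(b)))$. The mixed-pair step is easier than you suggest, since $\operatorname{d}(b',g(h(b)))\le\operatorname{d}(b',b)+\operatorname{d}(b,g(h(b)))<\epsilon+r$ already suffices. Also, Proposition \ref{prop:ghcorr} is stated only for compact spaces, whereas $\mathrm{A},\mathrm{B}$ here are arbitrary and need not even be closed in Theorem \ref{theorem:main2set}. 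You should instead cite the general fact $\operatorname{d}_{\mathrm{GH}}(\mathrm{A},\mathrm{B})=\frac12\inf_{\mathrm{C}}\operatorname{dis}(\mathrm{C})$, which holds for all metric spaces and is all your argument uses.
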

\begin{lemma}(Lemma 4.1 \cite{adams2025hausdorffvsgromovhausdorffdistances})\label{lemma:lemvrh}
Let $\mathrm{Z}$ be a metric space and let $\mathrm{B} \subset \mathrm{Z}$ satisfy $\delta>2 \operatorname{d}_{\mathrm{H}} \left(\mathrm{B}, \mathrm{Z} \right)$. Then for any $\epsilon>0$ and $\nu\geq0$, there exist simplicial maps:
$$
\mathrm{VR} \left(\mathrm{Z} ; \epsilon \right) \xrightarrow[\hspace{0.75cm}]{\bar{f}} \mathrm{VR} \left(\mathrm{B} ; \delta+\epsilon \right) \xrightarrow[\hspace{0.75cm}]{\iota_\nu} \mathrm{VR} \left(\mathrm{B} ; \delta+\nu+\epsilon \right) \xhookrightarrow[\hspace{0.75cm}]{\iota_2} \mathrm{VR} \left(\mathrm{Z} ; \delta+\nu+\epsilon \right)
$$
such that the composition $\iota_2\circ\iota_\nu \circ \bar{f}$ is contiguous to the inclusion $\iota_\mathrm{Z}:\operatorname{VR} \left(\mathrm{Z} ; \epsilon \right) \hookrightarrow \operatorname{VR} \left(\mathrm{Z} ; \delta+\nu+\epsilon \right)$. Here $\iota_2$ is the simplicial map induced by the vertex map given by the inclusion $\mathrm{B}\xhookrightarrow[\hspace{0.5cm}]{}\mathrm{Z}$.
\end{lemma}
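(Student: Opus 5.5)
The plan is to build $\bar f$ from a nearest-point-type selection $f:\mathrm{Z}\to\mathrm{B}$ and then check contiguity directly at the level of vertex sets. Since $\delta>2\operatorname{d}_{\mathrm{H}}(\mathrm{B},\mathrm{Z})$, for every $z\in\mathrm{Z}$ we can choose $f(z)\in\mathrm{B}$ with $\operatorname{d}_\mathrm{Z}(z,f(z))<\delta/2$; this uses only that $\mathrm{B}\subset\mathrm{Z}$ is $\delta/2$-dense. We take $\bar f$ to be the vertex map $f$.

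\emph{$\bar f$ is simplicial.} Let $\sigma=[z_0,\dots,z_k]$ be a simplex of $\mathrm{VR}(\mathrm{Z};\epsilon)$, so $\operatorname{d}_\mathrm{Z}(z_i,z_j)<\epsilon$ for all $i,j$. By the triangle inequality,
$$\operatorname{d}_\mathrm{Z}(f(z_i),f(z_j))\leq \operatorname{d}_\mathrm{Z}(f(z_i),z_i)+\operatorname{d}_\mathrm{Z}(z_i,z_j)+\operatorname{d}_\mathrm{Z}(z_j,f(z_j))<\delta+\epsilon.$$
Since $\mathrm{B}$ carries the restricted metric, $f(\sigma)$ has diameter less than $\delta+\epsilon$ in $\mathrm{B}$. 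Hence $\bar f(\sigma)\in\mathrm{VR}(\mathrm{B};\delta+\epsilon)$.

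\emph{The other two maps.} The map $\iota_\nu$ is the inclusion of Vietoris-Rips complexes at increasing scale, which is simplicial because $\delta+\epsilon\leq\delta+\nu+\epsilon$. The map $\iota_2$ is simplicial because diameters are computed with the same metric in $\mathrm{B}$ and in $\mathrm{Z}$.

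\emph{Contiguity with $\iota_\mathrm{Z}$.} Let $\sigma$ be as above and set $\tau=\sigma\cup f(\sigma)\subset\mathrm{Z}$. We bound every pairwise distance in $\tau$:
\begin{align*}
\operatorname{d}(z_i,z_j)&<\epsilon,\\
\operatorname{d}(f(z_i),f(z_j))&<\delta+\epsilon,\\
\operatorname{d}(z_i,f(z_j))&\leq\operatorname{d}(z_i,z_j)+\operatorname{d}(z_j,f(z_j))<\epsilon+\delta/2.
\end{align*}
All of these are less than $\delta+\nu+\epsilon$, since $\nu\geq0$. So $\tau\in\mathrm{VR}(\mathrm{Z};\delta+\nu+\epsilon)$, which means $\iota_2\circ\iota_\nu\circ\bar f$ and $\iota_\mathrm{Z}$ are contiguous.

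The main obstacle is only bookkeeping. One point to watch is the existence of $f$ when $\operatorname{d}_{\mathrm{H}}$ is an infimum; this is exactly why we use the strict inequality $\delta/2>\operatorname{d}_{\mathrm{H}}$, which guarantees a choice with strict inequality $\operatorname{d}_\mathrm{Z}(z,f(z))<\delta/2$. The other point is to keep all inequalities strict so that they match the open-scale convention $\operatorname{diam}<r$ used for $\mathrm{VR}$. Even $\nu=0$ works, because the cross terms are already below $\delta+\epsilon$.
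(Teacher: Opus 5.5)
Your proof is correct and is essentially the standard argument behind the cited Lemma 4.1, which this paper quotes without proof. You choose $f:\mathrm{Z}\to\mathrm{B}$ with $\operatorname{d}_\mathrm{Z}(z,f(z))<\delta/2$ (possible because $\sup_{z}\operatorname{d}_\mathrm{Z}(z,\mathrm{B})\leq\operatorname{d}_\mathrm{H}(\mathrm{B},\mathrm{Z})<\delta/2$), and the triangle inequality with strict bounds then gives both the simpliciality of $\bar f$ and the contiguity of $\iota_2\circ\iota_\nu\circ\bar f$ with $\iota_\mathrm{Z}$; the parameter $\nu\geq0$ only enlarges the target scale.
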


Using the estimate of Lemma \ref{lemma:estimate}, we interleave the Vietoris-Rips and intrinsic \v{C}ech complexes of a subset at a scale controlled by the ratio of the scale to the reach.

\begin{lemma}\label{lemma:interleaving}
    Let $\mathcal{M}\subset\mathbb{R}^n$ be a compact submanifold with positive reach $\operatorname{r}_1(\mathcal{M})>0$ and $\mathrm{A}\subset\mathcal{M}$ be a subset. For $0<r\leq\alpha\operatorname{r}_1(\mathcal{M})$, where $\alpha\in(0,1]$ there exists a simplicial inclusion 
    $\iota_\alpha:\mathrm{VR}(\mathrm{A};r)\xhookrightarrow[\hspace{0.5cm}]{}\check{\mathrm{C}}_\mathcal{M}\left(\mathrm{A};\sqrt{\frac{2}{1+\sqrt{1-\alpha^2}}}\lambda_n r\right)$.

\begin{proof}
    Take a simplex $\sigma=[z_1,\dots,z_k]\in\mathrm{VR}(\mathrm{A};r)$ then $\operatorname{d}(z_i,z_j)<r$ for all $1\leq i,j\leq k$. By Jung's theorem, there exists a point $z\in \mathbb{R}^n$ such that $\operatorname{d}(z,z_i)<\lambda_nr$ for each $1\leq i\leq k$. As $\lambda_nr<\operatorname{r}_1$ from Lemma \ref{lemma:estimate} we get,$$\operatorname{d}(\pi_\mathcal{M}(z),z_i)<\sqrt{\frac{2\operatorname{r}_1(\mathcal{M})}{\operatorname{r}_1(\mathcal{M})+\sqrt{\operatorname{r}_1(\mathcal{M})^2-\operatorname{d}(z,z_i)^2}}}\operatorname{d}(z,z_i)$$
    As $\sqrt{\frac{2}{1+\sqrt{1-x^2}}}$ increases with $x$ and $\operatorname{d}(z,z_i)<\lambda_nr\leq\lambda_n\alpha \operatorname{r}_1<\alpha \operatorname{r}_1$, we  get $$\operatorname{d}(\pi_\mathcal{M}(z),z_i)<\sqrt{\frac{2\operatorname{r}_1}{\operatorname{r}_1+\sqrt{\operatorname{r}_1^2-\alpha^2 \operatorname{r}_1^2}}}\operatorname{d}(z,z_i)\leq \sqrt{\frac{2}{1+\sqrt{1-\alpha^2}}}\lambda_nr$$ Thus, the simplex $\sigma$ lies inside $\mathbb{B}(\pi_\mathcal{M}(z),\sqrt{\frac{2}{1+\sqrt{1-\alpha^2}}}\lambda_nr)$ therefore, $\sigma \in \check{\mathrm{C}}_\mathcal{M}\left(\mathrm{A};\sqrt{\frac{2}{1+\sqrt{1-\alpha^2}}}\lambda_n r\right)$
\end{proof}
\begin{rem}\label{remark:inmain}
    Note that as we relax the condition on $r$ in the above Lemma, that is, as $\alpha \to 1$  we get $\sqrt{\frac{2}{1+\sqrt{1-\alpha^2}}}\lambda_nr\to \sqrt2\lambda_nr$. $\lambda_n$ increases with $n$ and $\lambda_n<\frac{1}{\sqrt2}$ so  $\sqrt{\frac{2}{1+\sqrt{1-\alpha^2}}}\lambda_nr<r$. Also, as $1+\sqrt{1-\alpha^2}\leq2$ we have $\sqrt{\frac{2}{1+\sqrt{1-\alpha^2}}}\geq1$ and, $\lambda_n$ being increasing in $n$, $\lambda_n\geq\lambda_1=\frac12$ for every $n\geq1$. Hence, $\frac{1}{2}\leq\sqrt{\frac{2}{1+\sqrt{1-\alpha^2}}}\lambda_n<1$.
\end{rem}
\end{lemma}

Combining the interleaving with the maps above and the Hausmann-type equivalence of Lemma \ref{lemma:Hausmanneq}, we obtain the main bound.

\begin{thm}\label{theorem:main2set}
    Let $\mathcal{M}\subset \mathbb{R}^n$ be a connected, closed $d$-dimensional $C^{1,1}$ Euclidean submanifold with reach $\operatorname{r}_1=\operatorname{r}_1 \left(\mathcal{M} \right)>0$. For  $\alpha$ such that $\alpha\in(0,1]$, let $c_\alpha^n = \sqrt{\frac{2}{1+\sqrt{1-\alpha^2}}}\lambda_n$ and $c=\operatorname{min}\left(\alpha,\frac{1}{1+2c_\alpha^n}\right)$. Then for any subsets $\mathrm{A}, \mathrm{B} \subset \mathcal{M}$, 
    $$
\operatorname{d}_{\mathrm{GH}} \left(\mathrm{A},\mathrm{B} \right) \geq \min \left\{\frac{1}{ 2c_\alpha^n} \operatorname{d}_{\mathrm{H}} \left(\mathrm{A},\mathcal{M} \right)- \operatorname{d}_{\mathrm{H}} \left(\mathrm{B},\mathcal{M} \right),\, \frac{c}{2}\operatorname{r}_1(\mathcal{M})-\operatorname{d}_{\mathrm{H}} \left(\mathrm{B},\mathcal{M}\right)\right\}
$$
 Alternatively, for subsets $\mathrm{A}, \mathrm{B} \subset \mathcal{M}$ such that $\operatorname{d}_\mathrm{H}\left(\mathrm{A},\mathcal{M}\right)<c\,c_\alpha^n\operatorname{r}_1(\mathcal{M})$ for some $0<\alpha<1$ we have $\operatorname{d}_{\mathrm{GH}} \left(\mathrm{A},\mathrm{B} \right) \geq \frac{1}{2c_\alpha^n} \operatorname{d}_{\mathrm{H}} \left(\mathrm{A},\mathcal{M} \right)- \operatorname{d}_{\mathrm{H}} \left(\mathrm{B},\mathcal{M} \right)$.

\begin{proof}
 Assume that  $\operatorname{d}_{\mathrm{GH}} \left(\mathrm{A}, \mathrm{B} \right)<\frac{c}{2}\operatorname{r}_1- \operatorname{d}_{\mathrm{H}} \left(\mathrm{B},\mathcal{M} \right)$. For $\delta>0$ such that $2\operatorname{d}_{\mathrm{H}} \left(\mathrm{B},\mathcal{M} \right)<\delta<c\operatorname{r}_1-2\operatorname{d}_{\mathrm{GH}} \left(\mathrm{A}, \mathrm{B} \right)$ pick $r$ such that $2\, \operatorname{d}_{\mathrm{GH}} \left(\mathrm{A}, \mathrm{B} \right)<r<c\operatorname{r}_1-\delta$. Now, take $\epsilon>0$ so that
$r+\delta+\epsilon<c\operatorname{r}_1$.  Note that from Remark \ref{remark:inmain} $2c_\alpha^n-1\geq0$ for all $\alpha\in(0,1]$ and $n\geq1$. Let $\nu=(2c_\alpha^n+1) r+(2c_\alpha^n-1)(\delta+\epsilon)>0$. Since $\delta>2\operatorname{d}_\mathrm{H}(\mathrm{B},\mathcal{M})$, from Lemma \ref{lemma:lemvrh} we have the sequence of maps,
$$
\mathrm{VR} \left(\mathcal{M} ; \epsilon \right) \xrightarrow[\hspace{0.75cm}]{\bar{f}} \mathrm{VR} \left(\mathrm{B} ; \delta+\epsilon \right) \xrightarrow[\hspace{0.75cm}]{\iota_\nu} \mathrm{VR} \left(\mathrm{B} ; r+2c_\alpha^n(r+\delta+\epsilon) \right) \xhookrightarrow[\hspace{0.75cm}]{\iota_{2}} \mathrm{VR} \left(\mathcal{M} ; r+2c_\alpha^n(r+\delta+\epsilon)\right)
$$ 
such that $\iota_2\circ \iota_\nu \circ \bar{f}\simeq\iota_\mathcal{M}$ where $\iota_\mathcal{M}$ is the inclusion $\mathrm{VR} \left(\mathcal{M} ; \epsilon \right) \xhookrightarrow[\hspace{0.5cm}]{\iota_\mathcal{M}}\mathrm{VR} \left(\mathcal{M} ;r+2c_\alpha^n \left(r+\delta+\epsilon \right) \right)$. As $r>2\operatorname{d}_{\mathrm{GH}}\left(\mathrm{A},\mathrm{B}\right)$, take $\eta=(2c_\alpha^n-1) \left(r+\delta+\epsilon \right)\geq 0$, from Lemma \ref{lemma:lemvrc} we also have the simplicial maps $\bar{h}$ and $\bar{g}$,
$$
\mathrm{VR} \left(\mathrm{B} ;\delta + \epsilon \right) \xrightarrow[\hspace{0.6cm}]{\bar{h}} \mathrm{VR} \left(\mathrm{A} ; r+\delta + \epsilon \right) \xhookrightarrow[\hspace{0.6cm}]{\iota_\eta}\mathrm{VR} \left(\mathrm{A} ; 2c_\alpha^n \left(r+\delta+\epsilon \right)\right)\xrightarrow[\hspace{0.6cm}]{\bar{g}} \mathrm{VR} \left(\mathrm{B}  ; r+ 2c_\alpha^n \left(r+\delta+\epsilon \right) \right)
$$
such that $\bar{g}\circ\iota_\eta\circ\bar{h}\simeq\iota_\nu$ as $\iota_\nu$ is the inclusion $\mathrm{VR} \left(\mathrm{B} ; \delta+\epsilon \right) \xrightarrow[\hspace{0.75cm}]{\iota_\nu} \mathrm{VR} \left(\mathrm{B} ; r+2c_\alpha^n(r+\delta+\epsilon)\right)$. As $r+\delta+\epsilon<c\operatorname{r}_1\leq\alpha \operatorname{r}_1$ from Lemma \ref{lemma:interleaving} we also have a simplicial inclusion $\iota_\alpha:\mathrm{VR} \left(\mathrm{A};r+\delta+\epsilon\right) \xhookrightarrow[\hspace{0.5cm}]{}\check{\mathrm{C}}_\mathcal{M} \left(\mathrm{A} ;c_\alpha^n \left(r+\delta+\epsilon \right) \right)$. Thus, combining the sequences of simplicial maps above we have the following diagram,

\begin{center}
     \begin{tikzcd}[column sep=0.8em]
         & \check{\mathrm{C}}_\mathcal{M} \left(\mathrm{A} ;c_\alpha^n \left(r+\delta+\epsilon \right) \right)\arrow[rd,"\iota_1"]&\\
     \mathrm{VR} \left(\mathrm{A};r+\delta+\epsilon \right)\arrow[rr,dashed,hook,"\iota_\eta"]\arrow[ru,"\iota_\alpha"]&& \mathrm{VR} \left(\mathrm{A};2c_\alpha^n\left(r+\delta+\epsilon \right) \right)\arrow[d,"\bar{g}"]\\
      \mathrm{VR} \left(\mathrm{B};\delta+\epsilon \right)\arrow[rr,dashed,hook,"\iota_\nu"]\arrow[u,"\bar{h}"]&&\mathrm{VR} \left(\mathrm{B};r+2c_\alpha^n \left(r+\delta+\epsilon \right) \right)\arrow[d,"\iota_2"]\\
      \mathrm{VR} \left(\mathcal{M} ; \epsilon \right)\arrow[rr,dashed,hook,"\iota_\mathcal{M}"] \arrow[u,"\bar{f}"]&&\mathrm{VR} \left(\mathcal{M} ;r+2c_\alpha^n \left(r+\delta+\epsilon \right) \right)\\
        \end{tikzcd}
\end{center}
where $\iota_1$ is the inclusion given by the triangle inequality. As $\bar{g}\circ\iota_\eta\circ\bar{h}\simeq\iota_\nu$, $\iota_2\circ \iota_\nu \circ \bar{f}\simeq\iota_\mathcal{M}$ and $\iota_1\circ\iota_\alpha=\iota_\eta$ we have,
$$\iota_2\circ\bar{g}\circ(\iota_1\circ\iota_\alpha)\circ\bar{h}\circ\bar{f}=\iota_2\circ(\bar{g}\circ\iota_\eta\circ\bar{h})\circ\bar{f}\simeq\iota_2\circ\iota_\nu\circ\bar{f}\simeq\iota_\mathcal{M}$$
Also, as
$ r+2c_\alpha^n(r+\delta+\epsilon)<(2c_\alpha^n+1)(r+\delta+\epsilon)<(2c_\alpha^n+1)c\operatorname{r}_1\leq \operatorname{r}_1$
from Lemma \ref{lemma:Hausmanneq} we get $\mathrm{VR} \left(\mathcal{M} ; \epsilon \right)\simeq\mathrm{VR} \left(\mathcal{M} ;r+2c_\alpha^n \left(r+\delta+\epsilon \right) \right)\simeq\mathcal{M}$. Hence, the map induced by the inclusion on homology $(\iota_\mathcal{M})_*$ maps the fundamental class $[\mathcal{M}]$ non-trivially. If $c_\alpha^n \left(r+\delta+\epsilon \right)<\operatorname{d}_\mathrm{H}
(\mathrm{A},\mathcal{M})$ from Lemma \ref{lemma:obstruct} we have 
$$\mathrm{H}_d(\check{\mathrm{C}}_\mathcal{M} \left(\mathrm{A} ;c_\alpha^n \left(r+\delta+\epsilon \right) \right);\mathbb{Z}_2)\cong\mathrm{H}_d\left(\bigcup\limits_{x\in\mathrm{A}}\mathbb{B}(x,c_\alpha^n \left(r+\delta+\epsilon\right))\cap\mathcal{M};\mathbb{Z}_2\right)=0 $$ This would imply that $(\iota_\eta)_*=(\iota_1\circ\iota_\alpha)_*=0$ on the $d$-homology groups. Thus, $(\iota_\mathcal{M})_*([\mathcal{M}])=(\iota_2\circ\bar{g}\circ(\iota_1\circ\iota_\alpha)\circ\bar{h}\circ\bar{f})_*([\mathcal{M}])=(\iota_2\circ\bar{g})_*\circ(\iota_\eta)_*\circ(\bar{h}\circ\bar{f})_*([\mathcal{M}])=0$, which is a contradiction. 
Hence, we get, $c_\alpha^n \left(r+\delta+\epsilon \right)\geq\operatorname{d}_\mathrm{H}
(\mathrm{A},\mathcal{M})$. This is true for all small enough $\epsilon>0$, $\delta>0$ such that $2\operatorname{d}_{\mathrm{H}} \left(\mathrm{B},\mathcal{M} \right)<\delta<c\operatorname{r}_1-2\operatorname{d}_{\mathrm{GH}} \left(\mathrm{A}, \mathrm{B} \right)$ and $r>0$ such that $2\, \operatorname{d}_{\mathrm{GH}} \left(\mathrm{A}, \mathrm{B} \right)<r<c\operatorname{r}_1-\delta$. Thus we have $\operatorname{d}_{\mathrm{GH}} \left(\mathrm{A},\mathrm{B} \right) \geq \frac{1}{ 2c_\alpha^n} \operatorname{d}_{\mathrm{H}} \left(\mathrm{A},\mathcal{M} \right)- \operatorname{d}_{\mathrm{H}} \left(\mathrm{B},\mathcal{M} \right)$.
\end{proof}
\end{thm}

Taking $\mathrm{B}=\mathcal{M}$ in the theorem, so that $\operatorname{d}_\mathrm{H}(\mathrm{B},\mathcal{M})=0$, gives
$$\operatorname{d}_\mathrm{GH}(\mathrm{A},\mathcal{M})\geq\min\left\{\frac{1}{2c_\alpha^n}\operatorname{d}_\mathrm{H}(\mathrm{A},\mathcal{M}),\,\frac{c}{2}\operatorname{r}_1(\mathcal{M})\right\}.$$
Since $c_\alpha^n=\sqrt{\frac{2}{1+\sqrt{1-\alpha^2}}}\lambda_n<1$ by Remark \ref{remark:inmain}, the coefficient $\frac{1}{2c_\alpha^n}$ exceeds $\frac12$, so this improves the factor $\frac12$ of Theorem \ref{thm:firstbound}.

\begin{rem}
The coefficient $\frac{1}{2c_\alpha^n}$ decreases with $\alpha$, while the range $\operatorname{d}_\mathrm{H}(\mathrm{A},\mathcal{M})<c\,c_\alpha^n\operatorname{r}_1(\mathcal{M})$ on which the linear bound $\frac{1}{2c_\alpha^n}\operatorname{d}_\mathrm{H}(\mathrm{A},\mathcal{M})$ holds widens as $\alpha$ increases. Hence the bound is sharpest when $\mathrm{A}$ is Hausdorff-close to $\mathcal{M}$: a small $\operatorname{d}_\mathrm{H}(\mathrm{A},\mathcal{M})$ permits a small $\alpha$ and thus a larger coefficient. Figure \ref{fig:twosubsetbound} plots the resulting lower bound for $\operatorname{d}_\mathrm{GH}(\mathrm{A},\mathcal{M})$ against $\operatorname{d}_\mathrm{H}(\mathrm{A},\mathcal{M})$ for $\alpha\in\{0.25,0.5,0.75,1\}$, taking $\mathrm{B}=\mathcal{M}$, $n=2$ and $\operatorname{r}_1(\mathcal{M})=1$. In the linear region each curve lies above the line of slope $\frac12$, beyond which it saturates at $\frac{c}{2}\operatorname{r}_1(\mathcal{M})$.
\end{rem}

\begin{figure}[ht]
\centering
\begin{tikzpicture}
\begin{axis}[
    xlabel={$\operatorname{d}_\mathrm{H}(\mathrm{A},\mathcal{M})$},
    ylabel={lower bound for $\operatorname{d}_\mathrm{GH}(\mathrm{A},\mathcal{M})$},
    domain=0:0.4,
    samples=200,
    legend pos=north west,
    legend cell align={left},
    axis lines=left,
    axis line style={-{Stealth[length=2.5mm]}},
    tick style={draw=none},
    width=0.8\textwidth,
    height=0.55\textwidth,
    no markers,
    xmin=0, ymin=0,
    xtick={0,0.1,0.2,0.3,0.4},
    ytick={0,0.05,0.1,0.15,0.2},
    xticklabel style={/pgf/number format/fixed,
                      /pgf/number format/precision=2,
                      /pgf/number format/fixed zerofill,
                      font=\small},
    yticklabel style={/pgf/number format/fixed,
                      /pgf/number format/precision=2,
                      /pgf/number format/fixed zerofill,
                      font=\small},
    scaled ticks=false,
]
\addplot[blue,thick]{min(0.8590*x,0.1250)};
\addlegendentry{$\alpha=0.25$}
\addplot[red,thick]{min(0.8365*x,0.2277)};
\addlegendentry{$\alpha=0.5$}
\addplot[green!60!black,thick]{min(0.7893*x,0.2206)};
\addlegendentry{$\alpha=0.75$}
\addplot[orange,thick]{min(0.6124*x,0.1899)};
\addlegendentry{$\alpha=1$}
\addplot[black,dashed]{0.5*x};
\addlegendentry{slope $\frac12$}
\end{axis}
\end{tikzpicture}
\caption{Lower bound on $\operatorname{d}_\mathrm{GH}(\mathrm{A},\mathcal{M})$ as a function of $\operatorname{d}_\mathrm{H}(\mathrm{A},\mathcal{M})$ from Theorem \ref{theorem:main2set} with $\mathrm{B}=\mathcal{M}$, for $n=2$ and $\operatorname{r}_1(\mathcal{M})=1$. Each curve rises with slope $\frac{1}{2c_\alpha^n}>\frac12$ until it saturates at $\frac{c}{2}\operatorname{r}_1(\mathcal{M})$, the dashed line has slope $\frac12$.}
\label{fig:twosubsetbound}
\end{figure}

\section{Using local conditions and ambient \v{C}ech complexes}\label{sec:ambient}
In this section we use interleavings between the Vietoris-Rips complexes and ambient \v{C}ech complexes to improve on our results. If we have a simplex in a Vietoris-Rips complex at some scale, then we can expect to find a smaller circumscribing ball of the simplex if the center is unrestricted as in the case of ambient \v{C}ech complexes versus if it is restricted in the case of intrinsic \v{C}ech complexes. In order to get a contradiction as before we would want the ambient \v{C}ech complexes at a scale below a certain threshold to have trivial $d$-homology. This cannot be guaranteed even if the projection of the ambient cover does not cover the manifold. Thus we look at inclusions from the ambient \v{C}ech complexes to intrinsic \v{C}ech complexes and their induced homomorphisms.

\begin{Def}
   Let $\mathcal{M}\subset\mathbb{R}^n$ be a Euclidean submanifold with positive reach $\operatorname{r}_1>0$ and let $\pi_\mathcal{M}:\mathcal{M}^{\operatorname{r}_1}\to\mathcal M$ be the nearest point projection map to $\mathcal{M}$. Define $\psi_\mathcal{M}:\mathcal{M}\times(0,\operatorname{r}_1)\to\mathbb{R}^{\geq0}$ as $\psi_\mathcal{M}(x,r)=\sup\limits_{y\in\mathbb{B}(x,r)}\operatorname{d}(x,\pi_\mathcal{M}(y))$. For any subset $\mathrm{A}$ define, $\Psi_\mathrm{A}=\sup\limits_{0<r<h_\mathrm{A}}\sup\limits_{x\in \mathrm{A}}\frac{\psi_\mathcal{M}(x,r)}{r}$, where $h_\mathrm{A}=\min\left\{\operatorname{d}_\mathrm{H}(\mathrm{A},\mathcal{M}),\operatorname{r}_1\right\}$. Note that from Lemma \ref{lemma:estimate} $\psi_\mathcal{M}(x,r)\leq\sqrt2\,r$, thus the supremum exists.
\end{Def}
In \cite{kim2019homotopy} the authors show that the nerves of the covers formed by intersecting balls of varying radii with $\mathcal{M}$, that is, the intrinsic \v{C}ech complexes of such balls, are homotopy equivalent to $\mathcal{M}$ provided their union covers $\mathcal{M}$ and their radii are small. This can be seen as a consequence of Lemma \ref{lemma:contractible} and is recorded in Lemma \ref{lemma:mapzero}.
Since we want to use our interleaving to compare the scale of the \v{C}ech complex with the Hausdorff distance, we construct an interleaving in which the ambient \v{C}ech complex uses ambient balls of the same radius and the intrinsic \v{C}ech complex balls of different radii. This lets us formulate a sufficient condition under which, if the scale radius of the ambient balls is small compared to a function of the Hausdorff distance, the inclusion induces a trivial homomorphism on the top homology.

\begin{lemma}\label{lemma:ambtriv}
    Let $\mathcal{M}\subset\mathbb{R}^n$ be a Euclidean submanifold  with positive reach $\operatorname{r}_1>0$. Consider a subset $\mathrm{A}\subset\mathcal{M}$ with $\operatorname{d}_{\mathrm{GH}}(\mathrm{A},\mathcal{M})>0$, let $0<r<\frac{1}{\sqrt2}\operatorname{r}_1(\mathcal{M})$ and write $r_\mathrm{A}=\left\{\psi_\mathcal{M}(x,r)\right\}_{x\in\mathrm{A}}$ for the family of radii indexed by the points of $\mathrm{A}$. Then we have the inclusions
    $$\check{\mathrm{C}} \left(\mathrm{A};r \right)\xhookrightarrow[\hspace{1cm}]{\iota_{a}}\check{\mathrm{C}}_\mathcal{M} \left(\mathrm{A};r_\mathrm{A} \right) \xhookrightarrow[\hspace{1cm}]{\iota_b}\mathrm{VR} \left(\mathrm{A};2\sqrt2r \right)$$
    Let the composition be, $\iota_1=\iota_b\circ\iota_a:\check{\mathrm{C}} \left(\mathrm{A};r \right)\xhookrightarrow[\hspace{0.5cm}]{}\mathrm{VR} \left(\mathrm{A};2\sqrt2r \right)$. If $r<\frac{1}{\Psi_\mathrm{A}}\operatorname{d}_\mathrm{H}(\mathrm{A},\mathcal{M})$ then the homomorphism induced by $\iota_1$ is trivial on the $d$-th homology. More precisely, if $\sup\limits_{x\in\mathrm{A}}\psi_\mathcal{M}(x,r)<\operatorname{d}_\mathrm{H}(\mathrm{A},\mathcal{M})$ then the homomorphism induced by $\iota_1$ is trivial on the $d$-th homology. As $\psi_\mathcal{M}(x,r)\leq\Psi_\mathrm{A}r$ for every $x\in\mathrm{A}$, the condition $r<\frac{1}{\Psi_\mathrm{A}}\operatorname{d}_\mathrm{H}(\mathrm{A},\mathcal{M})$ suffices.

\begin{proof}
    For a simplex $\sigma=[z_1,\dots,z_k]\in\check{\mathrm{C}} \left(\mathrm{A};r \right)$ then $\bigcap\limits_{i=1}^k\mathbb{B}(z_i,r)\neq\emptyset$ therefore for any $z\in\bigcap\limits_{i=1}^k\mathbb{B}(z_i,r)$ we have $\operatorname{d}(\pi_\mathcal{M}(z),z_i)\leq\psi_\mathcal{M}(z_i,r)$ thus, $\pi_\mathcal{M}(z)\in \bigcap\limits_{i=1}^k\mathbb{B}(z_i,\psi_\mathcal{M}(z_i,r))\cap\mathcal{M} $ that is $\sigma \in \check{\mathrm{C}}_\mathcal{M} \left(\mathrm{A};r_\mathrm{A} \right)$. As $\psi_\mathcal{M}(x,r)\leq\sqrt{2}\,r$ for every $x\in\mathrm{A}$, using the triangle inequality we have the inclusion $\iota_b$. Now if $r<\frac{1}{\Psi_\mathrm{A}}\operatorname{d}_\mathrm{H}(\mathrm{A},\mathcal{M})$, then there exists $y\in \mathcal{M}$ with $\operatorname{d}_\mathrm{A}(y)>\Psi_\mathrm{A}r\geq \psi_\mathcal{M}(x,r)$ for every $x\in\mathrm{A}$. Thus, the cover $\mathcal{U}^\mathcal{M}_{r_\mathrm{A}}=\left\{\mathbb{B}(x,\psi_\mathcal{M}(x,r))\cap\mathcal{M}\right\}_{x\in\mathrm{A}}$ does not cover $\mathcal{M}$. As $\psi_\mathcal{M}(x,r)\leq\sqrt2\,r<\operatorname{r}_1(\mathcal{M})$ for every $x\in\mathrm{A}$, Lemma \ref{lemma:mapzero} applies and gives $\mathrm{H}_d\left(\check{\mathrm{C}}_\mathcal{M} \left(\mathrm{A};r_\mathrm{A} \right);\mathbb{Z}_2\right)=0$ and $(\iota_b)_*=0$ on $\mathrm{H}_d\left(\check{\mathrm{C}}_\mathcal{M} \left(\mathrm{A};r_\mathrm{A} \right);\mathbb{Z}_2\right)$. Thus, $(\iota_1)_*=(\iota_b)_*\circ(\iota_a)_*:\mathrm{H}_d\left(\check{\mathrm{C}}\left(\mathrm{A};r\right);\mathbb{Z}_2\right)\to \mathrm{H}_d\left(\mathrm{VR}\left(\mathrm{A};2\sqrt2r\right);\mathbb{Z}_2\right)$ is trivial.
\end{proof}
\end{lemma}
In specific cases, the function $\psi_\mathcal{M}$ can be found explicitly. For example in the case when $\mathcal{M}=\mathbb{S}^n$ we have $\psi_{\mathbb{S}^n}(x,r) = \sqrt{2-2\sqrt{1-r^2}}$. We start by proving some bounds on the scales of the ambient \v{C}ech complex to have an interleaving with Vietoris-Rips complex at appropriate scale. We use the properties of the nearest point projection map as defined earlier and the nearest point projection map to the tangent space at a particular point on the manifold.

\noindent The following lemma is due to Federer (Theorem 4.8 (7) \cite{federer1959curvature}) and is stated as in \cite{boissonnat2019reach}.
\begin{lemma} \label{lemma:alphabeta} (Lemma 11 \cite{boissonnat2019reach})
    Let $\mathcal{M}$ be a $C^{1,1}$ Euclidean submanifold with reach $\operatorname{r}_1 \left(\mathcal{M} \right)>0$. Let $p, q \in \mathcal{M} \subset \mathbb{R}^{n}$ such that $|p-q|<\operatorname{r}_1 \left(\mathcal{M} \right)$. We have

$$
\operatorname{d} \left(q, T_{p} \mathcal{M} \right) \leq  \left(1-\sqrt{1- \left(\frac{\operatorname{d} \left(\pi_{T_p\mathcal{M}} \left(q \right),p \right)}{\operatorname{r}_1 \left(\mathcal{M} \right)} \right)^{2}} \right)\operatorname{r}_1 \left(\mathcal{M} \right)
$$
\end{lemma}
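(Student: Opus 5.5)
The inequality is Federer's tangent-space estimate, and I would prove it directly from the defining property of reach. Set $\operatorname{r}_1=\operatorname{r}_1(\mathcal{M})$ and $v=q-p$. Decompose $v=t+\nu$ with $t\in T_p\mathcal{M}$ and $\nu\in T_p\mathcal{M}^\perp$. Then $\operatorname{d}(\pi_{T_p\mathcal{M}}(q),p)=\lVert t\rVert$ and $\operatorname{d}(q,T_p\mathcal{M})=\lVert\nu\rVert$.

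\textbf{Key input (empty-ball property).} For every unit normal $u\in T_p\mathcal{M}^\perp$ and every $s<\operatorname{r}_1$, the open ball $\mathbb{B}(p+su,s)$ contains no point of $\mathcal{M}$.

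I would justify this as follows. Points $p+su$ with $s<\operatorname{r}_1$ lie in the offset where $\pi_\mathcal{M}$ is well defined. Because $\mathcal{M}$ is $C^{1,1}$, $p$ is a critical point of the distance from $p+su$ to $\mathcal{M}$. Using Federer's characterization of reach (the normal segment of length below the reach projects back to $p$), we get $\pi_\mathcal{M}(p+su)=p$. Hence $\operatorname{d}_\mathcal{M}(p+su)=s$, and letting $s\uparrow \operatorname{r}_1$ gives the claim with radius $\operatorname{r}_1$ as well, in the closed-ball sense.

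\textbf{Algebraic step.} Apply the key input with $u=\nu/\lVert\nu\rVert$; if $\nu=0$ there is nothing to prove. Since $q\notin\mathbb{B}(p+\operatorname{r}_1u,\operatorname{r}_1)$,
$$\lVert v-\operatorname{r}_1u\rVert^2\geq \operatorname{r}_1^2,$$
which expands to $\lVert t\rVert^2+\lVert\nu\rVert^2\geq 2\operatorname{r}_1\lVert\nu\rVert$. Write $a=\lVert t\rVert$ and $b=\lVert\nu\rVert$. The inequality becomes $b^2-2\operatorname{r}_1b+a^2\geq0$, so
$$b\leq \operatorname{r}_1-\sqrt{\operatorname{r}_1^2-a^2}\quad\text{or}\quad b\geq \operatorname{r}_1+\sqrt{\operatorname{r}_1^2-a^2}.$$
Here $a\leq\lVert v\rVert<\operatorname{r}_1$, so the square root is real.

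\textbf{Excluding the large root (main obstacle).} The second alternative gives $b\geq \operatorname{r}_1$, which contradicts $b\leq\lVert p-q\rVert<\operatorname{r}_1$. So the first alternative holds, and it is exactly the stated bound $\bigl(1-\sqrt{1-(a/\operatorname{r}_1)^2}\bigr)\operatorname{r}_1$. The hypothesis $|p-q|<\operatorname{r}_1$ is used precisely here.

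The genuinely delicate point is therefore the empty-ball property. It requires that every normal direction at $p$ realizes $p$ as the nearest point up to distance $\operatorname{r}_1$. I would either cite Federer's Theorem 4.8 (12), which states that $\pi_\mathcal{M}(p+s u)=p$ for $0\le s<\operatorname{r}_1$ and $u$ a unit normal, or argue it by continuity along the segment. The set of $s\in[0,\operatorname{r}_1)$ with $\pi_\mathcal{M}(p+su)=p$ is closed by continuity of $\pi_\mathcal{M}$. It is open by uniqueness of nearest points together with the first-order condition that $p+su-\pi_\mathcal{M}(p+su)$ is normal at $\pi_\mathcal{M}(p+su)$. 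It contains $s$ near $0$ by the local $C^{1,1}$ graph structure. Connectedness then gives the whole interval.
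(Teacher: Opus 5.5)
Your argument is correct and is the standard derivation behind the paper's citation. The paper gives no proof of its own and defers to Lemma 11 of Boissonnat et al.\ and Federer's Theorem 4.8(7). The inequality $\lVert t\rVert^2+\lVert\nu\rVert^2\geq 2\operatorname{r}_1\lVert\nu\rVert$ that you extract from the empty tangent ball is exactly Federer's estimate $\langle q-p,u\rangle\leq\lVert q-p\rVert^2/(2\operatorname{r}_1)$ for unit normals $u$ at $p$. Discarding the large root via $\lVert\nu\rVert\leq\lVert p-q\rVert<\operatorname{r}_1$ is precisely where the hypothesis enters.

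For the empty-ball property, rely on the Federer citation rather than your connectedness sketch, because the sketch does not prove openness. Uniqueness of nearest points together with normality of $y-\pi_\mathcal{M}(y)$ at $\pi_\mathcal{M}(y)$ does not by itself force $\pi_\mathcal{M}(p+su)=p$ for $s$ slightly beyond a known good value. You need an extra input, such as the fact (also in Federer's Theorem 4.8) that $\operatorname{d}_\mathcal{M}$ is $C^1$ on $\{0<\operatorname{d}_\mathcal{M}<\operatorname{r}_1\}$ with unit gradient $(y-\pi_\mathcal{M}(y))/\operatorname{d}_\mathcal{M}(y)$. Its gradient lines are then straight normal rays along which $\pi_\mathcal{M}$ is constant.
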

To bound a simplex in $\mathbb{R}^n$ with a fixed diameter $D$, we can always find a circumscribing sphere of radius $\lambda_n D$ where $\lambda_n$ is the constant $\lambda_n=\sqrt{\frac{n}{2(n+1)}}$ obtained from Jung's theorem. If the simplex was on some $d$-dimensional hyperplane for $d<n$ instead then we could find a smaller circumscribing sphere of radius $\lambda_dr<\lambda_nr$ as $\lambda_n$ increases with $n$. If the simplex has vertices on a manifold $\mathcal{M}\subset\mathbb{R}^n$ of dimension $d$, then we can pick the tangent space at any of the vertices and project the rest of the points to this hyperplane orthogonally. This will allow us to construct a smaller circumscribing sphere for the projected simplex than if we were to use Jung's theorem in the ambient space $\mathbb{R}^n$ (note that the diameter of the projected simplex is less than or equal to the original as orthogonal projection to a plane is 1-Lipschitz). We can then see how big the sphere has to be to contain the original simplex using Lemma \ref{lemma:alphabeta} and see if this bound is better than the one obtained by using Jung's theorem in $\mathbb{R}^n$. 

\begin{lemma}\label{lemma:gamma}
    Let $\mathcal{M}\subset\mathbb{R}^n$ be a $d$-dimensional  $C^{1,1}$ Euclidean submanifold of reach $\operatorname{r}_1 \left(\mathcal{M} \right)>0$ and $\mathrm{A}\subset \mathcal{M}$. With $\lambda_n = \sqrt{\frac{n}{2 \left(n+1 \right)}}$ define $\alpha_{d,n},\,\beta_{d,n}$ and $\gamma$ as follows,
    \begin{align*}
        & \alpha_{d,n} = \sqrt{\lambda_n^2-\lambda_d^2}\\
        & \beta_{d,n} = \frac{2\alpha_{d,n}}{\alpha_{d,n}^2+1}\\
        & \gamma \left(x \right) = \sqrt{\lambda_d^2+\frac{ \left(1-\sqrt{1-x^2} \right)^2}{x^2}}
    \end{align*}
    Then we have the inclusion $\iota_r:\mathrm{VR} \left(\mathrm{A};r \right)\xhookrightarrow[\hspace{0.5cm}]{}\check{\mathrm{C}} \left(\mathrm{A};\gamma \left(\tau \right) r \right)$ and $\gamma(\tau)<\lambda_n$ for any $r$ and $\tau$ such that $\frac{r}{\operatorname{r}_1 \left(\mathcal{M} \right)}\leq\tau<\beta_{d,n}$.
    \begin{proof}
        Take a simplex $\sigma = [z_1,\dots,z_m]\in \mathrm{VR} \left(\mathrm{A};r \right)$ and consider the $d$-dimensional tangent plane $T_{z_1}\mathcal{M}$. Let $\pi_{1}$ be the orthogonal projection map onto the plane $T_{z_1}\mathcal{M}$. As the orthogonal projection is 1-Lipschitz,
        we get $\operatorname{d} \left(y_i,y_j \right)\leq \operatorname{d} \left(z_i,z_j \right)\leq \operatorname{diam} \left(\sigma \right)<r$ for $i,j\in\{1,\dots, m\}$, where $y_i = \pi_1 \left(z_i \right)$. Now the set $\pi_1 \left(\sigma \right)=\{y_1=z_1,y_2\dots,y_m\}$ has diameter $\operatorname{diam} \left(\pi_1 \left(\sigma \right) \right)\leq \operatorname{diam} \left(\sigma \right)<r$. Using Jung's theorem in the $d$-dimensional affine subspace $T_{z_1}\mathcal{M}$ there exists a $d$-dimensional ball $\mathbb{B}_{T_{z_1}\mathcal{M}} \left(c,r_0 \right)$ circumscribing $\pi_1 \left(\sigma \right)$ with radius $r_0< \lambda_d\,\operatorname{diam}(\sigma)<\lambda_d\,r$. As $\operatorname{d} \left(z_1,z_i \right)<\operatorname{r}_1 \left(\mathcal{M} \right)$, we use Lemma ~\ref{lemma:alphabeta} to get 
        $$\operatorname{d} \left(y_i,z_i \right)\leq  \left(1-\sqrt{1- \left(\frac{\operatorname{d}\left(y_i,z_1 \right)}{\operatorname{r}_1 \left(\mathcal{M} \right)} \right)^2} \right)\operatorname{r}_1 \left(\mathcal{M} \right) $$ As, we have  $z_i-y_i$ is orthogonal to $T_{z_1}\mathcal{M}$  we have $$\operatorname{d} \left(c,z_i \right)=\sqrt{\operatorname{d} \left(c,y_i \right)^2+\operatorname{d} \left(y_i,z_i \right)^2}<\sqrt{r_0^2+ \left(1-\sqrt{1- \left(\frac{\operatorname{d}\left(y_i,z_1 \right)}{\operatorname{r}_1 \left(\mathcal{M} \right)} \right)^2} \right)^2 \left(\operatorname{r}_1 \left(\mathcal{M} \right) \right)^2}$$ As, $\operatorname{d} \left(y_i,z_1 \right)<\operatorname{d} \left(z_i,z_1 \right)<r\leq\operatorname{r}_1 \left(\mathcal{M} \right)\,\tau$ and $r_0<\lambda_d\,r$
        $$\operatorname{d} \left(c,z_i \right)<\sqrt{ \left(\lambda_d\,r \right)^2+ \left(1-\sqrt{1- \left(\frac{r}{\operatorname{r}_1 \left(\mathcal{M} \right)} \right)^2} \right)^2 \left(\operatorname{r}_1 \left(\mathcal{M} \right) \right)^2}=r\,\gamma \left(\frac{r}{\operatorname{r}_1 \left(\mathcal{M} \right)} \right)$$
        Now $\gamma \left(x \right)$ is increasing thus, $\operatorname{d} \left(c,z_i \right)<\gamma \left(\tau \right)r$ and $\sigma \subset\mathbb{B} \left(c,\gamma \left(\tau \right)\,r \right)$. Therefore, $\sigma\in\check{\mathrm{C}} \left(\mathrm{A};\gamma \left(\tau \right) r \right)$. Now, $$ \gamma \left(\beta_{d,n} \right)=\sqrt{\lambda_d^2+\frac{ \left(1-\sqrt{1- \left(\beta_{d,n} \right)^2} \right)^2}{\beta_{d,n}^2}} =\sqrt{\lambda_d^2+\alpha_{d,n}^2}=\lambda_n$$
        so, $\gamma \left(\tau \right)<\gamma \left(\beta_{d,n} \right)=\lambda_n$ for $\tau<\beta_{d,n}$.
    \end{proof} 
\end{lemma}
Thus, we see that the bound obtained by projecting the simplex to the tangent space and then using Jung's theorem on the tangent space and Lemma \ref{lemma:alphabeta} to obtain a circumradius improves on the circumradius obtained by using Jung's theorem in the ambient space when the simplices are small enough.
\begin{thm}\label{theorem:ambientmain}
    Let $\mathcal{M}\subset \mathbb{R}^n$ be a connected closed $d$-dimensional $C^{1,1}$ Euclidean submanifold with reach $\operatorname{r}_1 \left(\mathcal{M} \right)>0$. For  $\tau$ such that $0<\tau<\beta_{d,n}$, let $c=\operatorname{min}\left(\tau,\frac{1}{2\sqrt2\lambda_n+1}\right)$. Then for subsets $\mathrm{A}, \mathrm{B} \subset \mathcal{M}$,
\begin{align*}
    &1.)\;\operatorname{d}_{\mathrm{GH}} \left(\mathrm{A},\mathcal{M} \right) \geq \min \left\{\frac{1}{2 \Psi_\mathrm{A}\,\gamma \left(\tau \right)} \operatorname{d}_{\mathrm{H}} \left(\mathrm{A},\mathcal{M} \right), \frac{c}{2}\operatorname{r}_1(\mathcal{M})\right\}\\
    &2.)\;\operatorname{d}_{\mathrm{GH}} \left(\mathrm{A},\mathrm{B} \right) \geq \min \left\{\frac{1}{2\Psi_\mathrm{A}\gamma \left(\tau \right)} \operatorname{d}_{\mathrm{H}} \left(\mathrm{A},\mathcal{M} \right)- \operatorname{d}_{\mathrm{H}} \left(\mathrm{B},\mathcal{M} \right), \frac{c}{2}\operatorname{r}_1(\mathcal{M})-\operatorname{d}_{\mathrm{H}} \left(\mathrm{B},\mathcal{M}\right)\right\}\\
\end{align*}

\begin{proof}
   Let $\operatorname{r}_1 \left(\mathcal{M} \right)=\operatorname{r}_1$ and assume  $\operatorname{d}_{\mathrm{GH}} \left(\mathrm{A}, \mathrm{B} \right)<\frac{c}{2}\operatorname{r}_1- \operatorname{d}_{\mathrm{H}} \left(\mathrm{B},\mathcal{M} \right)$. For $2\operatorname{d}_{\mathrm{H}} \left(\mathrm{B},\mathcal{M} \right)<\delta<c\operatorname{r}_1-2\operatorname{d}_{\mathrm{GH}} \left(\mathrm{A}, \mathrm{B} \right)$ pick $r$ such that $2\, \operatorname{d}_{\mathrm{GH}} \left(\mathrm{A}, \mathrm{B} \right)<r<c\operatorname{r}_1-\delta$. Now, take $\epsilon>0$ so that
$r+\delta+\epsilon<c\operatorname{r}_1$. As $r+\delta+\epsilon<c\operatorname{r}_1\leq\tau\operatorname{r}_1$ from Lemma ~\ref{lemma:gamma}, ~\ref{lemma:lemvrh} and ~\ref{lemma:lemvrc} we get the following sequence of simplicial maps, 

\begin{center}
     \begin{tikzcd}[column sep=0.8em]
         & \check{\mathrm{C}} \left(\mathrm{A} ;\gamma \left(\tau \right)\left(r+\delta+\epsilon \right) \right)\arrow[rd,"\iota_1"]&\\
     \mathrm{VR} \left(\mathrm{A};r+\delta+\epsilon \right)\arrow[rr,dashed,hook,"\iota_0"]\arrow[ru,"\iota_r"]&& \mathrm{VR} \left(\mathrm{A};2\sqrt{2}\gamma \left(\tau \right)\left(r+\delta+\epsilon \right) \right)\arrow[d,"\bar{g}"]\\
      \mathrm{VR} \left(\mathrm{B};\delta+\epsilon \right)\arrow[u,"\bar{h}"]&&\mathrm{VR} \left(\mathrm{B};r+2\sqrt2\gamma \left(\tau \right)\left(r+\delta+\epsilon \right) \right)\arrow[d,"\iota_2"]\\
      \mathrm{VR} \left(\mathcal{M} ; \epsilon \right) \arrow[u,"\bar{f}"]&&\mathrm{VR} \left(\mathcal{M} ;r+2\sqrt2\gamma \left(\tau \right)\left(r+\delta+\epsilon \right) \right)\\
        \end{tikzcd}
\end{center}
Note that as $\lambda_d<\gamma(\tau)<\lambda_n$, we have,
$$ r+2\sqrt2\gamma \left(\tau \right)(r+\delta+\epsilon)<(2\sqrt2\gamma \left(\tau \right)+1)(r+\delta+\epsilon)<(2\sqrt2\gamma \left(\tau \right)+1)c\operatorname{r}_1\leq\left(\frac{2\sqrt2\gamma \left(\tau \right)+1}{2\sqrt2\lambda_n+1}\right)\operatorname{r}_1<\operatorname{r}_1$$
From Lemma \ref{lemma:Hausmanneq} we get $\mathrm{VR} \left(\mathcal{M} ; \epsilon \right)\simeq\mathrm{VR} \left(\mathcal{M} ;r+2\sqrt2\gamma \left(\tau \right)\left(r+\delta+\epsilon \right) \right)\simeq\mathcal{M}$.
Therefore, from Lemma \ref{lemma:ambtriv} we have $\gamma \left(\tau \right)\left(r+\delta+\epsilon \right)>\frac{1}{\Psi_\mathrm{A}}\operatorname{d}_\mathrm{H} \left(\mathrm{A},\mathcal{M} \right)$ for all small enough $\epsilon>0$, all $\delta$ such that $2 \operatorname{d}_{\mathrm{H}} \left(\mathrm{B}, \mathcal{M} \right)<\delta<c\operatorname{r}_1-2\operatorname{d}_{\mathrm{GH}} \left(\mathrm{A}, \mathrm{B} \right)$ and all $r$ such that $2\, \operatorname{d}_{\mathrm{GH}} \left(\mathrm{A}, \mathrm{B} \right)<r<c\operatorname{r}_1-\delta$. Substituting we get, $2\, \operatorname{d}_{\mathrm{GH}} \left(\mathrm{A}, \mathrm{B} \right)+2 \operatorname{d}_{\mathrm{H}} \left(\mathrm{B}, \mathcal{M} \right)>\frac{1}{\Psi_\mathrm{A}\gamma \left(\tau \right)}\operatorname{d}_{\mathrm{H}} \left(\mathrm{A}, \mathcal{M} \right)$. Therefore, $\operatorname{d}_{\mathrm{GH}} \left(\mathrm{A}, \mathrm{B} \right)>\frac{1}{2\Psi_\mathrm{A}\gamma \left(\tau \right)} \operatorname{d}_{\mathrm{H}} \left(\mathrm{A},\mathcal{M} \right)- \operatorname{d}_{\mathrm{H}} \left(\mathrm{B},\mathcal{M} \right)$.
\end{proof}
\end{thm}

We note that from Lemma \ref{lemma:estimate}, as $\operatorname{d}_\mathrm{H}(\mathrm{A},\mathcal{M})\to0$ we have $\Psi_\mathrm{A}\to 1$. The constant $c_\alpha^n=\sqrt{\frac{2}{1+\sqrt{1-\alpha^2}}}\lambda_n$ appearing in Theorem \ref{theorem:main2set} is strictly larger than $\lambda_n$, whereas $\gamma(\tau)$ from Theorem \ref{theorem:ambientmain} is strictly smaller than $\lambda_n$. So, for dense enough (in the sense that $\Psi_\mathrm{A}-1$ is small) subsets, we have $\frac{1}{2\Psi_\mathrm{A}\gamma(\tau)}\geq \frac{1}{2\lambda_n}>\frac{1}{2c_\alpha^n}$.
As mentioned previously, $\psi_{\mathbb{S}^n}(x,r) = \sqrt{2-2\sqrt{1-r^2}}$, we use this and some geometric properties  of $\mathbb{S}^n$ to give an example of using ambient \v{C}ech complexes and Lemma \ref{lemma:ambtriv} to obtain lower bounds on Gromov-Hausdorff distance.
In \cite{dekster1997jung} the author extends Jung's theorem to manifolds with curvature bounded above, using this we can get an interleaving between the Vietoris-Rips and \v{C}ech complexes in the case of $\mathbb{S}^n$.
\begin{thm}\label{theorem:dekster}
  Let $\mathcal{M}$ be a compact, connected, $d$-dimensional Euclidean submanifold with sectional curvatures bounded above by $\kappa \in \mathbb{R}$. Let $\mathrm{U} \subset \mathcal{M}$ be compact such that, $\operatorname{diam} \left(\mathrm{U} \right)<\rho \left(\mathcal{M} \right)$ for $\kappa \leq 0$ and $\operatorname{diam} \left(\mathrm{U} \right)<\min \left\{\rho \left(\mathcal{M} \right), \frac{2 \pi}{3 \sqrt{\kappa}}\right\}$ if $\kappa>0$. Then the circumcenter $c \left(\mathrm{U} \right)$ exists in $\mathcal{M}$ and
$$
\operatorname{diam} \left(\mathrm{U} \right) \geq \begin{cases}\frac{2}{\sqrt{-\kappa}} \sinh ^{-1} \left(\sqrt{\frac{d+1}{2 d}} \sinh  \left(\sqrt{-\kappa}\; \operatorname{R} \left(\mathrm{U} \right) \right) \right) & \text { for } \kappa<0 \\ 2 \operatorname{R} \left(\mathrm{U} \right) \sqrt{\frac{d+1}{2 d}} & \text { for } \kappa=0 \\ \frac{2}{\sqrt{\kappa}} \sin ^{-1} \left(\sqrt{\frac{d+1}{2 d}} \sin  \left(\sqrt{\kappa} \operatorname{R} \left(\mathrm{U} \right) \right) \right) & \text { for } \kappa>0 \text { and } \operatorname{R} \left(\mathrm{U} \right) \in\left[0, \frac{\pi}{2 \sqrt{\kappa}}\right]\end{cases}
$$
where $\operatorname{R} \left(\mathrm{U} \right)$ is the circumradius and $\rho \left(\mathcal{M} \right)$ is the convexity radius of $\mathcal{M}$, that is, the supremum of all $s>0$ such that every geodesic ball in $\mathcal{M}$ of radius less than $s$ is convex. For the unit sphere $\rho \left(\mathbb{S}^{d} \right)=\frac{\pi}{2}$.
\end{thm}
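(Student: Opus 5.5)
The plan is the classical proof of Jung's theorem, transplanted to the Riemannian setting via comparison geometry. Throughout, $\operatorname{d}$, $\operatorname{diam}$ and $\operatorname{R}$ are taken in the intrinsic (geodesic) metric $\operatorname{d}_g$ of $\mathcal{M}$, as in Dekster's setting. The argument has three steps.

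\textbf{Step 1: existence of the circumcenter and a first-order condition.} Since $\operatorname{diam}(\mathrm{U})<\rho(\mathcal{M})$ (and $<\frac{2\pi}{3\sqrt\kappa}$ if $\kappa>0$), $\mathrm{U}$ lies in a geodesic ball $\mathrm{W}$ that is strongly convex and, when $\kappa>0$, has radius below $\frac{\pi}{2\sqrt\kappa}$. On $\mathrm{W}$ the Rauch comparison makes each $x\mapsto \operatorname{d}(x,u)$ convex, and $\frac{\pi}{2\sqrt\kappa}$-bounded balls in a space with curvature $\leq\kappa$ are CAT$(\kappa)$. Hence
$$F(x)=\max_{u\in\mathrm{U}}\operatorname{d}(x,u)$$
is continuous and strictly convex on $\mathrm{W}$, and it has a unique minimizer $c=c(\mathrm{U})$, with $F(c)=\operatorname{R}(\mathrm{U})=:R$. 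Let $\mathrm{U}_R=\{u\in\mathrm{U}\mid \operatorname{d}(c,u)=R\}$, which is compact, and for $u\in\mathrm{U}_R$ let $v_u\in T_c\mathcal{M}$ be the unit initial vector of the minimal geodesic from $c$ to $u$. By the first variation formula, if $0\notin\operatorname{conv}\{v_u\}$ there is a direction $w$ with $\langle w,v_u\rangle>0$ for all $u\in\mathrm{U}_R$. Moving $c$ along $w$ would then strictly decrease $F$ (compactness handles the points near $\mathrm{U}_R$), contradicting minimality. So $0\in\operatorname{conv}\{v_u\}$. By Carathéodory in the $d$-dimensional space $T_c\mathcal{M}$ we get $v_0,\dots,v_k$ with $k\leq d$ and weights $\lambda_i\geq0$, $\sum\lambda_i=1$, such that $\sum\lambda_iv_i=0$.

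\textbf{Step 2: a large angle at $c$.} This is the Euclidean Jung computation in $T_c\mathcal{M}$. Expanding,
$$\sum_{i,j}\lambda_i\lambda_j\lVert v_i-v_j\rVert^2=2-2\Big\lVert\sum_i\lambda_iv_i\Big\rVert^2=2.$$
Since the diagonal terms vanish, $\sum_{i\neq j}\lambda_i\lambda_j=1-\sum_i\lambda_i^2\leq 1-\frac{1}{k+1}=\frac{k}{k+1}$. Hence some pair satisfies $\lVert v_i-v_j\rVert^2\geq\frac{2(k+1)}{k}\geq\frac{2(d+1)}{d}$. Writing $\theta$ for the angle between $v_i$ and $v_j$, this means $\sin(\theta/2)\geq\sqrt{\frac{d+1}{2d}}$.

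\textbf{Step 3: comparison with the model space.} Consider the geodesic triangle $c,u_i,u_j$ with two sides of length $R$ and angle $\theta$ at $c$. Curvature $\leq\kappa$ together with the CAT$(\kappa)$ property of $\mathrm{W}$ gives, by hinge comparison, that $\operatorname{d}(u_i,u_j)$ is at least the third side $D$ of the model hinge in the space form of curvature $\kappa$. The isosceles law there gives $D$ explicitly:
$$\sin\!\Big(\tfrac{\sqrt\kappa D}{2}\Big)=\sin(\tfrac\theta2)\sin(\sqrt\kappa R)\ (\kappa>0),\qquad D=2R\sin(\tfrac\theta2)\ (\kappa=0),\qquad \sinh\!\Big(\tfrac{\sqrt{-\kappa}D}{2}\Big)=\sin(\tfrac\theta2)\sinh(\sqrt{-\kappa}R)\ (\kappa<0).$$
The model $D$ is increasing in $\theta$. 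For $\kappa>0$ this monotonicity, and the validity of $\sin^{-1}$, rely on $R\leq\frac{\pi}{2\sqrt\kappa}$. Substituting the bound $\sin(\theta/2)\geq\sqrt{\frac{d+1}{2d}}$ and using $\operatorname{diam}(\mathrm{U})\geq\operatorname{d}(u_i,u_j)\geq D$ yields the three stated inequalities.

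\textbf{Main obstacle.} The hard part is Step 1 together with the applicability of comparison in Step 3. One must verify that the diameter hypotheses place $\mathrm{U}$, $c$ and all connecting minimal geodesics inside a single strongly convex CAT$(\kappa)$ ball, so that $F$ is strictly convex, the first-variation argument is legitimate, and the hinge comparison (rather than only a local Rauch estimate) applies. I would first try to use $\operatorname{diam}(\mathrm{U})<\frac{2\pi}{3\sqrt\kappa}$ to bound $R<\frac{\pi}{2\sqrt\kappa}$ directly: an a priori estimate of $R$ in terms of the diameter should follow by applying the same comparison to any point of $\mathrm{U}$ as a trial center.
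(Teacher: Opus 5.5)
The paper gives no proof of this statement. It is quoted from Dekster \cite{dekster1997jung} and used as a black box. Your outline is the standard comparison-geometry route to Jung-type theorems, and Steps 2 and 3 are correct as computations: $\sum_{i,j}\lambda_i\lambda_j\lVert v_i-v_j\rVert^2=2$, the resulting $\max_{i\neq j}\lVert v_i-v_j\rVert^2\geq\frac{2(k+1)}{k}\geq\frac{2(d+1)}{d}$, and the three isosceles formulas all check out. For $\kappa\leq0$, Step 1 is also essentially fine.

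\textbf{The gap: Step 1 for $\kappa>0$.} Nothing in the hypotheses gives a strongly convex ball $\mathrm{W}\supset\mathrm{U}$ of radius below $\frac{\pi}{2\sqrt\kappa}$, and your proposed remedy does not produce one.
\begin{itemize}
\item A trial center in $\mathrm{U}$ only yields $\operatorname{R}(\mathrm{U})\leq\operatorname{diam}(\mathrm{U})$, which may be close to $\frac{2\pi}{3\sqrt\kappa}>\frac{\pi}{2\sqrt\kappa}$.
\item The comparison bound $\frac{2}{\sqrt\kappa}\sin^{-1}\left(\sqrt{\frac{d+1}{2d}}\sin(\sqrt\kappa\operatorname{R})\right)$ is decreasing in $\operatorname{R}$ beyond $\frac{\pi}{2\sqrt\kappa}$, so it cannot exclude $\operatorname{R}(\mathrm{U})>\frac{\pi}{2\sqrt\kappa}$ either.
\end{itemize}
So your uniqueness-by-strict-convexity argument and the appeal to the CAT$(\kappa)$ property of $\mathrm{W}$ are unsupported as written. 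Even on such a $\mathrm{W}$, $x\mapsto\operatorname{d}(x,u)$ is in general convex only on $\mathbb{B}(u,\frac{\pi}{2\sqrt\kappa})$, and $F$ need not be strictly convex; uniqueness would need a midpoint estimate.

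\textbf{The repair: the inequality never needs $\mathrm{W}$.}
\begin{itemize}
\item \emph{Existence.} A minimizer $c$ of $F$ exists because $F$ is $1$-Lipschitz on the compact $\mathcal{M}$. Uniqueness is not used later.
\item \emph{First variation.} Since $\operatorname{R}\leq\operatorname{diam}(\mathrm{U})<\rho(\mathcal{M})\leq\operatorname{inj}(\mathcal{M})$, minimal geodesics from $c$ to points of $\mathrm{U}$ are unique and $\operatorname{d}(\cdot,u)$ is smooth near $c$. So your first-variation and Carath\'eodory step is legitimate.
\item \emph{Step 3.} Take $\varepsilon>0$ with $\operatorname{R}+\varepsilon<\rho(\mathcal{M})$ and, if $\kappa>0$, $\operatorname{R}+\varepsilon<\frac{\pi}{\sqrt\kappa}$; this is possible since $\operatorname{R}<\frac{2\pi}{3\sqrt\kappa}$. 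Then $\mathbb{B}(c,\operatorname{R}+\varepsilon)$ is a convex normal ball containing the minimal geodesic from $u_i$ to $u_j$. Rauch's theorem (sectional curvature $\leq\kappa$, no model conjugate points before $\frac{\pi}{\sqrt\kappa}$) shows that transplanting this ball to the model space, via the two exponential maps and a linear isometry of tangent spaces, does not increase lengths of curves. This gives $\operatorname{d}(u_i,u_j)\geq D$ with no CAT$(\kappa)$ statement at all.
\item \emph{Monotonicity.} $D$ is increasing in $\theta$ for every $\operatorname{R}<\frac{\pi}{\sqrt\kappa}$, since $\cos(\sqrt\kappa D)=\cos^2(\sqrt\kappa\operatorname{R})+\sin^2(\sqrt\kappa\operatorname{R})\cos\theta$, and the $\sin^{-1}$ is always defined. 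Your remark that these need $\operatorname{R}\leq\frac{\pi}{2\sqrt\kappa}$ is therefore inaccurate, though harmless, because the statement assumes that bound for $\kappa>0$.
\end{itemize}
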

\noindent The sectional curvature of an $n$-sphere of radius $R$ is constant everywhere and is equal to $\frac{1}{R^2}$.
\begin{lemma}\label{lemma:incv}
Let $\mathrm{A}$ be a subset of the unit $(n-1)$-sphere $\mathbb{S}^{n-1}\subset\mathbb{R}^{n}$. For $0<r<\sqrt2$, we have the inclusion, 
$\iota_r:\mathrm{VR} \left(\mathrm{A};r \right)\xhookrightarrow[\hspace{0.5cm}]{}\check{\mathrm{C}} \left(\mathrm{A};\lambda_{n-1}r \right)$, where $\lambda_{n}=\sqrt{\frac{n}{2 \left(n+1 \right)}}$.
\begin{proof}
    Consider $\sigma\in\mathrm{VR} \left(\mathrm{A};r \right)$ and $\sigma=\{z_1,z_2,\dots,z_k\}$. As $\operatorname{diam} \left(\sigma \right)<r<\sqrt2$ the diameter of the set $\sigma$ in the geodesic metric on $\mathbb{S}^{n-1}$ denoted by $\operatorname{diam}_g \left(\sigma \right)$ is less than $2\sin^{-1} \left(\frac{r}{2} \right)$. Thus using the extended Jung's Theorem there exists a geodesic ball on $\mathbb{S}^{n-1}$, $\mathbb{B}_{\mathbb{S}^{n-1}} \left(c,r_0 \right)$ containing $\sigma$ with  $\operatorname{diam}_g \left(\sigma \right)\geq 2\sin^{-1} \left(\frac{1}{2\lambda_{n-1}}\sin \left(r_0 \right) \right)$. As this geodesic ball is the spherical cap $\{z\in\mathbb{S}^{n-1} | \langle z,c\rangle>\cos{r_0}\}$, it is enclosed by the Euclidean ball of radius $r'=\sin{r_0}$ centered at $\frac{c}{\lVert c\rVert}\cos \left(r_0 \right)$. As $\operatorname{diam}_g \left(\sigma \right)<2\sin^{-1} \left(\frac{r}{2} \right)$, $$r'=\sin{r_0}<2\lambda_{n-1}\sin \left(\frac{\operatorname{diam}_g \left(\sigma \right)}{2} \right)<2\lambda_{n-1}\sin \left(\frac{2\sin^{-1} \left(\frac{r}{2} \right)}{2} \right)=\lambda_{n-1}r$$
    Thus, $\sigma\in\check{\mathrm{C}} \left(\mathrm{A};\lambda_{n-1}r \right)$.
\end{proof}
\end{lemma}
\noindent Thus we have the following for $\mathbb{S}^{n-1}$.
\begin{thm}\label{theorem:spherebound}
    Consider two subsets $\mathrm{A},\mathrm{B}\subset\mathbb{S}^{n-1}\subset\mathbb{R}^n$ with $\operatorname{d}_\mathrm{H}\left(\mathrm{A},\mathbb{S}^{n-1}\right)\leq\sqrt2$. Let \\ $c=\min\left\{\sqrt2,\frac{1}{\sqrt2\lambda_n(2\lambda_{n-1}+1)}\right\}$. Then the following hold.
\begin{align*}
&1.) \;\operatorname{d}_{\mathrm{GH}} \left(\mathrm{A},\mathbb{S}^{n-1} \right)\geq\operatorname{min}\{\frac{1}{2\lambda_{n-1}}\psi_{\mathbb{S}^{n-1}}^{-1}\left(\operatorname{d}_\mathrm{H} \left(\mathrm{A},\mathbb{S}^{n-1} \right)\right),\frac{c}{2}\}\;\\
&2.) \;\operatorname{d}_{\mathrm{GH}} \left(\mathrm{A},\mathrm{B} \right)\geq\operatorname{min}\{\frac{1}{2\lambda_{n-1}}\psi_{\mathbb{S}^{n-1}}^{-1}\left(\operatorname{d}_\mathrm{H} \left(\mathrm{A},\mathbb{S}^{n-1} \right)\right)-\operatorname{d}_{\mathrm{H}} \left(\mathrm{B},\mathbb{S}^{n-1} \right),\frac{c}{2}-\operatorname{d}_{\mathrm{H}} \left(\mathrm{B},\mathbb{S}^{n-1} \right)\}\;
\end{align*} 
\begin{proof}
     Assume, $\operatorname{d}_{\mathrm{GH}} \left(\mathrm{A}, \mathrm{B} \right)<\frac{c}{2}-\operatorname{d}_{\mathrm{H}} \left(\mathrm{B},\mathbb{S}^{n-1} \right)$. For any $2\operatorname{d}_{\mathrm{H}}\left(\mathrm{B},\mathbb{S}^{n-1}\right)<\delta<c-2\operatorname{d}_{\mathrm{GH}}\left(\mathrm{A},\mathrm{B}\right)$ fix $r$ such that $2 \operatorname{d}_{\mathrm{GH}} \left(\mathrm{A},\mathrm{B} \right)<r<c-\delta$ and take $\epsilon>0$ so that $r+\delta+\epsilon<c$
Using Lemmas \ref{lemma:lemvrc}, \ref{lemma:lemvrh} and \ref{lemma:incv}, we have the following sequence of maps,

\begin{center}
     \begin{tikzcd}[column sep=0.8em]
         & \check{\mathrm{C}} \left(\mathrm{A} ;\lambda_{n-1} \left(r+\delta+\epsilon \right) \right)\arrow[rd,"\iota_1"]&\\
     \mathrm{VR} \left(\mathrm{A};r+\delta+\epsilon \right)\arrow[rr,dashed,hook,"\iota_0"]\arrow[ru,"\iota_r"]&& \mathrm{VR} \left(\mathrm{A};2\sqrt2\lambda_{n-1} \left(r+\delta+\epsilon \right) \right)\arrow[d,"\bar{g}"]\\
      \mathrm{VR} \left(\mathrm{B};\delta+\epsilon \right)\arrow[u,"\bar{h}"]&&\mathrm{VR} \left(\mathrm{B};r+2\sqrt{2}\lambda_{n-1} \left(r+\delta+\epsilon \right) \right)\arrow[d,"\iota_2"]\\
      \mathrm{VR} \left(\mathbb{S}^{n-1} ; \epsilon \right) \arrow[u,"\bar{f}"]&&\mathrm{VR} \left(\mathbb{S}^{n-1} ;r+2\sqrt2\lambda_{n-1} \left(r+\delta+\epsilon \right) \right)\\
        \end{tikzcd}
\end{center}
Note that $r+2\sqrt2\lambda_{n-1}(r+\delta+\epsilon)<(1+2\sqrt2\lambda_{n-1})(r+\delta+\epsilon)<(1+2\sqrt2\lambda_{n-1})c\leq\frac{1}{\lambda_{n}}$. Hence, from Theorem \ref{theorem:snthe}, Theorem \ref{theorem:gillespie} and Whitehead's theorem we have,$$\mathrm{VR} \left(\mathbb{S}^{n-1} ; \epsilon \right)\simeq\mathrm{VR} \left(\mathbb{S}^{n-1} ;r+2\sqrt2\lambda_{n-1} \left(r+\delta+\epsilon \right) \right)\simeq\mathbb{S}^{n-1}$$ Note that the inclusion $\iota_1$ is as in Lemma \ref{lemma:ambtriv}. So, using Lemma \ref{lemma:ambtriv}, $\psi_{\mathbb{S}^{n-1}} \left(\lambda_{n-1}\left(r+\delta+\epsilon \right)\right)>\operatorname{d}_\mathrm{H} \left(\mathrm{A},\mathbb{S}^{n-1} \right)$ for all $\delta$ such that, $2\operatorname{d}_{\mathrm{H}}\left(\mathrm{B},\mathbb{S}^{n-1}\right)<\delta<c-2\operatorname{d}_{\mathrm{GH}}\left(\mathrm{A},\mathrm{B}\right)$ and $r$ such that, $2 \operatorname{d}_{\mathrm{GH}} \left(\mathrm{A},\mathrm{B} \right)<r<c-\delta$. Thus, we get $2\operatorname{d}_{\mathrm{H}} \left(\mathrm{B},\mathbb{S}^{n-1} \right)+2\operatorname{d}_{\mathrm{GH}} \left(\mathrm{A},\mathrm{B} \right)>\frac{1}{\lambda_{n-1}}\psi^{-1}_{\mathbb{S}^{n-1}}\left(\operatorname{d}_\mathrm{H} \left(\mathrm{A},\mathbb{S}^{n-1} \right)\right)$ ($\psi_{\mathbb{S}^{n-1}}$ is an increasing function), therefore $\operatorname{d}_{\mathrm{GH}} \left(\mathrm{A},\mathrm{B} \right)>\frac{1}{2\lambda_{n-1}}\psi_{\mathbb{S}^{n-1}}^{-1}\left(\operatorname{d}_\mathrm{H} \left(\mathrm{A},\mathbb{S}^{n-1} \right)\right)-\operatorname{d}_{\mathrm{H}} \left(\mathrm{B},\mathbb{S}^{n-1} \right)$.
\end{proof}
\end{thm}

For $\mathbb{S}^2\subset\mathbb{R}^3$ (so $n=3$, $\lambda_n=\lambda_3$, $\lambda_{n-1}=\lambda_2$ and $\operatorname{r}_1=1$) we compare the two lower bounds for $\operatorname{d}_\mathrm{GH}(\mathrm{A},\mathbb{S}^2)$: the general bound $\frac{1}{2c_\alpha^n}\operatorname{d}_\mathrm{H}(\mathrm{A},\mathbb{S}^2)$ from Theorem \ref{theorem:main2set} and the sphere-specific bound $\frac{1}{2\lambda_{n-1}}\psi^{-1}_{\mathbb{S}^2}(\operatorname{d}_\mathrm{H}(\mathrm{A},\mathbb{S}^2))$.  Figure \ref{fig:spherebound} plots the two bounds.

\begin{figure}[b]
\centering
\begin{tikzpicture}
\begin{axis}[
    xlabel={$\operatorname{d}_\mathrm{H}(\mathrm{A},\mathbb{S}^2)$},
    ylabel={lower bound for $\operatorname{d}_\mathrm{GH}(\mathrm{A},\mathbb{S}^2)$},
    domain=0:0.25,
    samples=100,
    legend pos=north west,
    legend cell align={left},
    axis lines=left,
    axis line style={-{Stealth[length=2.5mm]}},
    tick style={draw=none},
    width=0.7\textwidth,
    height=0.5\textwidth,
    no markers,
    xmin=0, ymin=0,
    xtick={0.05,0.1,0.15,0.2,0.25},
    xticklabel style={/pgf/number format/fixed,
                      /pgf/number format/precision=2,
                      /pgf/number format/fixed zerofill,
                      font=\small},
    scaled x ticks=false,
    ytick={0.05,0.1,0.15,0.2,0.25},
        yticklabel style={/pgf/number format/fixed,
                      /pgf/number format/precision=2,
                      /pgf/number format/fixed zerofill,
                      font=\small},
    scaled y ticks=false,
]
\addplot[blue,thick]{min(0.8100*x,10)};
\addlegendentry{$\tfrac{1}{2c_\alpha^n}\operatorname{d}_\mathrm{H}$, $\alpha=0.25$}
\addplot[red,thick]{min(0.7887*x,10)};
\addlegendentry{$\tfrac{1}{2c_\alpha^n}\operatorname{d}_\mathrm{H}$, $\alpha=0.5$}
\addplot[green!60!black,thick]{min(0.7442*x,10)};
\addlegendentry{$\tfrac{1}{2c_\alpha^n}\operatorname{d}_\mathrm{H}$, $\alpha=0.75$}
\addplot[orange,thick]{min(0.5774*x,10)};
\addlegendentry{$\tfrac{1}{2c_\alpha^n}\operatorname{d}_\mathrm{H}$, $\alpha=1$}
\addplot[black,very thick]{min(0.8660254*x*sqrt(1-x^2/4),10)};
\addlegendentry{$\tfrac{1}{2\lambda_{n-1}}\psi^{-1}_{\mathbb{S}^2}(\operatorname{d}_\mathrm{H})$}
\end{axis}
\end{tikzpicture}
\caption{Comparison for $\mathbb{S}^2\subset\mathbb{R}^3$ ($n=3$, $\operatorname{r}_1(\mathbb{S}^2)=1$) of the lower bounds on $\operatorname{d}_\mathrm{GH}(\mathrm{A},\mathbb{S}^2)$ as functions of $\operatorname{d}_\mathrm{H}(\mathrm{A},\mathbb{S}^2)$ for a sufficiently dense subset $\mathrm{A}$.}
\label{fig:spherebound}
\end{figure}

\section{Discussion and future work}
In the article above the submanifold is assumed to have positive reach, in recent work there are results relating the distance function of the submanifold and the geometry of the offsets for weaker restrictions on regularity of the submanifold and its distance function (\cite{Chazal2009,CHAZAL2005304}). In particular $\mu$-reach has been used as a sampling parameter to  geometric
and topological approximation problems on compact subsets \cite{Chazal2007-jz,Chazal2009}. We would next want to see if analogous lower bounds on Gromov-Hausdorff distance between subsets of Euclidean submanifold can be found for a weaker regularity condition specifically, if the submanifold has positive $\mu$-reach for $\mu<1$. In \cite{kim2019homotopy} the authors give a homotopy between a submanifold of positive $\mu$-reach and its offset, this can be seen as a homotopy between the submanifold and the geometric realization of the ambient \v{C}ech complex on the manifold. The bounds on the distortion of the nearest point projection map when the submanifold has positive reach and well defined tangent spaces help us in establishing the lower bounds and interleavings between complexes found in Section \ref{sec:GH2rips} and Section \ref{sec:ambient}. Finding such bounds in the case of positive $\mu$-reach can help in extending the lower bounds to submanifolds of positive $\mu$-reach and their subsets.

The homotopy type of the Vietoris-Rips complex is of interest in proving results as above and otherwise \cite{zaremsky2021bestvinabradydiscretemorsetheory,Adamaszek_2017,Hausmann+2016+175+188}. In the above results the homotopy between the Vietoris-Rips complex (at small scales) and the underlying manifold is established via the Vietoris-Rips metric thickening. In \cite{Virk2021} the authors prove that the Vietoris-Rips complex can be seen as the nerve of an appropriate cover. Can we find an explicit good cover in the Euclidean case  with the nerve of the cover being the Vietoris-Rips complex and is homotopic to the underlying space? For example, in case of $\mathbb{S}^1$ equipped with the Euclidean metric, the projections of the interiors of the 1-simplices of the Vietoris-Rips complex $\mathrm{VR}\left(\mathbb{S}^1;r\right)$, seen as chords, are geodesically convex (arcs of arc-length less than $\frac{2\pi}{3}$) for small enough $r$. Thus, finite intersections of such projections are contractible. Therefore the set of the projections of the interiors of the chords forms a good cover. Also the nerve of this cover is the Vietoris-Rips complex $\mathrm{VR}\left(\mathbb{S}^1;r\right)$.In general for embedded submanifolds of positive reach it can be shown that the image of a simplex with diameter smaller than the reach under the projection map is contractible. 

In \cite{adams2025lowerboundinggromovhausdorffdistance} the authors prove that the Hausdorff and Gromov-Hausdorff distance between a circle and its subset seen as metric spaces with the geodesic metric are equal, given the Gromov-Hausdorff distance is smaller than $\frac\pi3$. They also give a counterexample to prove that this bound in terms of the Gromov-Hausdorff closeness is tight. We would like to see if the bound in Corollary \ref{cor:circeq} above is tight in this sense. That is, for any $\epsilon>0$, can we find a subset $\mathrm{A}\subset \mathbb{S}^1$ where $c_0<\psi(d_\mathrm{H}(\mathrm{A},\mathbb{S}^1))<c_0+\epsilon$ with $\operatorname{d}_{\mathrm{GH}}(\mathrm{A},\mathbb{S}^1)<\frac{1}{2}\psi(\operatorname{d}_{\mathrm{H}}(\mathrm{A},\mathbb{S}^1))$.

Another question of interest is that of proving similar bounds for manifolds with a boundary as mentioned in \cite[Section 9 Question 8]{adams2025hausdorffvsgromovhausdorffdistances}. One thing to consider is that the $n$-dimensional homology of an $n$-manifold with a non-empty boundary is trivial. The double of a manifold with boundary is the quotient space of two copies of $\mathcal{M}$ glued along its boundary $\partial \mathcal{M}$. It is defined as $\tilde{\mathcal{M}} = (\mathcal{M}\times\{0,1\})/\sim$ where $(x,0)\sim(x,1)\,\forall x\in\partial\mathcal{M}$. The double of a smooth manifold is compact and can be endowed with a smooth structure and thus a fundamental class can be defined for the double. In \cite{adamaszek_et_al:LIPIcs.SoCG.2018.3} the authors prove results relating the Vietoris-Rips complex of metric gluings with the quotient space obtained by gluing Vietoris-Rips complexes of the metric spaces along the Vietoris-Rips complex of their intersection. Although gluing may not preserve geometric properties like the reach, can we extend the inequalities to manifolds with boundary using such results. 

\bibliographystyle{plain}
\bibliography{references_HvGH_v1_final}
\end{document}